\numberwithin{equation}{section}
\newtheorem{Theorem}[equation]{Theorem}
\newtheorem{Proposition}[equation]{Proposition}
\newtheorem{Lemma}[equation]{Lemma}
\newtheorem{Corollary}[equation]{Corollary}
\newtheorem{Question}[equation]{Question}
\theoremstyle{definition}
\newtheorem{Remark}[equation]{Remark}
\newtheorem{eg}[equation]{Example}
\newtheorem{Definition}[equation]{Definition}
\newcommand{\bA}{\mathbf{A}}
\newcommand{\bB}{\mathbf{B}}
\newcommand{\bG}{\mathbf{G}}
\newcommand{\cH}{\mathcal{H}}
\newcommand{\cO}{\mathcal{O}}
\newcommand{\cP}{\mathcal{P}}
\newcommand{\cT}{\mathcal{T}}
\newcommand{\bU}{\mathbf{U}}
\newcommand{\cW}{\mathcal{W}}
\newcommand{\bX}{\mathbf{X}}
\newcommand{\bY}{\mathbf{Y}}
\newcommand{\fg}{\mathfrak{g}}
\newcommand{\fu}{\mathfrak{u}}
\renewcommand{\phi}{\varphi}
\renewcommand{\emptyset}{\varnothing}
\renewcommand{\tilde}[1]{\widetilde{#1}}
\def\Ddots{\mathinner{\mkern1mu\raise\p@
\vbox{\kern7\p@\hbox{.}}\mkern2mu
\raise4\p@\hbox{.}\mkern2mu\raise7\p@\hbox{.}\mkern1mu}}
\renewcommand{\hom}{\operatorname{Hom}}
\newcommand{\Waff}{W_{\text{aff}}}
\newcommand{\ZZ}{\mathbb{Z}}
\newcommand{\NN}{\mathbb{N}}
\newcommand{\CC}{\mathbb{C}}
\newcommand{\inv}[1]{\Delta(#1,-)}
\newcommand{\Inv}[1]{\Delta(#1,-)}
\newcommand{\GG}{\mathbb{G}}
\newcommand{\twomat}[4]{\begin{bmatrix}
        #1 & #2  \\[1em]
        #3 & #4 \\
     \end{bmatrix}}
\begin{document}

\title{On Iwahori-Hecke algebras for $p$-adic loop groups: double coset basis and Bruhat order }
\author{Dinakar Muthiah}
\maketitle

\begin{abstract}
We study the $p$-adic loop group Iwahori-Hecke algebra $\cH(G^+,I)$ constructed by Braverman, Kazhdan, and Patnaik in \cite{bkp} and give positive answers to two of their conjectures. First, we algebraically develop the ``double coset basis" of $\cH(G^+,I)$ given by indicator functions of double cosets. We prove a generalization of the Iwahori-Matsumoto formula, and as a consequence, we prove that the structure coefficients of the double coset basis are polynomials in the order of the residue field. The basis is naturally indexed by a semi-group $\cW_\cT$ on which Braverman, Kazhdan, and Patnaik define a preorder. Their preorder is a natural  generalization of the Bruhat order on affine Weyl groups, and they conjecture that the preorder is a partial order. We define another order on $\cW_\cT$ which is graded by a length function and is manifestly a partial order. We prove the two definitions coincide, which implies a positive answer to their conjecture. Interestingly, the length function seems to naturally take values in $\ZZ \oplus \ZZ \varepsilon$ where $\varepsilon$ is ``infinitesimally'' small.
\end{abstract}

\section{Introduction}

Let $\bG$ be a Kac-Moody group equipped with a choice of positive Borel subgroup $\bB$. Let $F$ be a local field, $\cO$ be its ring of integers, $\pi$ be a choice of uniformizer, and $k$ be the residue field of $\cO$. Let $G = \bG(F)$, let $K = G(\cO)$, and let the Iwahori subgroup $I$ be those elements of $K$ that lie in $\bB(k)$ modulo the uniformizer.

When $\bG$ is finite-dimensional, the Iwahori-Hecke algebra $\cH(G,I)$ is defined to be the set of complex valued functions on $G$ that are $I$-biinvariant and supported on finitely many $I$ double cosets. The multiplication in $\cH(G,I)$ is given by convolution. The $I$ double cosets of $G$ are indexed by the affine Weyl group. The ``double coset basis'' of $\cH(G,I)$ is given by indicator functions of $I$ double cosets, and the structure coefficients of this basis are given by the Iwahori-Matsumoto presentation of the algebra. Alternatively, Bernstein gave another presentation of $\cH(G,I)$ by making use of the principal series representation of $G$. In this presentation, $\cH(G,I)$ is generated by a finite Hecke algebra and the group algebra of the coweight lattice of $G$. 

In the case when $\bG$ is an untwisted affine Kac-Moody group, i.e. a ``loop group'', the definition of Iwahori-Hecke algebra due to Braverman, Kazhdan and Patnaik \cite{bkp} is more subtle. An initial issue is that the Cartan decomposition no longer holds. To handle this, let $G^+$ be the subset of $G$ where the Cartan decomposition {\it does} hold, and restrict attention to only those $I$-biinvariant functions whose support is contained in $G^+$. Then one can prove that $G^+$ is in fact a semi-group, and therefore the condition of having support in $G^+$ is preserved under convolution. Moreover, they prove that the convolution is well defined, i.e. the structure coefficients are finite. Also, the condition of being supported on finitely many cosets is preserved under convolution (i.e. one does not need to pass to a completion as one does in the spherical case \cite{bk,gr}). Let us write $\cH(G^+,I)$ for the \emph{$p$-adic loop group Iwahori-Hecke algebra} consisting of the set of complex-valued functions on $G^+$ supported on finitely many $I$ double cosets.  

Additionally, Braverman, Kazhdan, and Patnaik prove that $\cH(G^+,I)$ has a Bernstein-type presentation under which it is generated by an affine Hecke algebra and the semi-group algebra of the Tits cone of $\bG$. 
In this way, they show that $\cH(G^+,I)$ is a version of Cherednik's DAHA (see \cite{cher}). The only difference is that Cherednik's DAHA arises when one uses the coroot lattice of $G$ instead of the Tits cone in the Bernstein presentation (see Section \ref{sec:cherednik-daha}).

\subsection{The double coset basis}

There is another basis of  $\cH(G^+,I)$: the ``double coset basis'' given by indicator functions of $I$ double cosets. The $I$ double cosets contained in $G^+$ are naturally indexed by the semi-group $\cW_\cT$, which is the semi-direct product of the Weyl group of $G^+$ with the Tits cone. The stucture coefficients of this basis are given by the cardinalities of certain finite sets (see Theorem \ref{thm:loop-group-iwahori-hecke-algebra}) that arise from $p$-adic integration and are mysterious from an algebraic perspective. Braverman, Kazhdan, and Patnaik conjecture \cite[Section 1.2.4]{bkp} that there should be a combinatorial way to develop this basis; in particular, the structure constants of this basis should be polynomials in $q$, the order of the residue field $k$.

In this paper, we give a way to combinatorially develop the coset basis of $\cH(G^+,I)$. To do this, we prove a generalization of the Iwahori-Matsumoto relation (see Theorem \ref{thm:generalized-iwahori-matsumoto-formula} and its left-handed variation \ref{thm:generalized-iwahori-matsumoto-formula-left}) that holds in the case of loop groups.
Combining this with the algorithm developed in \cite[Section 6.2]{bkp} for writing the generators of the Bernstein presentation in terms of the coset basis we give a positive answer to the conjecture of Braverman, Kazhdan, and Patnaik:

\begin{Theorem}
The structure constants of the double coset basis of $\cH(G^+,I)$ are polynomials in $q$, the order of the residue field $k$.
\end{Theorem}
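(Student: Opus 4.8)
The plan is to transport the problem to the Bernstein presentation, where multiplication is visibly polynomial in $q$, and then to show that the change of basis between the Bernstein generators and the double coset basis is, in both directions, given by polynomials in $q$. Write $\mathbf{1}_w$ for the double coset basis element indexed by $w \in \cW_{\cT}$. The first observation is that the structure constants of the Bernstein presentation are already polynomials in $q$: by the Bernstein-type presentation of Braverman, Kazhdan, and Patnaik, $\cH(G^+,I)$ is generated by an affine Hecke algebra and the semigroup algebra $\CC[\cT]$ of the Tits cone, with a PBW-type spanning set of monomials $\theta_\lambda T_w$. The quadratic and braid relations of the affine Hecke algebra are polynomial in $q$; the semigroup algebra $\CC[\cT]$ has structure constants in $\{0,1\}$; and the Bernstein cross-relations governing how the $\theta_\lambda$ commute past the $T_{s_i}$ have coefficients in $\ZZ[q]$. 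Hence any product of two monomials $\theta_\lambda T_w$ re-expands into such monomials with coefficients in $\ZZ[q]$.

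Next I would use the generalized Iwahori-Matsumoto formula (Theorem \ref{thm:generalized-iwahori-matsumoto-formula} and its left-handed variant \ref{thm:generalized-iwahori-matsumoto-formula-left}), together with the algorithm of \cite[Section 6.2]{bkp}, to pass between the two bases with polynomial-in-$q$ coefficients. The algorithm of \cite{bkp} expresses each Bernstein generator $\theta_\lambda$ in the double coset basis, and the generalized Iwahori-Matsumoto formula controls the multiplication of a double coset element by a simple reflection; together these let me expand each monomial $\theta_\lambda T_w$ into the double coset basis. The essential point is that this expansion is triangular with respect to the length function on $\cW_{\cT}$, with unit leading coefficient, so that the inverse change of basis also has coefficients in $\ZZ[q]$.

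Finally I would combine these observations. Given $\mathbf{1}_u$ and $\mathbf{1}_w$, expand each into Bernstein monomials (coefficients in $\ZZ[q]$), multiply in the Bernstein presentation (structure constants in $\ZZ[q]$), and re-expand the result into the double coset basis (coefficients in $\ZZ[q]$). The resulting composite coefficients are exactly the structure constants of the double coset basis, and they lie in $\ZZ[q]$, as claimed.

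The hard part will be establishing the triangularity and unit-diagonal property of the change of basis in the loop setting. Because $\cW_{\cT}$ is only a semigroup and its length function appears to take values in $\ZZ \oplus \ZZ \eps$ rather than in $\ZZ$, one must verify that the generalized Iwahori-Matsumoto recursion respects a well-founded order and terminates, so that the triangular system can in fact be solved over $\ZZ[q]$; the left-handed variant of the formula is presumably needed to control the recursion when multiplying on either side. This is precisely the feature that distinguishes the loop group case from the classical Iwahori-Matsumoto theory, where the indexing set is a group with an honest integer-valued length function.
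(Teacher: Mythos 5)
Your overall strategy --- pass to the Bernstein presentation and control the change of basis in both directions --- is the same as the paper's, but the step you yourself flag as ``the hard part'' is exactly the step that is missing, and the route you propose for it (inverting a triangular system with unit diagonal) is both unestablished and unnecessary. The paper never inverts a triangular system. The point of the generalized Iwahori--Matsumoto formula (Theorem \ref{thm:generalized-iwahori-matsumoto-formula} and its left-handed version \ref{thm:generalized-iwahori-matsumoto-formula-left}) is that $T_{\pi^\mu w s_i} = T_{\pi^\mu w}\,T_i^{\pm 1}$ is an exact identity with a \emph{single} term on the right, not an identity modulo lower-order corrections; it yields the closed formula
\begin{align*}
T_{\pi^{w(\lambda)}} \;=\; T_{w^{-1}}^{-1}\,T_{\pi^\lambda}\,T_{w^{-1}}
\;=\; q^{-\langle \rho^\vee,\lambda\rangle}\,T_{w^{-1}}^{-1}\,\Theta_\lambda\,T_{w^{-1}}
\qquad (\lambda \text{ dominant}),
\end{align*}
and hence every $T_{\pi^\mu v}$ is an explicit finite product of $\Theta_\lambda$, $T_i$, and $T_i^{-1}$. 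Since $T_i^{-1}$ is a Laurent-polynomial-in-$q$ combination of $1$ and $T_i$ by the quadratic relation, this writes each double coset basis element in the Bernstein basis with coefficients in $\ZZ[q,q^{-1}]$ directly, with no appeal to triangularity, to a well-founded order on $\cW_\cT$, or to termination of a recursion. If you insist on the triangular-inversion route, you must prove that the relevant ``intervals'' in $\cW_\cT$ are finite and that the expansion really is unitriangular --- nontrivial claims given that $\cW_\cT$ is only a semigroup, is not Coxeter, and its natural length function takes values in $\ZZ\oplus\ZZ\varepsilon$; nothing in your sketch addresses this.

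A second, smaller gap: you assert coefficients in $\ZZ[q]$ throughout, but the algebraic manipulations (the quadratic relation at $v=q^{-1/2}$, the inverses $T_i^{-1}$, the normalization $\Theta_\lambda = q^{\langle\rho^\vee,\lambda\rangle}T_{\pi^\lambda}$) only give Laurent polynomials in $q$. The passage from ``Laurent polynomial in $q$'' to ``polynomial in $q$'' is a separate, final step in the paper: the structure constants are cardinalities of finite sets, hence non-negative integers for every prime power $q$, and a Laurent polynomial with this property must be an ordinary polynomial. Your argument needs this step (or an equivalent one) to reach the stated conclusion.
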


\subsection{The Bruhat order}

In the second part of this paper, we study candidates for the Bruhat order on $\cW_\cT$, which is the semi-group indexing the double coset basis of $\cH(G^+,I)$. One candidate is proposed in \cite[Section B.2]{bkp}. The authors define the notion of a \emph{double affine root}, and associated to such a root $\beta$, they define a reflection $s_\beta$.  If $w, ws_\beta \in \cW_\cT$, they declare that $w < ws_\beta$ if $w(\beta)$ is positive, and $w > ws_\beta$ otherwise. The Bruhat preorder is then defined to be the preorder generated by such inequalities. This definition generalizes a similar characterization of the Bruhat order for Weyl groups. However in the case of $\cW_\cT$, it is not at all clear that this preorder is in fact a partial order. Braverman, Kazhdan and Patnaik conjecture \cite[Section B.2]{bkp} that this preorder is a partial order.

We propose another candidate for the Bruhat order. The first ingredient is a new length function on $\cW_\cT$ whose definition (see Definition \ref{def:length-function}) is inspired by our generalized Iwahori-Matsumoto formula. We define an order generated by double affine reflections as above, but we say that $w < ws_\beta$ if the length of $ws_\beta$ is greater than the length of $w$. This order is manifestly a partial
order because it is graded by a length function. We then prove the following.

\begin{Theorem}
The two notions of Bruhat order agree. In particular, the Bruhat preorder considered by Braverman, Kazhdan, and Patnaik is in fact a partial order. This gives a positive answer to their conjecture.
\end{Theorem}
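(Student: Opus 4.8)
The plan is to show that the two orders are generated by the very same elementary moves, so that their transitive closures coincide. Both are defined as the preorder (respectively order) generated by moves of the form $w \leftrightarrow w s_\beta$, where $\beta$ ranges over double affine roots and one requires $w, w s_\beta \in \cW_\cT$; the unoriented set of moves is therefore identical, and the only difference is the rule orienting each move. For Braverman--Kazhdan--Patnaik the move points upward ($w < w s_\beta$) precisely when $w(\beta)$ is positive, whereas in our order it points upward precisely when $\ell(w s_\beta) > \ell(w)$, with $\ell$ the length function of Definition \ref{def:length-function}. Thus everything reduces to a single compatibility statement: for every $w \in \cW_\cT$ and every double affine root $\beta$ with $w s_\beta \in \cW_\cT$,
\[
    \ell(w s_\beta) > \ell(w) \quad \Longleftrightarrow \quad w(\beta) \text{ is positive.}
\]
Once this is established, each elementary Braverman--Kazhdan--Patnaik relation is an elementary relation for our order and conversely, so the generated (pre)orders are identical; since our order is graded by $\ell$ it is a genuine partial order, and hence so is theirs.

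To prove this equivalence I would unpack it into two assertions: that the two conditions have the same truth value, and that $\ell(w s_\beta) \neq \ell(w)$ always holds, so that a strict direction is assigned to every admissible move (matching the fact that $w(\beta)$ is always strictly positive or strictly negative). The strictness is exactly the role played by the infinitesimal $\eps$-component of the length: even for reflections in imaginary or null directions that have no analogue in the finite or affine settings—where the integer part of the length may be left unchanged—the $\eps$-component registers a nonzero change, so that the lexicographically ordered value group $\ZZ \oplus \ZZ \eps$ never produces a tie. For the equality of truth values I would compute the increment $\ell(w s_\beta) - \ell(w)$ directly from Definition \ref{def:length-function}, viewing $\ell$ as recording an inversion count—with the $\ZZ$-component counting the real double affine roots separating $w$ from the identity and the $\eps$-component counting the imaginary ones—and then identify the sign of this increment with the sign of $w(\beta)$.

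For the bulk of the roots, namely the real double affine roots, I expect the computation to reduce, after writing $w$ in terms of its Weyl-group part and its Tits-cone translation part, to the classical fact for affine Weyl groups that a reflection raises length exactly when it sends the relevant positive root to a positive root; the generalized Iwahori--Matsumoto formula (Theorem \ref{thm:generalized-iwahori-matsumoto-formula} and Theorem \ref{thm:generalized-iwahori-matsumoto-formula-left}) is the structural reason this is the right notion of length, since the branching of that formula into its single-term and multi-term cases is precisely dictated by the sign of $w(\beta)$. The main obstacle, and where the real content lies, is the imaginary direction together with the semigroup constraint: because $\cW_\cT$ is built from the Tits cone and is only a semigroup, not all reflections are admissible, and for those reflections whose root lies in or near the imaginary cone the ordinary affine length bookkeeping degenerates, so I must verify by hand that the $\eps$-component of $\ell$ moves in the direction predicted by the positivity of $w(\beta)$ and never vanishes. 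Dispatching these boundary cases—and checking that restricting to pairs with both $w$ and $w s_\beta$ in $\cW_\cT$ does not break the equivalence—is the crux; once it is done, the displayed equivalence holds in all cases, the two orders coincide, and the Braverman--Kazhdan--Patnaik preorder is a partial order, proving the theorem.
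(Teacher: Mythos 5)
Your high-level reduction is the same as the paper's: both orders are generated by the same unoriented moves $y \leftrightarrow y\, s_{\beta^\vee+n\pi}$, so everything comes down to showing that $\ell(y\, s_{\beta^\vee+n\pi}) > \ell(y)$ exactly when $y(\beta^\vee+n\pi)$ is positive. But the proposal stops where the actual work begins, and the two ingredients it defers are precisely the content of the proof. First, in the generic case (in the paper's notation $y = \pi^\mu w$, $\beta^\vee>0$, and $n + \langle \mu, w(\beta^\vee)\rangle \neq 0$), the translation part of $y\, s_{\beta^\vee+n\pi}$ is $\mu + n\, w(\beta)$, a point on the affine line through $\mu$ and $s_{w(\beta)}(\mu)$; one must show that the integer part of the length, $\ell(\pi^\nu) = \max_{v\in W} 2\langle v(\nu),\rho^\vee\rangle$, strictly increases when that point lies outside the segment from $\mu$ to $s_{w(\beta)}(\mu)$ and strictly decreases when it lies inside (Lemma~\ref{lem:big-length-lemma-2}). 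This is a genuine convexity statement about a maximum over an infinite Weyl orbit, proved by reducing $\beta$ to a simple coroot and comparing $\langle \nu - m\alpha_i, v(\rho^\vee)\rangle$ with $\langle\nu, v(\rho^\vee)\rangle$ according to the sign of $v^{-1}(\alpha_i)$. Nothing in your sketch supplies this, and your reading of the $\ZZ$-component of $\ell$ as an inversion count of ``real double affine roots separating $w$ from the identity'' (with the $\eps$-part counting imaginary ones) is not what Definition~\ref{def:length-function} says, so the ``classical fact for affine Weyl groups'' you invoke is not directly available here.

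Second, the degenerate case you correctly flag as the crux is not about imaginary roots --- every double affine root in play has a real affine $\beta^\vee$-part --- but is the case $n = -\langle \mu, w(\beta^\vee)\rangle$, where $y\, s_{\beta^\vee+n\pi} = s_{w(\beta)}\,\pi^\mu w$ and the integer part of the length is unchanged. The paper computes the $\eps$-increment by iterating the left-hand recursion (Lemma~\ref{lem-length-recursion-left}) along a reduced word for $s_{w(\beta)}$, obtaining the signed count $\left|\{\gamma^\vee \in \Inv{s_{w(\beta)}} : \langle\mu,\gamma^\vee\rangle \geq 0\}\right| - \left|\{\gamma^\vee \in \Inv{s_{w(\beta)}} : \langle\mu,\gamma^\vee\rangle < 0\}\right|$, and then needs Lemma~\ref{lem:inversion-set-lemma}: via the involution $\gamma^\vee \mapsto -s_{w(\beta)}(\gamma^\vee)$ on the odd-order inversion set, whose unique fixed point is $w(\beta^\vee)$, the sign of that difference equals the sign of $\langle\mu, w(\beta^\vee)\rangle$. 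This pairing argument is what makes the $\eps$-component move in the direction predicted by the positivity of $y(\beta^\vee+n\pi)$ and never vanish; it is exactly the step your plan promises to ``verify by hand'' without doing. So the proposal is a correct outline of the paper's strategy, but as a proof it has a genuine gap: both the convexity lemma for the integer part and the inversion-set involution for the $\eps$-part are missing.
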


However, we must note that the length function used in the second partial ordering is not a naive generalization of the Coxeter length function which takes values in $\NN$. Instead, our length function take values in $\ZZ \oplus \ZZ \varepsilon$ ordered lexicographically (i.e. so $\varepsilon$ is ``infinitesimally small'' compared to an integer).

Because the ordinary length function for Weyl groups of Kac-Moody groups records the dimension of Schubert varieties, this seems to indicate that the dimensions of Schubert varieties in the ``double affine flag variety'' may naturally take values in $\ZZ \oplus \ZZ \varepsilon$. We do not currently have a good explanation for why this should be true geometrically, but it seems to indicate some very interesting phenomena.

\newcommand{\PP}{\mathbb{P}}
\subsection{Towards Kazhdan-Lusztig theory}
The longer term goal is to use the algebraic theory of $\cH(G^+,I)$ to understand the geometry of the yet-to-be-defined double affine flag variety. 

Thus to develop Kazhdan-Lusztig theory we need to accomplish the following tasks:
\begin{enumerate}
\item Explicitly understand the double coset basis. In particular, show that the structure constants depend polynomially on $q$.
\item Develop the strong Bruhat order.
\item Define and develop the Kazhdan-Lusztig involution.
\end{enumerate}

In this paper, we have made progress towards the first two tasks. What remains is to explicitly understand the Kazhdan-Lusztig involution, which we plan to address in a future paper. In finite type, this reduces to understanding $SL_2$, where the flag variety is $\PP^1$. However, in the double affine case we do not have such a simplification essentially because the double affine Weyl group is far from being a Coxeter group,. 

\subsection{The work of Bardy-Panse, Gaussent, Rousseau and generalizations}
We should mention that at the same time as this work, independent work by Bardy-Panse, Gaussent and Rousseau has appeared \cite{bgr}, which defines Iwahori-Hecke algebras in the general Kac-Moody case. Their main technical tool is the notion of a \emph{hovel}, a generalization of the notion of the affine building to Kac-Moody groups. We don't use hovels; instead, we make repeated use of the familiar Iwahori factorization to prove our computations. We both produce the same generalization of the Iwahori-Matsumoto formula \cite[Proposition 4.1]{bgr}. They do not however study the Bruhat order. 

Our results in Section \ref{sec:the-double-coset-basis} are currently stated only in the untwisted affine case, but the methods of proof are not specific to this case. We have restricted to this case because we use the results of Braverman, Kazhdan, and Patnaik for reasons related to the well-definedness of the algebra. If one knows well-definedness more generally, our proofs should work without modification.

\subsection{Acknowledgements}
I thank Alexander Braverman, Manish Patnaik, and Anna Pusk\'as for numerous fruitful conversations. 
\section{Preliminaries}

\subsection{Kac-Moody root data}
The following definitions are standard, and we refer the reader to \cite{kac,kumar,tits} for more details. Because we will mostly be working with coweights and coroots, we use the superscript $\vee$ to refer to weights and roots unlike the usual convention. 

Let $P$ be a finite-rank lattice, i.e. a finite-rank free abelian group, and let $P^\vee$ be its dual lattice. We will call $P$ the \emph{coweight lattice} and $P^\vee$ the \emph{weight lattice}. Let $I$ be a finite indexing set, and suppose we are given two embeddings
\begin{align}
  \label{eq:1}
  \alpha : I \hookrightarrow P \\
  \alpha^\vee : I \hookrightarrow P^\vee
\end{align}
For $i \in I$, we will follow the usual notation and write $\alpha_i$ (resp. $\alpha_i^\vee$) for $\alpha(i)$ (resp. $\alpha^\vee(i)$). Let $\Pi = \{ \alpha_i | i \in I \}$ and $\Pi^\vee = \{ \alpha^\vee_i | i \in I \}$. We call $\Pi$ (resp. $\Pi^\vee$) the set of \emph{simple coroots} (resp. \emph{simple roots}). A \emph{Kac-Moody root datum} $D$ is a tuple $(P,P^\vee,I,\alpha,\alpha^\vee)$ as above such that the matrix $A = ( \langle \alpha_i , \alpha^\vee_j \rangle ) _{i,j \in I}$ is a \emph{generalized Cartan matrix}. Let $Q$ be the sublattice of $P$ generated by $\Pi$, and let $Q^\vee$ be the sublattice of $P^\vee$ generated by $\Pi^\vee$. We call $Q$ (resp. $Q^\vee$) the \emph{coroot lattice} (resp. the \emph{root lattice}).

To each $i \in I$, we let $s_i$ be the linear automorphism of $P$ given by the following formula. 
\begin{align}
  s_i(\mu) = \mu - \langle \mu, \alpha_i^\vee \rangle \alpha_i
\end{align}
The \emph{Weyl group} $W$ of the root datum is defined to be the subgroup of $GL(P)$ generated by $\{s_i \mid i \in I\}$. It is known that $W$ is a Coxeter group.

There is an obvious notion of direct sum of root data, and we say that a root datum is \emph{irreducible} if it cannot be written as a direct sum of non-trivial root data.

\subsubsection{Fundamental (co)weights and $\rho^\vee$}
The \emph{dominant cone} $P^{++}\subset P$ is defined by the following.
\begin{align}
  \label{eq:4}
  P^{++} = \{ \lambda \in P | \langle \lambda, \alpha^\vee_i \rangle \geq 0 \text{ for all } i \in I \}
\end{align}
The \emph{Tits cone} $\cT \subset P$ is defined as $\cT = \bigcup_{w \in W} w(P^{++})$.

We say that a set $\{ \Lambda_i \mid i \in I \}$ of coweights indexed by $I$ is a set of \emph{fundamental coweights} if the following holds for all $i,j \in I$.
\begin{align}
  \label{eq:2}
 \langle \Lambda_i , \alpha^\vee_j \rangle = \delta_{i,j} 
\end{align}
Similarly we say that a set of weights $\{ \Lambda^\vee_i \mid i \in I \}$ is a set of \emph{fundamental weights} if the analagous statement holds with the positions of the superscript $\vee$ reversed.

Let us choose fundamental coweights and fundamental weights, and let us define 
\begin{align}
  \label{eq:3}
\rho^\vee = \sum \Lambda^\vee_i
\end{align}
Note that unlike in the case of a finite-dimensional Kac-Moody algebra (i.e. a semi-simple Lie Algebra), in general we must make a choice to define the fundamental coweights and weights because the simple coroots (resp. roots) do not form a basis of the coweight (resp. weight) space. 

\subsection{Kac-Moody groups}
To a Kac-Moody root datum $D$, Tits \cite{tits} associates a group functor $\bG_D$ on the category of commutative rings called the \emph{Kac-Moody group functor} associated to $D$. Unless the generalized Cartan matrix of $D$ is finite-type, this group functor is infinite-dimensional and will not be representable by a scheme. However, $\bG_D$ is representable by an \emph{affine group ind-scheme} of ind-finite type (see \cite{mathieu}). We will only refer to a single root datum at a time, so we will drop the subscript $D$. %%\dinakar{Mention that we are working with the minimal Kac-Moody group.} 

The group $\bG$ comes equipped with a pair of Borel subgroups $\bB^+$ and $\bB^-$. The subgroups $\bU^+$ and $\bU^-$ are their respective unipotent radicals, and the subgroup $\bA=\bB^+ \cap \bB^-$ is a finite-dimensional split torus. Note that we work with the {\it minimal Kac-Moody group}, so neither $\bB^+$ nor $\bB^-$ are completed.

We have natural identifications:
\begin{align} 
P = \hom(\GG_m, \bA)
\end{align}
and
\begin{align}
P^\vee = \hom(\bA,\GG_m)
\end{align}

\newcommand{\Lie}{\text{Lie}}
Moreover, we can identify $\bA= \bB^+/\bU^+ = \bB^-/\bU^-$.

\subsubsection{Roots and inversion sets.}
If we take points over $\CC$ (any characteristic zero field would do), we can look at $\bA(\CC)$ acting on $\fu^+ = \Lie(U^+)$ via the adjoint action. The set of \emph{positive roots} $\Delta_+$ is the set of weights for this action. Similarly, the \emph{negative roots} $\Delta_-$ are the weights for the action on $\fu^- = \Lie(U^-)$. By the construction of $\bG$, one sees that the simple roots are positive roots, and the set of \emph{real roots} are defined to be those roots obtained by translating simple roots by the Weyl group. 
\newcommand{\real}{\text{re}}
Let us write $\Delta_\real$ for the set of real roots, $\Delta_{+,\real}$ for the set of positive real roots, and $\Delta_{-,\real}$ for the set of negative real roots.

For each $w\in W$, define the \emph{inversion set} $\Inv{w}$ by
\begin{align}
  \label{eq:8}
  \Inv{w} = \{ \beta^\vee \in \Delta_{+} \mid w(\beta^\vee) \in \Delta_{-}\}
\end{align}

\subsubsection{Lifting Weyl group elements} 
For each $i \in I$, we have an \emph{$\mathbf{SL_2}$-root subgroup}
\begin{align}
  \label{eq:5}
  \phi_i : \mathbf{SL_2} \hookrightarrow \bG
\end{align}
Let us define 
\begin{align}
  \label{eq:6}
\overline{s_i} = \phi_i\left(\twomat{0}{-1}{1}{0}\right)
\end{align}
The map
\begin{align}
  \label{eq:7}
  s_i \mapsto \overline{s_i}
\end{align}
is a homomorphism from the braid group corresponding to $W$ to $\bG$. In particular, we can define $\overline{w} = \overline{s_{i_1}}\cdots \overline{s_{i_k}} \in \bG$ where $w = s_{i_1} \cdots s_{i_k}$ is a reduced decomposition. To declutter the notation we will omit the overline, and simply write $w \in \bG$ to denote $\overline{w}$.

\subsubsection{Steinberg relations}
To each real root $\beta$, there is an associated one-parameter subgroup
\begin{align}
  \label{eq:one-param-subgroups}\
  x_\beta : \GG_a \rightarrow \bG
\end{align}
If $\beta$ is positive, this morphism factors through $\bU^+$, and if $\beta$ is negative it factors through $\bU^-$.

Following \cite{tits} and \cite[Section 2.2.1]{bkp}, we say that a set $\Psi \subset \Delta_\real$ of real roots is \emph{pre-nilpotent} if there exist $w, w' \in W$ such that
\begin{align}
  w \Psi \subset \Delta_{+,\real} \\
  w' \Psi \subset \Delta_{-,\real} 
\end{align}
Given a pre-nilpotent pair $\{ \alpha, \beta \}$, set  $\theta(\alpha,\beta) = (\NN\alpha + \NN\beta) \cap \Delta_\real$. Then for any total order on $\theta(\alpha,\beta) - \{\alpha,\beta\}$, there exist a unique set of integers $k(\alpha,\beta ; \gamma)$ such that for any ring $S$ we have
\begin{align}
  \label{eq:steinberg-relations}
  x_\alpha(u)x_\beta(\tilde
u)x_\alpha(-u)x_\beta(-\tilde u) = \prod_{\gamma = m\alpha +n\beta \in \theta(\alpha,\beta) - \{\alpha,\beta\}} x_\gamma(k(\alpha,\beta ; \gamma) u^m {\tilde u}^n)
\end{align}
for all $u, \tilde{u} \in S$.

\subsubsection{Affine Kac-Moody group}
For our purposes, an \emph{untwisted affine Kac-Moody group} is a Kac-Moody group whose generalized Cartan matrix appears in the classification given in \cite[Chapter 4, Table Aff 1]{kac}. These groups are of central interest because they can be constructed from the loop groups of finite-type Kac-Moody groups. This relationship is well documented, so we refer the reader to \cite{kac,kumar,bkp} for details. There is a more general notion of affine Kac-Moody group that includes {\it twisted} loop groups. We do not address this case, so from now on we will simply write ``affine'' to mean ``untwisted affine''. Below we recall a few relevant facts about affine Kac-Moody root data.  

There is a canonical central cocharacter $\delta \in Q$, and a canonical imaginary root $\delta^\vee \subset Q^\vee$. We get a natural map:
\begin{align}
  \label{level}
  P \rightarrow \ZZ, \ \mu \mapsto \langle \mu, \delta^\vee \rangle
\end{align}
This is called the \emph{level} of the coweight. 

We write $P_k$ for the level-$k$ elements of $P$. In the affine case, we can describe the Tits cone explicitly
\begin{align}
  \label{eq:affine-Tits-cone}
  \cT = \cT_0 \oplus \bigoplus_{k > 0} P_k
\end{align}
where $\cT_0 = \{ r \delta \mid r \in \ZZ \}$.

\subsubsection{``Affine'' Weyl groups}

Because the Weyl group $W$ acts on the abelian group $P$ by automorphisms, we can form the semi-direct product 
\begin{align}
\cW_{P} = W \ltimes P
\end{align}
For $\mu \in P$, we denote by $\pi^\mu$ the corresponding element of $\cW_{P}$. The pair $(w,\mu) \in \cW_\cT$ will be written $w \pi^\mu$. 

One can easily verify that $Q \subset P$ and $\cT \subset P$ are each preserved under the Weyl group action. So we can also form:
\begin{align}
\cW_{Q} = W \ltimes Q
\end{align}
and
\begin{align}
\cW_\cT = W \ltimes \cT
\end{align}
Because $\cT$ is not closed under subtraction, $\cW_\cT$ is only a semi-group.

When $\bG$ is a simply-connected finite-type Kac-Moody group, we have 
\begin{align}
  \label{eq:affine-weyl-groups-in-finite-type}
  \cW_Q = \cW_P = \cW_\cT
\end{align}
but in general we have
\begin{align}
  \label{eq:affine-weyl-groups-in-general}
  \cW_Q \subset  \cW_P \supset \cW_\cT
\end{align}
In general, $\cW_Q$ and $\cW_\cT$ are not comparable.

When $\bG$ is affine type, $\cW_\cT$ has a natural ``level'' grading by non-negative integers where $\left(\cW_\cT\right)_n = \{ w \pi^\mu \in \cW_\cT \mid \text{level}(\mu) = n \}$. For each non-negative integer $n$, we say that $(\cW_\cT)_n$ is the set of elements in $\cW_\cT$ of \emph{level} $n$.

\subsection{Taking $p$-adic points}

\newcommand{\QQ}{\mathbb{Q}}
\subsubsection{Non-archimedean local fields}
Let $F$ be a non-archimedean local field. This means that $F$ is either isomorphic to the field of Laurent series over a finite field, or it is isomorphic to a finite extension of the field $\QQ_p$ of $p$-adic numbers.

Let $\cO$ be the ring of integers in $F$, let $\pi \in \cO$ be a uniformizing element, and let $k$ be the residue field of $\cO$. We let $q$ denote the cardinality of $k$. 

\subsubsection{Various subgroups of the $p$-adic group.}
We write $G = \bG(F)$. Abusing terminology, we call $G$ a \emph{$p$-adic group} even if $F$ has positive characteristic.
We write $K = \bG(\cO)$. We write $U^+_\cO = \bU^+(\cO)$, $U^-_\cO = \bU^-(\cO)$, $A_\cO =\bA(\cO)$, and $U^-_\pi = \{ u \in \bU^-(\cO) \mid u \equiv 1 \mod \pi \}$.

The Iwahori subgroup $I$ is defined as
\begin{align}
  \label{eq:iwahori-subgroup}
  I = \{ i \in K \mid i \in \bB^+(k) \mod \pi \}
\end{align}

We then have the following group decomposition known as the \emph{Iwahori factorization} (see \cite[Section 2]{im} and \cite[Section 3.1.2]{bkp}).
\begin{Proposition}
  \begin{align}
  I = U^+_\cO \cdot U^-_\pi \cdot A_\cO
  \end{align}
This also holds if we reorder the three factors in any way.
\end{Proposition}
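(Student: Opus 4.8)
The plan is to prove the two inclusions separately for one fixed ordering and then deduce all six reorderings formally. Throughout, let $K_1 = \{ g \in K \mid g \equiv 1 \bmod \pi \}$ be the principal congruence subgroup, so that $I$ is exactly the preimage of $\bB^+(k)$ under the reduction map $K = \bG(\cO) \to \bG(k)$, and write $U^\pm_\pi$ and $A_\pi$ for the principal congruence subgroups of $\bU^\pm(\cO)$ and $\bA(\cO)$. The inclusion $U^+_\cO \cdot U^-_\pi \cdot A_\cO \subseteq I$ is immediate: modulo $\pi$, elements of $U^+_\cO$ reduce into $\bU^+(k)$, elements of $A_\cO$ into $\bA(k)$, and elements of $U^-_\pi$ to the identity, so each factor lies in $I$; since $I$ is a group the whole product set is contained in it.

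For the reverse inclusion I would first reduce and lift. Given $x \in I$, its image $\overline{x}$ lies in $\bB^+(k) = \bU^+(k) \rtimes \bA(k)$, so $\overline{x} = \overline{u}\,\overline{a}$ with $\overline{u} \in \bU^+(k)$ and $\overline{a} \in \bA(k)$. The reduction maps $\bU^+(\cO) \to \bU^+(k)$ and $\bA(\cO) \to \bA(k)$ are surjective --- on each root subgroup because $\cO \to k$ is surjective, and on the split torus because $\cO^\times \to k^\times$ is surjective --- so I may choose lifts $u \in U^+_\cO$ and $a \in A_\cO$. Then $g := u^{-1} x a^{-1}$ reduces to the identity, i.e. $g \in K_1$, and $x = u\,g\,a$. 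Thus everything comes down to factoring the congruence element $g$.

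The heart of the argument is the claim that $K_1 = U^+_\pi \cdot U^-_\pi \cdot A_\pi$. I would prove this using the open (big) cell of the Kac--Moody group: the multiplication map $\bU^+ \times \bA \times \bU^- \to \bG$ is an open immersion onto an open subscheme $\Omega$, with inverse given by coordinate morphisms defined integrally. An element $g \in K_1$ reduces to $1 \in \Omega(k)$; since $\Spec \cO$ is local, the only open subset of $\Spec \cO$ containing its closed point is the whole space, so the $\cO$-point $g$ factors through $\Omega$, i.e. $g \in \Omega(\cO)$. The coordinate morphisms then yield $g = u_+\,a_0\,u_-$ with $u_\pm \in \bU^\pm(\cO)$ and $a_0 \in \bA(\cO)$; reducing modulo $\pi$ and using that the factorization of $\overline{g}=1$ is $(1,1,1)$ forces $u_\pm \in U^\pm_\pi$ and $a_0 \in A_\pi$. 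Since $A_\pi$ normalizes $U^-_\pi$ (conjugation preserves the congruence condition), rewriting $a_0 u_- = (a_0 u_- a_0^{-1})\,a_0$ puts $g = u_+\,u_-'\,a_0$ in the desired order, proving the claim. Feeding this back, $x = u\,g\,a = (u u_+)\,(u_-')\,(a_0 a) \in U^+_\cO \cdot U^-_\pi \cdot A_\cO$.

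Finally, all six orderings follow formally. Because $I$ is a group, taking inverses of the established equality $I = U^+_\cO \cdot U^-_\pi \cdot A_\cO$ and using that each factor is itself a subgroup gives $I = A_\cO \cdot U^-_\pi \cdot U^+_\cO$, which reverses the relative order of the two unipotent factors; and because $A_\cO$ normalizes both $U^+_\cO$ and $U^-_\pi$, the torus factor may be slid into any of the three positions. These two moves generate all six orderings. The step I expect to be the main obstacle is the big-cell factorization of $K_1$: one must make sense of the open cell and its integral coordinate morphisms in the infinite-dimensional ind-scheme setting, and confirm that an $\cO$-point reducing to the identity genuinely lands in $\Omega(\cO)$ with coordinates in the principal congruence subgroups. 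Once this integrality is secured, the lifting and the reordering are purely formal.
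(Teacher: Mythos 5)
The paper offers no proof of this Proposition at all --- it is stated with a citation to Iwahori--Matsumoto and to \cite[Section 3.1.2]{bkp} --- so there is no in-paper argument to compare against; what you have written is the standard proof, reconstructed correctly in outline. The easy inclusion, the reduce-and-lift step $x = u\,g\,a$ with $g$ in the principal congruence subgroup $K_1$, and the formal derivation of all six orderings (inversion reverses the relative order of $U^+_\cO$ and $U^-_\pi$, and $A_\cO$ normalizes both, and these two moves do generate all six permutations) are all sound. The one step that genuinely needs care is the one you flag yourself: realizing $\bU^+ \times \bA \times \bU^-$ as an \emph{open} sub-ind-scheme of $\bG$ with coordinate morphisms defined over $\cO$. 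Openness is a delicate notion for ind-schemes, and the paper itself signals a preference for avoiding it: in the proof of Lemma~\ref{lem:UplusUminusK} the analogous integrality statement $\bU^+(F)\bU^-(F)\cap\bG(\cO)=\bU^+(\cO)\bU^-(\cO)$ is obtained by exhibiting $\bU^+\bU^-$ as a \emph{closed} sub-ind-scheme cut out by the generalized minors $\Delta_\lambda$, for which the relevant base-change statement is formal. The same device repairs your key step without any appeal to openness: for $g \in K_1$ one has $\Delta_\lambda(g)\equiv 1 \pmod{\pi}$, hence $\Delta_\lambda(g)\in\cO^\times$ for every antidominant $\lambda$, which is the standard integral criterion for membership in the big cell and produces the factorization $g = u_+ a_0 u_-$ with all factors over $\cO$; reducing mod $\pi$ then forces the factors into the congruence subgroups exactly as you say. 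With that substitution (or with a careful justification of the open-immersion claim at the ind-scheme level), your argument is complete and agrees with the proof in the cited sources.
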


We will also need the following lemma.

\begin{Lemma}
  \label{lem:UplusUminusK}
  \begin{align}
    \label{eq:15}
    \left(\bU^+(F) \bU^-(F)\right) \cap \bG(\cO) = \bU^+(\cO) \bU^-(\cO)
  \end{align}
\end{Lemma}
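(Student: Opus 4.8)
The plan is to prove the nontrivial inclusion $\subseteq$; the reverse inclusion $\supseteq$ is immediate, since $\bU^+(\cO)\subseteq\bU^+(F)$ and $\bU^-(\cO)\subseteq\bU^-(F)$ while an element of $\bU^+(\cO)\bU^-(\cO)$ is a product of two integral elements and hence lies in $\bG(\cO)$. So suppose $g = u^+ u^- \in \bG(\cO)$ with $u^+ \in \bU^+(F)$ and $u^- \in \bU^-(F)$; I must show $u^+ \in \bU^+(\cO)$ and $u^- \in \bU^-(\cO)$. The idea is to detect the two factors separately by letting $g$ and $g^{-1}$ act on an integrable highest weight module and using that $\bG(\cO)$ preserves the Kostant $\ZZ$-form.

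First I would fix a regular dominant weight, e.g.\ $\Lambda = \rho^\vee$, and let $V = V(\Lambda)$ be the corresponding integrable highest weight $\bG$-module with highest weight vector $v_\Lambda$ and Kostant $\ZZ$-form $V_\ZZ$; write $V_\cO = V_\ZZ\otimes_\ZZ\cO$. I will use two standard facts: $v_\Lambda$ is fixed by $\bU^+$ (the positive root operators annihilate it), and $\bG(\cO)$ preserves the lattice $V_\cO$. Now compute with $g^{-1} = (u^-)^{-1}(u^+)^{-1}$: since $(u^+)^{-1}\in\bU^+(F)$ fixes $v_\Lambda$,
\[
  g^{-1} v_\Lambda = (u^-)^{-1}(u^+)^{-1} v_\Lambda = (u^-)^{-1} v_\Lambda .
\]
As $g^{-1}\in\bG(\cO)$ and $v_\Lambda\in V_\cO$, the left side lies in $V_\cO$, and therefore $(u^-)^{-1}v_\Lambda\in V_\cO$.

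The key step is to pass from $(u^-)^{-1}v_\Lambda\in V_\cO$ back to $(u^-)^{-1}\in\bU^-(\cO)$. For a regular dominant $\Lambda$, the orbit map $\bU^-\to V$, $u\mapsto u\,v_\Lambda$, is a closed immersion of ind-schemes defined over $\ZZ$: its image is the affine ``big cell'', and the stabilizer of $v_\Lambda$ in $\bU^-$ is trivial (see \cite{kumar}). Consequently $\bU^-(\cO)$ is exactly the set of $u\in\bU^-(F)$ with $u\,v_\Lambda\in V_\cO$, because a point of the closed subscheme $\bU^-\subseteq V$ whose coordinates lie in $\cO$ and which satisfies the $\ZZ$-defined equations over $F$ automatically satisfies them over $\cO$. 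Applying this to $(u^-)^{-1}$ yields $(u^-)^{-1}\in\bU^-(\cO)$, hence $u^-\in\bU^-(\cO)$. This identification of integral points through the inverse of the orbit parametrization is the one substantive point; everything else is formal, and I expect it to be the main obstacle, resting on the integrality (over the Kostant $\ZZ$-form) of the highest-weight orbit map.

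Finally I recover $u^+$. Since $u^-\in\bU^-(\cO)\subseteq\bG(\cO)$ and $g\in\bG(\cO)$, the element $u^+ = g\,(u^-)^{-1}$ lies in $\bG(\cO)$; as it also lies in $\bU^+(F)$ and $\bU^+$ is a closed subgroup of $\bG$ defined over $\ZZ$, the same closed-immersion reasoning gives $u^+\in\bU^+(F)\cap\bG(\cO)=\bU^+(\cO)$. Alternatively, one obtains $u^+\in\bU^+(\cO)$ by the argument symmetric to the one above, applying the Chevalley involution, which is defined over $\cO$ and preserves $\bG(\cO)$, to exchange the roles of $\bU^+$ and $\bU^-$. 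This completes the plan.
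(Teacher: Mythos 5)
Your argument is correct in outline but follows a genuinely different route from the paper's. The paper cuts out the product $\bU^+\bU^-$ as a closed sub-ind-scheme of $\bG$ defined over $\ZZ$, using the generalized minors $\Delta_\lambda(g)=\langle v_\lambda^*,\,g v_\lambda\rangle$ attached to integrable lowest weight modules (these equal $1$ precisely on $\bU^+\bU^-$, up to nilpotents), and then invokes the general fact that for a closed subscheme $\bX\subset\bY$ defined over a subring $A\subset B$ one has $\bX(A)=\bX(B)\cap\bY(A)$. You instead isolate the factor $u^-$ by acting on a single highest weight vector and recover its integrality from the orbit map $\bU^-\to V(\Lambda)$, $u\mapsto u\,v_\Lambda$, being a closed immersion over $\ZZ$; the reduction to one unipotent factor at a time (and the use of $u^+v_\Lambda=v_\Lambda$ to make the other factor invisible) is a nice simplification, and your final step for $u^+$ uses the same integral-points-of-closed-subschemes principle as the paper. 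The one place you should be more careful is your self-identified ``key step'': the citation to \cite{kumar} only covers the complex (or characteristic-zero) theory, whereas what your argument actually needs is the integral statement --- that the coordinate functions of $\bU^-$ are recovered as polynomials with $\ZZ$-coefficients in the matrix coefficients of $u\,v_\Lambda$ with respect to the Kostant $\ZZ$-form (concretely, one solves for the parameters $t_\gamma$ by induction on the height of $\gamma$, and one must check no denominators appear, e.g.\ that the relevant vectors $f_\gamma v_\Lambda$ are primitive in $V_\ZZ$). Since the whole point of the lemma is to compare $\cO$-points with $F$-points, this integrality is exactly where the content sits; it is true and of comparable depth to the paper's asserted claim that the equations $\Delta_\lambda=1$ cut out $\bU^+\bU^-$ over $\ZZ$, but as written it is asserted rather than proved. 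Note also that the paper observes that in the only case where the lemma is actually needed (untwisted affine $\bG$) the statement is already available in \cite{bkp} and \cite{bgkp}.
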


\begin{proof}
When $\bG$ is untwisted affine (which is the only case where we will actually need the lemma), this lemma is \cite[Appendix A.7]{bkp} (see also \cite[Lemma 3.3]{bgkp}). 

We give another argument that works in general. We claim the product $\bU^+ \bU^- \subset \bG$ is a closed sub-indscheme defined over $\ZZ$. For each antidominant weight $\lambda$, consider the integrable representation $L(\lambda)$ of lowest weight $\lambda$. It is known that this representation is defined over $\ZZ$. Let $v_\lambda$ be a lowest weight vector generating the lowest weight line, and let $v_\lambda^*$ be the covector in the dual representation of weight $-\lambda$ such that $\langle v_\lambda^*, v_\lambda \rangle = 1$. We consider the following function on $\bG$ 
\begin{align}
  \label{eq:16}
 \Delta_\lambda : g \mapsto \langle v_\lambda^*, g v_\lambda \rangle 
\end{align}
Using the Bruhat decomposition one can verify that, up to nilpotents, $\bU^+ \bU^-$ is cut out by the equations $\Delta_\lambda = 1$ as $\lambda$ varies over all anti-dominant weights.
In particular, regardless of nilpotents, we see that $\bU^+ \bU^-$ is a closed subscheme defined over $\ZZ$. 

We appeal to the following general fact: Let $B$ be a commutative ring, and let $A$ be a subring of $B$. Suppose $\bY$ is an affine scheme defined over $A$, and suppose $\bX \subset \bY$ is a closed subscheme defined over $A$. Then we have 
\begin{align}
  \label{eq:18}
 \bX(A) = \bX(B) \cap \bY(A) 
\end{align}

In particular, this also applies when $\bX$ and $\bY$ are ind-affine ind-schemes. As $\bG$ is an ind-affine ind-scheme, we apply this in the case of $\bU^+ \bU^- \subset \bG$.

\end{proof}

\subsubsection{Failure of the Cartan and Iwahori decomposition}

Recall that we identified $P$ with the cocharacter lattice of algebraic homomorphisms from $\GG_m$ to $\bA$. Taking $F$-points, for each $\mu \in P$, we obtain a group homomorphism 
\begin{align}
  \label{eq:9}
  F^* \rightarrow \bA(F)
\end{align}
We denote the image of $\pi$ under this map by $\pi^\mu$.

If $\bG$ is finite-type, i.e. it is a split semi-simple group, then we have the Cartan decomposition.
\begin{align}
G = \bigsqcup_{\lambda \in P^{++}} K \pi^\lambda K
\end{align}
However, Garland observed \cite{garland} that this is no longer true when $\bG$ is infinite-type. In this case, we define the following subset of $G$. 
\begin{Definition}
\begin{align}
G^+ = \bigsqcup_{\lambda \in P^{++}} K \pi^\lambda K
\end{align}
\end{Definition}
\begin{Theorem}{\cite{bk,garland},\cite[Appendix A]{bkp}}
 If $\bG$ is an untwisted affine Kac-Moody group, then $G^+$ is a sub-semi-group of $G$.
\end{Theorem}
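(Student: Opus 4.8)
The plan is to exploit that $G^+$ is by construction invariant under left and right multiplication by $K$, so it suffices to show that $\pi^{\lambda_1}\, k\, \pi^{\lambda_2}\in G^+$ for all $\lambda_1,\lambda_2\in P^{++}$ and all $k\in K$. A second, crucial reduction comes from the fact that the Weyl lifts $\dot w$ lie in $K=\bG(\cO)$, since their $\mathbf{SL_2}$-matrix entries are integers: as $\pi^{w\mu}=\dot w\,\pi^{\mu}\,\dot w^{-1}$, we get $K\pi^{w\mu}K=K\pi^{\mu}K$ for every $w\in W$. Because $P^{++}$ is a fundamental domain for $W$ acting on the Tits cone $\cT$, this means $G^+=\bigcup_{\mu\in\cT}K\pi^{\mu}K$, and so it is enough to prove that $\pi^{\lambda_1}k\pi^{\lambda_2}$ lies in a single Cartan double coset $K\pi^{\mu}K$ with $\mu\in\cT$.

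The basic mechanism is conjugation of root subgroups by $\pi^{\lambda}$, namely $\pi^{\lambda}x_\beta(u)\pi^{-\lambda}=x_\beta(\pi^{\langle\lambda,\beta\rangle}u)$. If $\lambda$ is dominant and $\beta$ is a positive real root then $\langle\lambda,\beta\rangle\ge 0$, so conjugation by $\pi^{\lambda_1}$ on the left keeps $U^+_\cO$ inside $U^+_\cO\subseteq K$; dually, conjugation by $\pi^{\lambda_2}$ on the right keeps $U^-_\cO$ inside $K$, while $A_\cO$ commutes with the $\pi^{\lambda_i}$. Hence, for $k\in I$, writing $k=u^+\,a\,u^-$ via the Iwahori factorization and sliding the factors out gives directly $\pi^{\lambda_1}k\pi^{\lambda_2}\in K\,\pi^{\lambda_1+\lambda_2}\,K$, and $\lambda_1+\lambda_2\in P^{++}$. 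To treat a general $k\in K$ one decomposes $K=\bigsqcup_{w\in W} I\dot w I$ and proceeds by induction on the length of $w$, reducing each step to a rank-one $\mathbf{SL_2}$ computation in the spirit of \cite{garland}.

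The hard part is precisely that, unlike in finite type, $W$ here is the full (infinite) affine Weyl group, so this induction does not obviously terminate, and---since the Cartan decomposition fails---there is no a priori reason that $\pi^{\lambda_1}k\pi^{\lambda_2}$ should lie in any Cartan coset $K\pi^{\mu}K$ at all. Controlling this is where the affine hypothesis is essential. The approach I would take is representation-theoretic: for each dominant $\Lambda\in P^\vee$ use the integrable highest-weight module $V_\Lambda$ together with its $\cO$-lattice $V_{\Lambda,\cO}$ (the $\ZZ$-forms already invoked in the proof of Lemma \ref{lem:UplusUminusK}), and track the lattice $\pi^{\lambda_1}k\pi^{\lambda_2}\cdot V_{\Lambda,\cO}$. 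The valuations of the extremal matrix coefficients $g\mapsto\langle v_\Lambda^*, g\,v_\Lambda\rangle$ recover $\langle\mu,\Lambda\rangle$ and hence detect $\mu$ as $\Lambda$ ranges over fundamental weights, while the convexity bound $\mu\preceq\lambda_1+\lambda_2$ in the dominance order supplies the finiteness needed to conclude membership in a single coset; this is the technical core carried out in \cite[Appendix A]{bkp} and \cite{garland,bk}.

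Finally, once $\pi^{\lambda_1}k\pi^{\lambda_2}\in K\pi^{\mu}K$ is known, I would check $\mu\in\cT$ using the level. The level is additive and non-negative on $G^+$, so $\langle\mu,\delta^\vee\rangle=\langle\lambda_1,\delta^\vee\rangle+\langle\lambda_2,\delta^\vee\rangle\ge 0$; if it is strictly positive then $\mu\in\cT$ by the explicit affine description $\cT=\cT_0\oplus\bigoplus_{k>0}P_k$, and the level-zero case forces the relevant factor to be a central power of $\pi^{\delta}$, which trivially preserves $G^+$. Combined with the Weyl-conjugation reduction of the first paragraph, which lets us replace $\mu$ by its dominant $W$-representative, this places $\pi^{\lambda_1}k\pi^{\lambda_2}$ in $G^+$, proving that $G^+$ is closed under multiplication. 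The single step I expect to be genuinely delicate is establishing membership in some Cartan coset; everything else is bookkeeping built on the Iwahori factorization and the conjugation formula.
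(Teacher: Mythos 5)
First, a point of comparison: the paper does not prove this theorem at all --- it is imported from \cite{garland}, \cite{bk}, and \cite[Appendix A]{bkp} --- so there is no in-paper argument to measure yours against. Your peripheral reductions are all correct and standard: $K$-bi-invariance reduces the problem to showing $\pi^{\lambda_1}k\pi^{\lambda_2}\in G^+$; the Weyl lifts $\overline{w}$ lie in $K$, so $K\pi^{w(\mu)}K=K\pi^{\mu}K$ and $G^+=\bigcup_{\mu\in\cT}K\pi^{\mu}K$; the Iwahori factorization together with the conjugation formula for root subgroups disposes of the case $k\in I$, giving $\pi^{\lambda_1}k\pi^{\lambda_2}\in K\pi^{\lambda_1+\lambda_2}K$; and the level bookkeeping at the end correctly isolates the only place the affine hypothesis is needed, namely that every positive-level coweight lies in $\cT$ by $\cT=\cT_0\oplus\bigoplus_{k>0}P_k$, so that membership in \emph{some} Cartan coset $K\pi^{\mu}K$ already implies membership in $G^+$.

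However, there is a genuine gap at exactly the step you yourself flag as delicate: you never establish that $\pi^{\lambda_1}k\pi^{\lambda_2}$ lies in any Cartan double coset for general $k\in K$. The induction over $K=\bigsqcup_{w\in W}I\overline{w}I$ has no termination mechanism, since $W$ is infinite and the rank-one moves can increase length indefinitely; and the representation-theoretic alternative (tracking $\cO$-lattices in integrable modules and valuations of extremal matrix coefficients) is described only as a strategy, with the decisive statement --- that the product meets only Cartan/Iwasawa cosets indexed by coweights bounded above by $\lambda_1+\lambda_2$ in dominance order, and that membership in such a coset actually occurs --- deferred back to \cite[Appendix A]{bkp} and \cite{garland,bk}. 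Since those are precisely the sources the theorem is quoted from, the writeup amounts to a reduction of the theorem to itself. To close the gap one must actually carry out the Gindikin--Karpelevich-type estimates of those references (or an equivalent); that analysis is the substance of the theorem, not bookkeeping, and it is the one ingredient your proposal does not supply.
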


If $\bG$ is finite-type, we also have the following Iwahori-decompostion
\begin{align}
G = \bigsqcup_{w \pi^\mu \in \cW_P} I w\pi^\mu I
\end{align}

Again this fails in infinite-type, but in affine type we have the following.
\begin{Proposition}{\cite[Proposition 3.4.2]{bkp}}
Suppose $\bG$ is untwisted affine type. Then we have  
  \begin{align}
  G^+ = \bigsqcup_{w \pi^\mu \in \cW_P} I w\pi^\mu I
  \end{align}
\end{Proposition}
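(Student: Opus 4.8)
The plan is to refine the Cartan-type decomposition $G^+ = \bigsqcup_{\lambda \in P^{++}} K \pi^\lambda K$ into Iwahori double cosets. The first ingredient is the Bruhat decomposition of $K$ itself: reducing modulo $\pi$ gives a surjection $K = \bG(\cO) \twoheadrightarrow \bG(k)$ carrying $I$ onto $\bB^+(k)$, so the Bruhat decomposition of the Kac-Moody group $\bG(k)$ over the residue field $k$ yields $K = \bigsqcup_{w \in W} I w I$. Substituting this into the Cartan decomposition, every $g \in G^+$ lies in a product $I w I \, \pi^\lambda \, I w' I$ with $\lambda \in P^{++}$ and $w, w' \in W$, and the whole problem becomes one of reorganizing these products into single double cosets $I w'' \pi^\mu I$ and then showing that distinct elements $w'' \pi^\mu$ give distinct cosets.

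For the covering statement I would exploit the dominance of $\lambda$ together with the Iwahori factorization $I = U^+_\cO \cdot U^-_\pi \cdot A_\cO$. Conjugation by $\pi^\lambda$ scales the argument of each real root subgroup $x_\beta$ by a power of $\pi$ whose exponent is the pairing of $\lambda$ with the corresponding root; by dominance this exponent is nonnegative on positive roots and nonpositive on negative roots, which gives the contractions $\pi^\lambda U^+_\cO \pi^{-\lambda} \subseteq U^+_\cO$ and $\pi^{-\lambda} U^-_\pi \pi^\lambda \subseteq U^-_\pi$. These let me push the inner Iwahori factors across $\pi^\lambda$ back into $I$, while the $BN$-pair multiplication rule $I w I \cdot I s I \subseteq I w s I \cup I w I$ for simple reflections $s$ (valid inside $K$ because $\bG(k)$ carries a $BN$-pair) absorbs the Weyl factors. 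The net effect is that each $I w I \, \pi^\lambda \, I w' I$ collapses to a finite union of cosets $I w'' \pi^\mu I$, and the identity $\pi^\lambda w' = w' \pi^{(w')^{-1}\lambda}$ shows that the coweights occurring all lie in $W\lambda \subseteq \cT$. Thus the cosets cover $G^+$; note that only $\mu$ in the Tits cone arise, consistent with the fact that $K \pi^\mu K \subseteq G^+$ forces $\mu \in \cT$.

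The main obstacle is disjointness: showing that $I w \pi^\mu I = I w' \pi^{\mu'} I$ forces $w\pi^\mu = w'\pi^{\mu'}$. There are two invariants to extract. First, since $I w \pi^\mu I \subseteq K \pi^\mu K$ and likewise for the primed data, equality of Iwahori cosets forces $K\pi^\mu K = K \pi^{\mu'} K$; by the uniqueness in the Cartan decomposition of $G^+$ the dominant representatives agree, so $\mu$ and $\mu'$ lie in a common orbit $W\lambda$. Second, to pin down $\mu$ exactly within its orbit and then recover $w$, I would place each element in an Iwahori-factorized normal form $u^+ \cdot \pi^\mu w \cdot u^-$ with $u^+ \in U^+_\cO$ and $u^- \in U^-_\pi$, and show this form is unique. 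The crucial input here is Lemma \ref{lem:UplusUminusK}: it controls precisely when a product of opposite unipotents can be integral, which is what forbids two distinct normal forms from representing the same element. Equivalently, one can read off $\mu$ from the $\pi$-adic valuations of the matrix coefficients $\Delta_\lambda$ introduced in the proof of Lemma \ref{lem:UplusUminusK}, since these Iwasawa-type quantities detect the coweight, while the residual finite Weyl data is separated by reduction modulo the first congruence subgroup, where $K = \bigsqcup_{w} I w I$ applies.

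I expect the uniqueness of this normal form to be the genuinely hard step, and to be the reason the statement is confined to $G^+$: in infinite type there is no global $BN$-pair on $G$, so disjointness cannot simply be quoted from Tits-system axioms, and one must instead lean on the two features that survive only on $G^+$, namely the Cartan decomposition and Lemma \ref{lem:UplusUminusK} (whose proof in turn relies on the closedness of $\bU^+\bU^-$). By contrast, the covering step is comparatively routine Iwahori-factorization bookkeeping once the contraction properties of $\pi^\lambda$ are in hand.
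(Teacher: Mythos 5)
The paper does not prove this proposition; it is imported verbatim from \cite[Proposition 3.4.2]{bkp}, so there is no internal argument to compare yours against. That said, your overall strategy for the covering half --- refine the Cartan decomposition of $G^+$ by the Bruhat decomposition $K=\bigsqcup_{w\in W}IwI$ obtained from reduction modulo $\pi$, then use the Iwahori factorization and the contractions $\pi^\lambda U^+_\cO\pi^{-\lambda}\subseteq U^+_\cO$ and $\pi^{-\lambda}U^-_\pi\pi^\lambda\subseteq U^-_\pi$ for dominant $\lambda$ to collapse each product $IwI\,\pi^\lambda\,Iw'I$ into finitely many Iwahori double cosets --- is the standard one and is in the spirit of both \cite{bkp} and the manipulations in Section \ref{sec:the-double-coset-basis} of this paper. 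Your observation that only coweights in the Tits cone occur is correct and in fact shows the index set in the displayed statement should be read as $\cW_\cT$ rather than $\cW_P$: a coset $Iw\pi^\mu I$ with $\mu\notin\cT$ is disjoint from $G^+$, consistent with the paper indexing the double coset basis by $\cW_\cT$.

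The genuine gap is in disjointness, which you rightly flag as the hard step but do not actually supply. The proposed normal form $u^+\cdot\pi^\mu w\cdot u^-$ with $u^+\in U^+_\cO$ and $u^-\in U^-_\pi$ does not exist for a general element of the double coset: writing an element as $i_1\,\pi^\mu w\,i_2$ and Iwahori-factoring $i_1,i_2$, one must conjugate factors of $U^+_\cO$ and $U^-_\pi$ across $\pi^\mu w$, and this sends $x_\beta(u)$ to $x_{w^{\pm1}(\beta)}\bigl(\pm\pi^{\langle\mu,\cdot\rangle}u\bigr)$, which for non-dominant $\mu$ or nontrivial $w$ produces negative valuations or lands in the wrong unipotent radical; so there is no such form whose uniqueness one could then prove. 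The auxiliary devices you invoke also do not separate Iwahori cosets: the functions $\Delta_\lambda$ are invariant under $\bU^+(F)\times\bU^-(F)$, not under $I\times I$, so their valuations distinguish Iwasawa-type cosets rather than $I$-double cosets, and ``reduction modulo the congruence subgroup'' is undefined on $Iw\pi^\mu I$ since its elements are not in $K$. What your argument does correctly yield is that equality of double cosets forces $\mu$ and $\mu'$ into the same $W$-orbit; separating the distinct pairs $(w,\mu)$ inside a single $K\pi^\lambda K$ is precisely the content of the cited result and requires a different mechanism from the one you sketch.
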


\subsubsection{The $p$-adic loop group Iwahori-Hecke algebra.}

When $\bG$ is finite-type, the group $G$ acquires a natural topology under which it is locally compact. In particular, we can choose the Haar measure normalized so that $I$ has measure $1$. In this case, the Iwahori-Hecke algebra $\cH(G,I)$ is the space of compactly-supported complex-valued functions on $G$ that are biinvariant under $I$. The multiplication is convolution.

However, looking carefully at the definition, one can see that the existence of Haar measure are not necessary in order to define the convolution structure on $\cH(G,I)$. The compact-support condition is exactly the condition that a function be supported on finitely many $I$ double cosets, and the well-definedness of the multiplication corresponds exactly to the finiteness of of certain sets. The following is an easy exercise in $p$-adic integration (see, for example, \cite[Section 3.1]{im}). 

\begin{Proposition}
Let $\bG$ be a finite-type Kac-Moody group. For all $x \in \cW$, let $T_x$ be the indicator function of $I x I$ in $\cH(G,I)$ and write
\begin{align}
  \label{eq:iwahori-hecke-algebra-finite-type}
  T_x T_y = \sum_{z \in \cW} a^z_{x,y} T_z
\end{align}
then, 
\begin{align}
\label{eq:structure-coeffs}
a^z_{x,y} = | I \backslash \left(I  x^{-1} I z \cap I y I\right)|
\end{align}
In particular, the set of double cosets $I z I$ such that 
\begin{align}
I \backslash \left(I  x^{-1} I z \cap I y I\right) \neq \emptyset
\end{align}
is finite.
\end{Proposition}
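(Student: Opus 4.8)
The plan is to evaluate the convolution directly from its integral definition and then translate the resulting volume into the coset count in the statement. Normalize the Haar measure on the (unimodular) group $G$ so that $I$ has measure $1$, and recall that convolution is given by $(T_x * T_y)(h) = \int_G T_x(k)\, T_y(k^{-1}h)\, dk$. Since $T_x$ and $T_y$ are $I$-biinvariant, so is $T_x * T_y$; hence it is constant on each double coset $IzI$, and that constant is exactly $a^z_{x,y}$. Thus it suffices to evaluate $a^z_{x,y} = (T_x * T_y)(z) = \mathrm{vol}\{ k \in G \mid k \in IxI,\ k^{-1}z \in IyI \}$.

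First I would decompose $IxI$ into its left $I$-cosets, $IxI = \bigsqcup_j k_j I$, each of measure $1$ by left-invariance. The crucial observation is that the defining condition $k^{-1}z \in IyI$ is constant along each coset $k_j I$: writing $k = k_j i$ with $i \in I$ gives $k^{-1}z = i^{-1}(k_j^{-1}z)$, and left multiplication by $i^{-1} \in I$ preserves membership in the left-$I$-invariant set $IyI$. Consequently the volume equals the number of cosets $k_j I \subset IxI$ with $k_j^{-1}z \in IyI$, each contributing measure $1$.

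Next I would rewrite this count in the asserted form. Substituting $g = k_j^{-1}$ carries the left cosets $k_j I \subset IxI$ bijectively onto the right cosets $Ig \subset Ix^{-1}I$ (using $(IxI)^{-1} = Ix^{-1}I$), and the condition becomes $gz \in IyI$, equivalently $Igz \subset IyI$ since $IyI$ is left-$I$-invariant. The map $Ig \mapsto Igz$ is then a bijection from the right cosets of $Ix^{-1}I$ onto the right cosets of $Ix^{-1}Iz$, under which the cosets being counted correspond precisely to those right cosets lying in $Ix^{-1}Iz \cap IyI$. This gives $a^z_{x,y} = |I\backslash(Ix^{-1}Iz \cap IyI)|$, as claimed.

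Finally, for the finiteness assertion I would show that the support of $T_x * T_y$ is compact. The decomposition $IxI = \bigsqcup_j k_j I$ has finitely many terms because $I \cap xIx^{-1}$ is an open subgroup of the compact group $I$, hence of finite index; therefore $(IxI)(IyI) = \bigcup_j k_j\, IyI$ is a finite union of left translates of the compact set $IyI$ and is compact. As $I$ is open, every double coset $IzI$ is open, and these cosets partition $G$, so a compact set meets only finitely many of them. Since the support of $T_x*T_y$ is contained in $(IxI)(IyI)$, only finitely many $a^z_{x,y}$ are nonzero, which is exactly the last claim. The only genuinely delicate point in the argument is the bookkeeping of the second and third steps — correctly tracking the passage from left $I$-cosets to right $I$-cosets under inversion and the subsequent $z$-translation — resting on the standard but essential finite-index fact that $[I : I \cap xIx^{-1}] < \infty$.
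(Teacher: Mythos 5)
Your argument is correct and is exactly the standard $p$-adic integration computation that the paper has in mind: the paper gives no proof of its own, simply calling the statement ``an easy exercise in $p$-adic integration'' and citing Iwahori--Matsumoto, and your write-up (evaluating the convolution integral, decomposing $IxI$ into finitely many left $I$-cosets of measure $1$, and converting to right cosets via inversion and right translation by $z$) fills in precisely that exercise. Both the coset bookkeeping and the compactness argument for finiteness, via $[I : I \cap xIx^{-1}] < \infty$ and the openness of the double cosets $IzI$, are sound.
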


When $\bG$ is of affine type, one uses \eqref{eq:structure-coeffs} as the definition of the convolution product. However, to obtain a well-defined multiplication, one needs to restrict to functions supported on $G^+$.
\begin{Definition}
Let $\bG$ be an untwisted affine Kac-Moody group, and let $G$ be the corresponding $p$-adic group. Then the \emph{Iwahori-Hecke algebra (for the  $p$-adic loop group $G$)} $\cH(G^+,I)$ is the vector space of complex-valued functions on $G^+$ that are supported on finitely-many double cosets.   
\end{Definition}

For all $x \in \cW_\cT$, let $T_x$ be the indicator function of $I x I$. Then it is clear that 
\begin{align}
  \label{eq:double-coset-basis}
  \{T_x \mid x \in \cW_\cT \}
\end{align}
is a basis for $\cH(G^+,I)$. We call this the \emph{double coset basis} of $\cH(G^+,I)$.

One of the main results of \cite{bkp} is the following theorem, which says that $\cH(G^+,I)$ has an algebra structure coming from convolution.
\begin{Theorem}{\cite[Theorem 5.2.1]{bkp}}
  \label{thm:loop-group-iwahori-hecke-algebra} Let $\bG$ be an untwisted affine Kac-Moody group, and let $x, y \in \cW_\cT$. Then for all $z \in \cW_\cT$, the set
\begin{align}
I \backslash \left(I  x^{-1} I z \cap I y I\right)
\end{align}
is finite. Let $a^z_{x,y}$ be the cardinality of this set. For all but finitely many $z \in \cW_\cT$, we have $a^z_{x,y}=0$, and the formula
\begin{align}
  \label{eq:iwahori-hecke-algebra-affine-type}
  T_x T_y = \sum_{z \in \cW} a^z_{x,y} T_z
\end{align}
defines an associative algebra structure on $\cH(G^+,I)$.
\end{Theorem}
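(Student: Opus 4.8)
The plan is to establish, in order, (i) that each count $a^z_{x,y} = |I\backslash(Ix^{-1}Iz \cap IyI)|$ is finite, (ii) that $a^z_{x,y} = 0$ for all but finitely many $z \in \cW_\cT$, and (iii) that the resulting product is associative. The reduction of the convolution to these coset counts is already recorded in the finite-type Proposition above, so the entire substance lies in controlling left $I$-cosets; the workhorse throughout will be the Iwahori factorization $I = U^+_\cO \cdot U^-_\pi \cdot A_\cO$ together with Lemma \ref{lem:UplusUminusK}.

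First I would parametrize the left $I$-cosets inside a fixed double coset. Writing $y = w\pi^\mu$, the factorization lets one describe the cosets of $IyI$ by unipotent representatives, roughly by $U^+_\cO$ modulo $U^+_\cO \cap (yU^+_\cO y^{-1})$. In finite type this index set is finite, but in the affine loop case it need not be: conjugation by $\pi^\mu$ rescales the root subgroups attached to the imaginary roots (the multiples of $\delta^\vee$) by powers of $\pi$ governed by $\langle\mu,\delta^\vee\rangle = \mathrm{level}(\mu)$, and because there are infinitely many imaginary root subgroups, a single $IyI$ can split into infinitely many left cosets. This is exactly the failure of local finiteness that the restriction to $G^+$ is designed to repair. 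To prove (i) I would therefore impose the second condition $gz^{-1} \in Ix^{-1}I$ on a representative $g$ of such a coset and show it is solvable for only finitely many parameters: the intersection lies in $G^+$ because $IyI \subseteq G^+$ and $G^+$ is a semi-group, so the imaginary direction that caused the blow-up is pinned down, and Lemma \ref{lem:UplusUminusK} is what converts the $F$-point membership coming from $Ix^{-1}I$ into an $\cO$-integral condition that can be met only finitely often.

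For (ii) I would argue by a grading-plus-bound estimate. The support of $T_xT_y$ is contained in $IxI\cdot IyI \subseteq G^+$, and the level map $\mu \mapsto \langle\mu,\delta^\vee\rangle$ is additive on $\cW_\cT$ (since $\delta^\vee$ is $W$-invariant), so every $z$ that occurs satisfies $\mathrm{level}(z) = \mathrm{level}(x)+\mathrm{level}(y)$. Supplementing this with a height bound on the finite-type part of $z$ — obtained by bounding how far the cosets of the product can travel in the dominance order, again using that the product stays in $G^+$ — confines $z$ to a finite subset of $\cW_\cT$. Associativity (iii) is then the most formal step: once (i) and (ii) make each pairwise product an honest finite sum supported in $G^+$, both $(T_xT_y)T_v$ and $T_x(T_yT_v)$ expand into finite sums, and the two $z$-coefficients each count, in two ways, the same set of chains of composable left $I$-cosets interpolating between representatives of $IxI$, $IyI$, and $IvI$; a Fubini interchange, legitimate because every intermediate sum is finite, identifies the two counts.

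I expect the finiteness in step (i) to be the \emph{main obstacle}. It is precisely here that the finite-type argument — where local compactness guarantees each double coset is a finite union of left cosets — has no counterpart, and where the semi-group property of $G^+$ must be used in an essential way to tame the imaginary direction. Steps (ii) and (iii) are comparatively routine once the individual coset counts are known to be finite.
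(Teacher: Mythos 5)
First, a point of order: the paper does not prove this theorem. It is imported verbatim from Braverman--Kazhdan--Patnaik as \cite[Theorem 5.2.1]{bkp}, so there is no in-paper argument to compare yours against; any proof has to be measured against the (long, technical) argument in \cite{bkp} itself. On that standard, your outline correctly identifies the relevant ingredients --- the Iwahori factorization, the semigroup property of $G^+$, Lemma \ref{lem:UplusUminusK}, additivity of the level --- and you are right both that $I\backslash IyI$ is infinite for positive-level $y$ (so the finite-type argument has no counterpart) and that step (iii) is formal once (i) and (ii) are established.

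The genuine gap is that both finiteness claims, which are the entire content of the theorem, are asserted rather than proved. In (i), ``the imaginary direction is pinned down'' and ``can be met only finitely often'' restate the goal: you need an actual mechanism showing that the condition $gz^{-1}\in Ix^{-1}I$ excludes all but finitely many of the infinitely many left cosets of $IyI$, and Lemma \ref{lem:UplusUminusK} alone does not produce one. In (ii), fixing the level does not come close to finiteness --- the level-$k$ stratum of $\cW_\cT$ is $W\ltimes P_k$, infinite in both factors --- and the ``height bound on the finite-type part'' that you invoke is precisely the hard estimate. In \cite{bkp} it rests on the affine Cartan-semigroup results of \cite{bk} and \cite{bgkp} (the containment $K\pi^\lambda K\cdot K\pi^\mu K\subseteq\bigcup_{\nu\leq\lambda+\mu}K\pi^\nu K$ and its Iwahori refinement), which your sketch presupposes rather than derives. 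As written, the proposal is a plausible roadmap, not a proof.
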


\subsection{Various versions of the Double Affine Hecke Algebra.}

\subsubsection{Coxeter-Hecke Algebras}

Let $W$ be a Coxeter group with simple reflections $\{ s_i \mid i \in I \}$ where $I$ is some indexing set. To $W$ we can associate a corresponding Hecke algebra $\cH_W$, which is the algebra over $R = \CC[v,v^{-1}]$ generated by symbols $T_w$ for $w \in W$ subject to the following relations.

\begin{itemize}
\item  $T_{w_1}T_{w_2} = T_{w_1 w_2}$ if $\ell(w_1 w_2) = \ell(w_1) + \ell(w_2)$ where $\ell$ is the usual length function on a Coxeter group.
  \item $(T_{s_i} +1) (T_{s_i} - v^2) = 1$ for all simple reflections $i \in I$.
\end{itemize}
We will follow the usual convention and write $T_i$ for $T_{s_i}$ when $i \in I$.

\newcommand{\HH}{\mathbb{H}}

\subsubsection{The Garland-Gronowski DAHA}
Let $R = \CC[v,v^{-1}]$. Consider the following $R$-module.
\begin{align}
\HH = \cH_W \otimes_R R[P]
\end{align}
For $\mu \in P$, let us write $\Theta_\mu$ for the element $1 \otimes \mu \in \HH$.

Then following Garland-Gronowski \cite{gg} and \cite[Section 5.1]{bkp} we give an algebra structure to $\HH$ by requiring that
\begin{itemize}
\item $\cH_W \otimes 1$ be a copy of the Coxeter-Hecke algebra,
\item $1 \otimes R[P]$ be a copy of the group algebra $R[P]$,
\item the Bernstein relation:
  \begin{align}
    \label{eqn:bernstein-relation}
    T_i \Theta_\mu - \Theta_{s_i(\mu)}T_i = (v^{-2} - 1) \frac{\Theta_{\mu} -\Theta_{s_i(\mu)}}{1 - \Theta_{-\alpha_i}}
  \end{align}
\end{itemize}

When $\bG$ is affine, $\HH$ carries a natural $\ZZ$ grading where $\cH_W$ has degree $0$, and $\deg \Theta_\mu = \text{level}(\mu)$.

\subsubsection{Cherednik's DAHA and Tits DAHA}{\label{sec:cherednik-daha}}

The subspace $\HH_Q = \cH_W \otimes_R R[Q] \subset \HH$ is a subalgebra. 

When $\bG$ is untwisted affine, then $\HH_Q=\HH_0$ (the degree-$0$ part of $\HH$ under the level grading) is naturally isomorphic to Cherednik's double affine Hecke algerbra \cite{cher} (Cherednik's parameter $t$ corresponds to $v^{-2}$, and the parameter $q$ corresponds to the central element $\Theta_\delta$).

We can also form the subalgebra
\begin{align}
  \label{eq:tits-daha}
  \HH_\cT = \cH_W \otimes_R R[\cT]
\end{align}
We propose that when $\bG$ is affine that this algebra be called the \emph{Tits DAHA}.

\subsubsection{The relationship with $\cH(G^+,I)$.}

The following result is due to Braverman, Kazhdan, and Patnaik.
\begin{Theorem}{\cite[Theorem 5.34]{bkp}}
 When $\bG$ is affine, there is an algebra isomorphism 
between the Tits DAHA specialized at $v = q^{-1/2}$ and the $p$-adic loop group Iwahori-Hecke algebra.
\begin{align}
\phi : {\HH_\cT}|_{v = q^{-1/2}} \rightarrow \cH(G^+,I)
\end{align}
\end{Theorem}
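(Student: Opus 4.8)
The plan is to construct $\phi$ directly on generators and then prove it is both a well-defined algebra map and a bijection, following the template of the Bernstein presentation in finite type but taking care that every coset one manipulates stays inside $G^+$. First I would fix the images of the generators of $\HH_\cT = \cH_W \otimes_R R[\cT]$: send each $T_i$ to the indicator function of $I s_i I \subset G^+$, and send $\Theta_\mu$, for $\mu \in P^{++}$ dominant, to a fixed nonzero scalar multiple of the indicator function of $I \pi^\mu I$. Since $\cT = \bigcup_{w \in W} w(P^{++})$, every $\mu \in \cT$ is $W$-conjugate to a dominant coweight, so once the images of the $T_i$ and of the dominant $\Theta_\mu$ are fixed, the Bernstein relation \eqref{eqn:bernstein-relation} forces the image of every remaining $\Theta_\mu$; the work is to check that this forcing is consistent and lands in $\cH(G^+,I)$.

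Second I would verify that these assignments respect the three families of relations defining $\HH_\cT$. The braid and quadratic relations among the $T_i$ reduce, via the Iwahori factorization and Lemma \ref{lem:UplusUminusK}, to the classical rank-one computation inside $\phi_i(\mathbf{SL_2})$, exactly as in Iwahori--Matsumoto; the specialization $v = q^{-1/2}$ is precisely what makes the resulting quadratic relation the one satisfied by the indicator function of $I s_i I$ (eigenvalues $q$ and $-1$, since $v^{-2} = q$). The only new point relative to finite type is that one must check the intermediate cosets lie in $G^+$, which holds because $G^+$ is a sub-semi-group of $G$. The commutativity $\Theta_\mu \Theta_\nu = \Theta_{\mu+\nu}$ reduces, for dominant $\mu,\nu$, to the statement that $I \pi^\mu I \cdot I \pi^\nu I$ is the single double coset $I \pi^{\mu+\nu} I$, i.e. to length-additivity of dominant translations; by Theorem \ref{thm:loop-group-iwahori-hecke-algebra} the convolution is well defined, and in this length-additive situation the unique nonzero structure coefficient equals $1$.

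Third --- and this is where I expect the main obstacle --- I would establish the Bernstein relation \eqref{eqn:bernstein-relation} for the images. As in finite type the identity is verified one simple reflection at a time, and for fixed $i$ the Iwahori factorization localizes the relevant convolution to the rank-one group $\phi_i(\mathbf{SL_2})$ together with the torus $A_\cO$, so that a priori infinite-dimensional integrals collapse to a finite-dimensional computation. The genuinely new difficulty is that the affine simple root direction is now involved, so one must check that the geometric series produced by integrating over the $\cO$-points of $x_{\alpha_i}$ converges to the rational expression $(v^{-2}-1)(\Theta_\mu - \Theta_{s_i(\mu)})/(1 - \Theta_{-\alpha_i})$ on the right-hand side; finiteness of every coefficient that appears is guaranteed by Theorem \ref{thm:loop-group-iwahori-hecke-algebra}, and matching the two sides is the technical heart of the proof.

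Finally I would prove $\phi$ is bijective by comparing bases. On the source, the elements $\{ T_w \Theta_\mu \mid w \in W,\ \mu \in \cT \}$ form an $R$-basis of $\HH_\cT$ (a PBW/triangularity statement that itself follows from the Bernstein relation), and this set is indexed by $\cW_\cT = W \ltimes \cT$, which is exactly the index set of the double-coset basis $\{T_x\}$ from \eqref{eq:double-coset-basis}. I would show that $\phi(T_w \Theta_\mu)$ equals a nonzero multiple of $T_{w\pi^\mu}$ plus a combination of basis elements $T_x$ with $x$ strictly smaller in a suitable length/support order; unravelling the Bernstein relation for non-dominant $\mu$ produces exactly such lower-order correction terms. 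This unitriangularity makes $\phi$ simultaneously injective and surjective, hence an isomorphism, completing the proof.
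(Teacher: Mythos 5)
This statement is not proved in the paper at all: it is quoted verbatim from Braverman--Kazhdan--Patnaik \cite[Theorem 5.34]{bkp}, and the author only recalls the properties of $\phi$ that are needed later (namely $\phi(T_w)=T_w$ and $\phi(\Theta_\lambda)=q^{\langle\rho^\vee,\lambda\rangle}T_{\pi^\lambda}$ for dominant $\lambda$). So there is no in-paper proof to compare against; your proposal has to be judged against the actual argument in \cite{bkp}.

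Your outline is structurally reasonable and close in spirit to what BKP do, but it has a genuine gap exactly where you flag ``the technical heart'': the verification of the Bernstein relation \eqref{eqn:bernstein-relation}. You assert that the Iwahori factorization ``localizes the relevant convolution to the rank-one group $\phi_i(\mathbf{SL_2})$ together with the torus'' so that the computation becomes finite-dimensional. That is not what happens in \cite{bkp}, and it is not clear it can be made to happen: for the affine node the double coset $I\pi^\mu I$ meets infinitely many cosets $I\pi^\nu w I$ under convolution with rank-one data, and the finiteness and evaluation of the resulting coefficients is precisely the content of the affine Gindikin--Karpelevich-type computations of \cite{bgkp} and of \cite[Section 5]{bkp}; it cannot be reduced to a classical $\mathbf{SL_2}$ calculation plus ``convergence of a geometric series.'' Leaving this as an acknowledged placeholder means the proof is not complete. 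Two smaller issues: (i) your map is defined out of an abstractly presented algebra, so you implicitly need that the braid, quadratic, lattice, and Bernstein relations are \emph{defining} relations for $\HH_\cT$ (a PBW-type theorem for the source), which you use in the last step but do not establish; BKP avoid this by constructing the elements $\Theta_\mu$ \emph{inside} $\cH(G^+,I)$ and proving the relations and the basis theorem there, then reading off the presentation. (ii) The claim $T_{\pi^\mu}T_{\pi^\nu}=T_{\pi^{\mu+\nu}}$ for dominant $\mu,\nu$ with structure constant exactly $1$ is asserted rather than proved; it needs an Iwahori-factorization argument of the same kind as Theorem \ref{thm:generalized-iwahori-matsumoto-formula}.
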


We recall the main properties of this isomorphism. First, for $w \in W$, we have $\phi(T_w) = T_w$. Second, for $\lambda$ dominant $\phi(\Theta_\lambda) = q^{\langle \rho^\vee, \lambda \rangle} T_{\pi^\lambda}$.
Finally, for general $\mu \in \cT$ the authors provide an explicit algorithm for writing $\phi(\Theta_\mu)$ in terms of the double coset basis with coefficients that are Laurent polynomials in $q$ with integer coefficients (see \cite[Section 6.2]{bkp}).

\subsection{Preorders and partial order.}

Recall that a \emph{preorder} on a set $X$ is a binary relation $\leq$ on $X$ satisfying the following properties.
\begin{itemize}
\item For all $x \in X$, $x \leq x$.
\item If $x \leq y$ and $y \leq z$, then $x \leq z$.
\end{itemize}
We write $x < y$ to mean $x \leq y$ and $x \neq y$.
We furthermore say that $\leq$ is a \emph{partial order} if the following property holds: suppose $x,y\in X$ are such that $x\leq y$ and $y \leq x$, then $x = y$.

Suppose $X$ and $Y$ are both preordered sets. Then we say that a map 
\begin{align}
 \ell : X \rightarrow Y
\end{align}
is a \emph{grading} if 
\begin{align}
  \ell(x_1) < \ell(x_2) \text{ whenever } x_1 < x_2
\end{align} 
We then have the following lemma.
\begin{Lemma}
 Suppose that $X$ is a preordered set, that $Y$ is a partially ordered set, and that $ \ell : X \rightarrow Y$ is a grading. Then the preorder on $X$ is a partial order.
\end{Lemma}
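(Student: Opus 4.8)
The plan is to prove antisymmetry directly, since the preorder $\leq$ on $X$ is already reflexive and transitive by hypothesis, so antisymmetry is the only axiom that needs checking to upgrade it to a partial order. Concretely, I would take $x, y \in X$ with $x \leq y$ and $y \leq x$, and argue by contradiction that $x = y$.

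First I would assume $x \neq y$. Combined with $x \leq y$ this gives $x < y$, and combined with $y \leq x$ it gives $y < x$, by the very definition of the strict relation $<$ (namely $\leq$ together with inequality). Next I would feed both strict inequalities through the grading hypothesis: since $\ell$ is a grading, $x < y$ forces $\ell(x) < \ell(y)$ in $Y$, and $y < x$ forces $\ell(y) < \ell(x)$ in $Y$. In particular $\ell(x) \leq \ell(y)$ and $\ell(y) \leq \ell(x)$, while also $\ell(x) \neq \ell(y)$ (as $\ell(x) < \ell(y)$ is strict).

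Now I would invoke the hypothesis that $Y$ is a \emph{partial} order. Applying antisymmetry in $Y$ to $\ell(x) \leq \ell(y)$ and $\ell(y) \leq \ell(x)$ yields $\ell(x) = \ell(y)$, which contradicts $\ell(x) \neq \ell(y)$. Hence the assumption $x \neq y$ was untenable, so $x = y$, which is exactly antisymmetry for $\leq$ on $X$. Together with reflexivity and transitivity (inherited from the preorder structure), this shows $\leq$ is a partial order.

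This argument is a pure unwinding of definitions, so there is no genuine obstacle; the only point requiring a moment's care is keeping the strict and non-strict relations straight and making sure the grading condition is used in the correct direction (strict below implies strict above), so that the contradiction lands on the antisymmetry axiom of $Y$ rather than on anything about $X$. I would present it as a short one-paragraph proof by contradiction.
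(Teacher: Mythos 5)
Your proof is correct: the paper states this lemma without proof precisely because it is the routine definition-unwinding argument you give, and your handling of the strict relation (using $x<y$ to mean $x\leq y$ and $x\neq y$, then invoking the grading and antisymmetry in $Y$) is exactly right. One tiny simplification worth noting is that from $\ell(x)<\ell(y)$ and $\ell(y)<\ell(x)$ you could also conclude directly by transitivity that $\ell(x)<\ell(x)$, contradicting $\ell(x)\neq\ell(x)$, without needing to pass through antisymmetry in $Y$ --- but your version is equally valid.
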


\section{The double coset basis}{\label{sec:the-double-coset-basis}}

The algebra $\cH(G^+,I)$ has two natural bases; there is the ``Bernstein basis'' $\{\Theta_\mu T_w \mid \pi^\mu w \in W_\cT \}$ and the double coset basis $\{T_{\pi^\mu w} \mid \pi^\mu w \in W_\cT \}$. 
Using the Bernstein relation \eqref{eqn:bernstein-relation}, it is easy to see that the structure coefficients of the Bernstein basis are Laurent polynomials in $q$. Furthermore, Braverman, Kazhdan and Patnaik \cite[Section 6.2]{bkp} provides an algorithm to write the Bernstein basis in terms of the double coset basis. 
From this algorithm, one can see that the coefficients of the Bernstein basis vectors, when written in the double coset basis, are Laurent polynomials in $q$. 
One of the results of this section is an inverse algorithm. We will develop the double coset basis combinatorially, and as a consequence we will see that the coefficients of the double coset basis when written in the Bernstein basis are Laurent polynomials in $q$. As a corollary, we see that the structure coefficients of the double coset basis are Laurent polynomials in $q$. Because these structure coefficients are known to be integers for all $q$ that are prime powers, we can conclude that the structure coefficients are in fact ordinary polynomials in $q$.

\subsection{The Iwahori-Matsumoto relation}

In $\cH(G^+,I)$, we have the following relations.

\begin{Theorem}\label{thm:generalized-iwahori-matsumoto-formula}
Let $\mu \in \cT$ be a Tits coweight, $w \in W$ be an element of the single affine Weyl group, and let $i \in I$ be a node of the single affine Dynkin diagram. Then:

\begin{align}{\label{eqn:generalized-iwahori-matsumoto-formula}}
T_{\pi^{\mu} w s_i} = \left\{
\begin{array}{c l}      
   T_{\pi^\mu w} T_i \text{ if } \langle \mu, w(\alpha_i) \rangle > 0 \text{ or if }   \langle \mu, w(\alpha_i) \rangle = 0 \text{ and } w(\alpha_i) > 0\\
    T_{\pi^\mu w} T_i^{-1}  \text{ if } \langle \mu, w(\alpha_i) \rangle < 0 \text{ or if }   \langle \mu, w(\alpha_i) \rangle = 0 \text{ and } w(\alpha_i) < 0
\end{array}\right.
\end{align}

\end{Theorem}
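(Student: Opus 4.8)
The plan is to compute the structure coefficients of $T_{\pi^\mu w}T_i$ directly from the convolution formula of Theorem \ref{thm:loop-group-iwahori-hecke-algebra}, working inside $G$ via the Iwahori factorization. Write $x = \pi^\mu w$, set $\beta = w(\alpha_i)$, and let $n = \langle \mu, \beta\rangle$; the two cases of \eqref{eqn:generalized-iwahori-matsumoto-formula} are governed by the sign of $n$ and, when $n=0$, by the sign of $\beta$. Before treating the length-increasing case I would first reduce the second line of \eqref{eqn:generalized-iwahori-matsumoto-formula} to the first. The element $T_i$ is invertible in $\cH(G^+,I)$ (its preimage under $\phi$ satisfies the quadratic Hecke relation in the Tits DAHA), so it suffices to prove the top case for all inputs: applying it to $x' = \pi^\mu w s_i = \pi^\mu w'$, whose relevant root is $w'(\alpha_i) = -\beta$, turns the hypothesis $\langle\mu,\beta\rangle < 0$, or $\langle\mu,\beta\rangle = 0$ and $\beta < 0$, into the length-increasing hypothesis for $x'$. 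This yields $T_{x'}T_i = T_{x's_i} = T_x$, hence $T_{xs_i} = T_{x'} = T_x T_i^{-1}$. Everything therefore reduces to showing $T_x T_i = T_{xs_i}$ whenever $n > 0$, or $n=0$ and $\beta > 0$.

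For this, I would use the decomposition of $I s_i I$ into $q$ right cosets, with representatives $h_t = \dot s_i\, x_{\alpha_i}(\bar t)$ for $t \in k$ (where $\bar t \in \cO$ lifts $t$), coming from the rank-one Iwahori factorization in the $SL_2$ attached to $\alpha_i$. The convolution formula then reads $(T_x T_i)(g) = \#\{\, t \in k : g\, h_t^{-1} \in I x I\,\}$, a finite count bounded by $q$. The essential input is the conjugation rule $x\, x_{\gamma}(r)\, x^{-1} = x_{w(\gamma)}\big(\pi^{\langle \mu, w(\gamma)\rangle} c\, r\big)$ for a real root $\gamma$ and a unit $c$, which records exactly how $\pi^\mu$ rescales root-group parameters by powers of $\pi$ according to the pairing with $\mu$. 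Taking $g = x s_i$ and $\gamma = -\alpha_i$ one finds $g\, h_t^{-1} = x\, x_{-\alpha_i}(c\bar t) = x_{-\beta}(\pi^{-n} c'\bar t)\, x$; the term $t = 0$ gives $x \in IxI$, contributing $1$.

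The crux is to show that no other $t$ contributes and that no other double coset appears, i.e. that $(T_x T_i)(x s_i) = 1$ and $(T_x T_i)(x) = 0$. First, the support of $T_x T_i$ lies in $IxI \cup I x s_i I$: this is the rank-one Bruhat relation $IxI \cdot I s_i I \subseteq IxI \cup Ixs_iI$, which I would obtain from the same right-coset decomposition localized to the $\alpha_i$-$SL_2$. Granting this, only $g = x$ and $g = xs_i$ need examination. For $t \neq 0$ one has $\mathrm{val}(\pi^{-n}c'\bar t) = -n$, and I must verify that $x_{-\beta}(\pi^{-n}c'\bar t)\, x \notin IxI$: this is where the hypotheses enter. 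The condition $n > 0$ (resp. $n = 0$ and $\beta > 0$, forcing $-\beta < 0$) makes the parameter too $\pi$-small to be absorbed, since membership $x_{-\beta}(a)\, x \in IxI$ is equivalent to $x_{-\beta}(a) \in I\cdot(xIx^{-1})$, and unwinding this through the Iwahori factorizations of $I$ and $xIx^{-1}$ reduces it to a single rank-one membership question controlled by $\mathrm{val}(a)$ together with the sign of $-\beta$. The bound from the length-increasing hypothesis excludes every $t \neq 0$. An entirely parallel computation at $g = x$, where $t=0$ yields $x\dot s_i^{-1} \in Ixs_iI \neq IxI$, shows the coefficient of $T_x$ vanishes.

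The main obstacle I anticipate is precisely this last step: the careful $p$-adic and root-group bookkeeping needed to turn the inclusion $x_{-\beta}(a)\, x \in IxI$ into a clean valuation inequality, and in particular the boundary case $n = 0$, where membership is decided not by the valuation alone but by the sign of $\beta$. This is exactly the point at which the Iwahori factorization must be invoked to isolate the single root direction $\beta$, and where the passage beyond finite type requires checking that the relevant factors genuinely lie in $\cO$ rather than merely in $F$, as guaranteed by Lemma \ref{lem:UplusUminusK} and the affine Iwahori factorization.
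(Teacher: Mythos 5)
Your proposal is correct and follows essentially the same route as the paper: both arguments come down to conjugating the rank-one root subgroup $x_{\pm\alpha_i}(\cO)$ through $\pi^{\mu}w$, reading off the resulting power of $\pi$ from $\langle \mu, w(\alpha_i)\rangle$, and settling a single rank-one membership question via the Iwahori factorization together with Lemma~\ref{lem:UplusUminusK} and the Steinberg relations~\eqref{eq:steinberg-relations}. The one organizational difference is that the paper first proves the exact coset identity $I\pi^{\mu}wIs_iI = I\pi^{\mu}ws_iI$, which follows from the \emph{easy} containment $x_{w(\alpha_i)}\bigl(\pi^{\langle\mu,w(\alpha_i)\rangle}\cO\bigr)\subseteq I$ under the first hypothesis and immediately confines the support of $T_{\pi^{\mu}w}T_i$ to a single double coset; this disposes of your coefficient at $g=x$ for free, whereas your route only bounds the support by two cosets and so must run the delicate non-membership argument a second time there. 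The step you flag as the main obstacle --- showing $x_{-\beta}(a)\,\pi^{\mu}w\notin I\pi^{\mu}wI$ when $a$ has valuation $-\langle\mu,\beta\rangle$ and $t\neq 0$ --- is indeed the entire content of the paper's proof that the constant equals $1$: it is carried out by factoring $u_+$ and $u_-$ according to whether their conjugates by $w$ and $ws_i$ land in $U^+$ or $U^-$, peeling off the single $x_{-w(\alpha_i)}$-direction, and commuting the remaining factors past it using the Steinberg relations, which is exactly the reduction you describe; so your sketch would go through, at the cost of executing that bookkeeping twice.
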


\begin{Proposition}
Let us suppose the setup of the above theorem.  

If  $\langle \mu, w(\alpha_i) \rangle > 0$ or if $\langle \mu, w(\alpha_i) \rangle = 0 \text{ and } w(\alpha_i) > 0$, then 
\begin{align}
I \pi^{\mu} w I s_i I = I \pi^{\mu} w s_i I
\end{align}
If $\langle \mu, w(\alpha_i) \rangle < 0$ or if  $\langle \mu, w(\alpha_i) \rangle = 0 \text{ and } w(\alpha_i) < 0$
\begin{align}
I \pi^{\mu} w s_i I s_i I = I \pi^{\mu} w I
\end{align}
\end{Proposition}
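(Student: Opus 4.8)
The plan is to prove the first identity (the ``length-increasing'' case) and to deduce the second from it. Writing $v = w s_i$, the second identity reads $I \pi^\mu v I s_i I = I \pi^\mu v s_i I$ (note $\pi^\mu v s_i = \pi^\mu w$, the lift-ambiguity being absorbed by $A_\cO \subseteq I$), and since $w(\alpha_i) = -v(\alpha_i)$ the hypothesis $\langle \mu, w(\alpha_i)\rangle < 0$ (resp. $=0$ and $w(\alpha_i) < 0$) is exactly the hypothesis $\langle \mu, v(\alpha_i)\rangle > 0$ (resp. $=0$ and $v(\alpha_i) > 0$) of the first identity. So it suffices to show: under the first-case hypothesis, $I \pi^\mu w I s_i I = I \pi^\mu w s_i I$. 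The inclusion $\supseteq$ is immediate, since $\pi^\mu w s_i = (\pi^\mu w)\,s_i \in (I \pi^\mu w I)(I s_i I) = I \pi^\mu w I s_i I$. For the reverse inclusion it is enough to prove $\pi^\mu w\, I\, s_i I \subseteq I \pi^\mu w s_i I$, because left-multiplying by $I$ and using $I\cdot I = I$ then yields $I \pi^\mu w I s_i I \subseteq I \pi^\mu w s_i I$.

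The key input is a rank-one decomposition of $I$ relative to $s_i$, extracted from the Iwahori factorization $I = U^+_\cO U^-_\pi A_\cO$. Writing $x_{\alpha_i}$ for the affine root subgroup of the simple root $\alpha_i$ and setting $J = I \cap s_i I s_i^{-1}$, one has $I = x_{\alpha_i}(\cO)\cdot J$ with $s_i^{-1} J s_i \subseteq I$ by construction. The reason is that among the positive affine real roots only $\alpha_i$ is sent to a negative root by $s_i$, so the discrepancy between $I$ and $s_i I s_i^{-1}$ is concentrated in the single root subgroup $x_{\alpha_i}$. Granting this, and using $J s_i I \subseteq s_i(s_i^{-1} J s_i) I \subseteq s_i I$, I compute
\begin{align}
\pi^\mu w\, I\, s_i I = \pi^\mu w\, x_{\alpha_i}(\cO)\, J\, s_i I \subseteq \pi^\mu w\, x_{\alpha_i}(\cO)\, s_i I.
\end{align}

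Finally I conjugate the root subgroup past $\pi^\mu w$, using that $w$ carries $x_{\alpha_i}(a)$ to $x_{w(\alpha_i)}(\pm a)$ and that $\pi^\mu$ rescales the coefficient by $\pi^{\langle \mu, w(\alpha_i)\rangle}$:
\begin{align}
\pi^\mu w\, x_{\alpha_i}(\cO)\, (\pi^\mu w)^{-1} = x_{w(\alpha_i)}\!\left(\pm\,\pi^{\langle \mu, w(\alpha_i)\rangle}\cO\right).
\end{align}
Here the hypothesis enters decisively. If $w(\alpha_i) \in \Delta_+$ then $x_{w(\alpha_i)}(c\cO) \subseteq U^+_\cO \subseteq I$ as soon as $\langle \mu, w(\alpha_i)\rangle \geq 0$; if instead $w(\alpha_i)\in\Delta_-$ then $I$ contains only $x_{w(\alpha_i)}(\pi\cO)$, so we need $\langle \mu, w(\alpha_i)\rangle \geq 1$. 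These two requirements are precisely the two clauses of the first-case hypothesis, so the displayed conjugate lies in $I$, giving $\pi^\mu w\, x_{\alpha_i}(\cO)\, s_i I \subseteq I \pi^\mu w s_i I$ and completing the argument. (As before, all sign ambiguities from the Weyl lifts land in $A_\cO \subseteq I$ and are irrelevant at the level of double cosets.)

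I expect the main obstacle to be establishing the rank-one decomposition $I = x_{\alpha_i}(\cO)\,J$ with $s_i^{-1} J s_i \subseteq I$ rigorously in the infinite-dimensional Kac-Moody setting, where one cannot simply invoke the finite $BN$-pair axioms; this is exactly where careful bookkeeping with the Iwahori factorization is required, including controlling the infinitely many positive real root subgroups and verifying that only the $\alpha_i$-component distinguishes $I$ from $s_i I s_i^{-1}$. The secondary delicate point is the valuation computation in the conjugation step, which is what pins the dividing line exactly at the stated inequalities; the interplay between the sign of $w(\alpha_i)$ and the sign of $\langle \mu, w(\alpha_i)\rangle$ should account for every boundary case.
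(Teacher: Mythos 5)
Your proof is correct and follows essentially the same route as the paper: both arguments rest on the rank-one decomposition $I s_i I = x_{\alpha_i}(\cO)\, s_i I$ coming from the Iwahori factorization, followed by conjugating the root subgroup past $\pi^\mu w$ and checking that the resulting $x_{w(\alpha_i)}\bigl(\pi^{\langle \mu, w(\alpha_i)\rangle}\cO\bigr)$ lands in $I$ exactly under the stated sign conditions. The only cosmetic difference is that you deduce the second identity from the first via $v = w s_i$, whereas the paper runs the symmetric computation directly.
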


\begin{proof}
For the first equation, we calculate:

\begin{align}
I \pi^\mu w I s_i I = I \pi^\mu w \cdot x_{\alpha_i}(\cO) \cdot s_i I = I \pi^\mu x_{ w(\alpha_i)}(\cO) w s_i I = \\ I  x_{ w(\alpha_i)}( \pi^{\langle \mu, w \alpha_i \rangle} \cO) \pi^\mu w s_i I = I \pi^\mu w s_i I
\end{align}

The first equality comes from the Iwahori factorization and Bruhat decompositions. The last equality follows because of the assumption that $\langle \mu, w(\alpha_i) \rangle > 0$ or $\langle \mu, w(\alpha_i) \rangle = 0 \text{ and } w(\alpha_i) > 0$ .

For the second equation, we calculate:

\begin{align}
I \pi^{\mu} w s_i I s_i I =I \pi^{\mu} w s_i x_{\alpha_i}(\cO) s_i I = I \pi^{\mu} x_{- w \alpha_i} (\cO) w I = \\ I x_{- w(\alpha_i)}( \pi^{\langle \mu,  -w \alpha_i \rangle} \cO) \pi^\mu w I = I \pi^{\mu} w I
\end{align}

Where the last equality follows because of our assumptions on $\mu, w, \alpha_i$.

\end{proof}

This proves \eqref{eqn:generalized-iwahori-matsumoto-formula} is true up to a constant. So all that remains is showing that the constant is $1$.

\begin{proof}[Proof of Theorem \ref{thm:generalized-iwahori-matsumoto-formula} ] 
 Let $\mu \in \cT$, $w \in W$, $i \in I$. Let us consider the case when $w(\alpha_i)$ is positive and $\langle w^{-1}(\mu), \alpha_i \rangle \geq 0$. The other cases are similar.

It suffices to show that 
  \begin{align}
    I \backslash (I w^{-1}\pi^{-\mu}I \pi^\mu w s_i \cap Is_iI )
  \end{align}
is a point.
On the one hand, we have:  $Is_iI=Is_ix_{\alpha_i}(\cO)$. By the Iwahori factorization, we have 
\begin{align}
 I w^{-1}\pi^{-\mu}I \pi^\mu w s_i=I w^{-1}\pi^{-\mu}U_\cO U^-_\pi \pi^\mu w s_i   
\end{align}
We need to consider all $i\in I$, $u_+\in U_\cO$, $u_- \in U^-_\pi$, and $f \in \cO$ such that
\begin{align}
 iw^{-1}\pi^{-\mu}u_+u_-\pi^\mu w s_i = s_i x_{\alpha_i}(f)   
\end{align}
Because $I \backslash I s_i x_{\alpha^\vee_i}(\pi \cO)$ is a point, it will suffice to show that $f \in \pi \cO$. Also note that $\pi^{-\mu}u_+\pi^\mu \in U_\cO$ and $\pi^{-\mu}u_-\pi^\mu\in U^-_\cO$ by Lemma~\ref{lem:UplusUminusK}.

Moreover, we can factorize $u_+ = u_1 u_2$ where $w^{-1} u_1 w \in U$ and $w^{-1} u_2 w \in U^-$. In particular, $\pi^{-w^{-1}(\mu)}w^{-1} u_1 w\pi^{w^{-1}(\mu)} \in I$. We can also factorize $u_-=u_3 u_4$ where $s_iw^{-1}u_3ws_i \in U^-$ and $s_iw^{-1}u_4ws_i \in U^+$. We can further factorize $u_4 = u_5 x_{-w(\alpha_i)}(g)$ where $w^{-1}u_5w \in U^+$ and $g \in \pi\cO$ (it is here that we use the assumption that $w(\alpha_i)$ is positive). 
So we then have the following. 
\begin{align}
 \left(\pi^{-w^{-1}(\mu)} w^{-1} u_2 w \pi^{w^{-1}(\mu)}\right) 
\left(\pi^{-w^{-1}(\mu)}w^{-1}u_3w  \pi^{-w^{-1}(\mu)}\right) 
\left(\pi^{-w^{-1}(\mu)}w^{-1}u_5w \pi^{w^{-1}(\mu)}\right) 
x_{-\alpha_i}(\pi^{\langle w^{-1}(\mu),\alpha_i\rangle}g-f)  \in I 
\end{align}
By the Steinberg relations ~\eqref{eq:steinberg-relations}, when we commute 
$\left(\pi^{-w^{-1}(\mu)}w^{-1}u_5w \pi^{w^{-1}(\mu)}\right)$ past $\left(x_{-\alpha_i}(\pi^{\langle w^{-1}(\mu),\alpha_i\rangle}g-f)\right)$, we only get terms in $U^+_\cO$. In particular, they lie in $I$. So we see the following.
\begin{align}
   \left(\pi^{-w^{-1}(\mu)} w^{-1} u_2 w \pi^{w^{-1}(\mu)}\right) 
\left(\pi^{-w^{-1}(\mu)}w^{-1}u_3w  \pi^{-w^{-1}(\mu)}\right) 
x_{-\alpha_i}(\pi^{\langle w^{-1}(\mu),\alpha_i\rangle}g-f)
  \in I
\end{align}
Because the first two terms lie in $\{ u \in U^- \mid s_i u s_i \in U^- \}$, we must have $
\pi^{\langle w^{-1}(\mu),\alpha_i\rangle}g-f \in \pi\cO$. As we have assumed ${\langle w^{-1}(\mu),\alpha_i\rangle} \geq 0$, we must have $f \in \pi\cO$. 
\end{proof}

We also have the following left-hand version of the Iwahori-Matsumoto formula, whose proof is analogous to the right-hand version.
\begin{Theorem}{(Left-handed version of Theorem \ref{thm:generalized-iwahori-matsumoto-formula})} \label{thm:generalized-iwahori-matsumoto-formula-left}
Let $\mu \in \cT$ be a Tits coweight, $w \in W$ be an element of the single affine Weyl group, and let $i \in I$ be a node of the single affine Dynkin diagram. Then:
\begin{align}{\label{eqn:generalized-iwahori-matsumoto-formula-left}}
T_{s_i \pi^{\mu} w } = \left\{
\begin{array}{c l}      
  T_i T_{\pi^\mu w}  \text{ if } \langle \mu,\alpha_i \rangle > 0 \text{ or if }   \langle \mu,\alpha_i \rangle = 0 \text{ and } w^{-1}(\alpha_i) > 0\\
   T_i^{-1} T_{\pi^\mu w}   \text{ if } \langle \mu, \alpha_i \rangle < 0 \text{ or if }   \langle \mu, \alpha_i \rangle = 0 \text{ and } w^{-1}(\alpha_i) < 0
\end{array}\right.
\end{align}
\end{Theorem}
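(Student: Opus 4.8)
The plan is to mirror the argument for the right-handed version (Theorem \ref{thm:generalized-iwahori-matsumoto-formula}) with the obvious modifications forced by multiplying $\pi^\mu w$ on the \emph{left} by $s_i$ rather than on the right. As in the right-handed case, I would split the proof into two parts: first establish the coset identities that show $T_{s_i \pi^\mu w}$ equals $T_i T_{\pi^\mu w}$ (resp.\ $T_i^{-1} T_{\pi^\mu w}$) \emph{up to a scalar}, and then verify that the scalar is $1$.

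First I would record the analogue of the intermediate Proposition. In the case $\langle \mu, \alpha_i \rangle > 0$, or $\langle \mu, \alpha_i \rangle = 0$ with $w^{-1}(\alpha_i) > 0$, I claim
\begin{align}
I s_i I \pi^\mu w I = I s_i \pi^\mu w I.
\end{align}
The computation is the left-handed reflection of the one in the Proposition: writing $I s_i I = x_{-\alpha_i}(\cO)\, s_i\, I$ via the Iwahori factorization, one conjugates $x_{-\alpha_i}(\cO)$ past $s_i \pi^\mu$ to produce $x_{\alpha_i}(\pi^{\langle \mu, \alpha_i\rangle}\cO)$ sitting to the left of $\pi^\mu w$, and under the stated hypotheses this factor is absorbed into $I$ on the left. (Note the relevant pairing is now $\langle \mu, \alpha_i\rangle$ rather than $\langle \mu, w(\alpha_i)\rangle$, because the reflection acts before the $w$-twist.) The case $\langle \mu, \alpha_i\rangle < 0$, or $=0$ with $w^{-1}(\alpha_i) < 0$, is handled symmetrically and yields $I s_i I s_i \pi^\mu w I = I \pi^\mu w I$, giving the $T_i^{-1}$ relation up to a scalar.

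To pin down the scalar, I would compute the structure coefficient directly, exactly paralleling the second proof in the excerpt. It suffices to show that the set
\begin{align}
I \backslash \left( I s_i I s_i \pi^\mu w \cap I \pi^\mu w \right)
\end{align}
(in the first case, with the appropriate transpose) is a single point. Using $I s_i I = s_i x_{-\alpha_i}(\cO)$ and the Iwahori factorization $I = U^+_\cO U^-_\pi A_\cO$, I would set up the equation matching an element of $I s_i \pi^\mu w$ against an element of $I \pi^\mu w$, factor the unipotent pieces according to whether $w^{-1}$ (or $w^{-1} s_i$) carries them into $U^+$ or $U^-$, and conjugate by $\pi^{\mu}$. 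The hypothesis $w^{-1}(\alpha_i) > 0$ (resp.\ $\langle \mu, \alpha_i\rangle \geq 0$) is what guarantees that the one stray root subgroup factor lands in $\pi\cO$; the Steinberg relations \eqref{eq:steinberg-relations} then clear the commutator terms into $U^+_\cO \subset I$, forcing the analogue of $f \in \pi\cO$ and hence that the double coset intersection is a point.

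The main obstacle I anticipate is purely bookkeeping: keeping the conjugations by $\pi^\mu$ and the $w$-versus-$w^{-1}$ twists straight, since the left-multiplication swaps the roles of $w$ and $w^{-1}$ and changes which pairing ($\langle\mu,\alpha_i\rangle$ rather than $\langle\mu,w\alpha_i\rangle$) governs the sign dichotomy. No new geometric input is required beyond the Iwahori factorization, Lemma \ref{lem:UplusUminusK}, and the Steinberg relations already used in the right-handed proof; the argument is genuinely ``analogous'' as the statement asserts, and the only care needed is to verify that the boundary case $\langle \mu, \alpha_i\rangle = 0$ is correctly decided by the sign of $w^{-1}(\alpha_i)$ rather than $w(\alpha_i)$.
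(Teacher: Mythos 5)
Your overall strategy---first establish the coset identities up to a scalar, then pin the scalar to $1$ via the Iwahori factorization, Lemma~\ref{lem:UplusUminusK} and the Steinberg relations---is exactly what the paper means by ``analogous,'' so the architecture is fine. But the one computation you actually write down goes wrong at precisely the sign you flag as ``the main obstacle.'' For the product $I s_i I \cdot \pi^\mu w I$ you need the \emph{right}-coset decomposition $I s_i I = \bigcup_{f \in \cO/\pi} I s_i x_{\alpha_i}(f)$; your claimed decomposition $I s_i I = x_{-\alpha_i}(\cO) s_i I$ is false as a set identity, since $x_{-\alpha_i}(f) s_i I = s_i x_{\alpha_i}(\pm f) I = s_i I$ for every $f \in \cO$. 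The stray factor $x_{\alpha_i}(f)$ must then be pushed all the way to the \emph{right}, past $\pi^\mu$ \emph{and} past $w$, before it can be absorbed; pushing rightward past $\pi^\mu$ yields $x_{\alpha_i}(\pi^{-\langle \mu,\alpha_i\rangle} f)$, the \emph{opposite} power of $\pi$ from the one you assert (the right-handed proof pushes leftward, which is why it sees $\pi^{+\langle\mu,w\alpha_i\rangle}$), and pushing past $w$ turns it into $x_{w^{-1}(\alpha_i)}\bigl(\pm\pi^{-\langle\mu,\alpha_i\rangle}f\bigr)$. Note that in your version the absorbed factor is $x_{\alpha_i}(\cdots)$ with no $w$ in it, so it could never produce the $w^{-1}(\alpha_i)$ that appears in the boundary case of the statement---that alone should have been a red flag.

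Done correctly, the computation gives $I s_i I \pi^\mu w I = I s_i \pi^\mu w I$ precisely when $\langle\mu,\alpha_i\rangle<0$, or $\langle\mu,\alpha_i\rangle=0$ and $w^{-1}(\alpha_i)>0$: that is, the conditions $\langle\mu,\alpha_i\rangle>0$ and $\langle\mu,\alpha_i\rangle<0$ in the displayed formula must be interchanged (the $\langle\mu,\alpha_i\rangle=0$ clauses are correct as printed). This is a typo in the statement rather than a flaw in your plan, but your sketch ``confirms'' the wrong version. A quick sanity check exposes it: for $\bG=\SL_2$ and $\mu=\alpha_i$ dominant, $s_i\pi^\mu=\pi^{-\mu}s_i$ has Coxeter length $\ell(\pi^\mu)-1$, so $T_{s_i\pi^\mu}=T_i^{-1}T_{\pi^\mu}$ even though $\langle\mu,\alpha_i\rangle=2>0$; equivalently, the Bernstein relation \eqref{eqn:bernstein-relation} shows $T_i\Theta_{\alpha_i}$ is not a scalar multiple of a single double-coset element. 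The corrected dichotomy is also the one forced by \eqref{eq:non-dominant-coset-basis} and the one that Definition~\ref{def:length-function} actually produces in Lemma~\ref{lem-length-recursion-left} (which carries the same transcription error). So: right approach, but as written your argument would prove a false identity, and the fix is to redo the conjugation with the correct exponent and restate the theorem accordingly.
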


With these formulas, we deduce the folowing formula for those double coset basis elements corresponding to arbitrary coweights in the Tits cone.
\begin{Corollary}
Let $w \in W$, and let $\lambda$ be a dominant coweight, then we have
\begin{align}
  \label{eq:non-dominant-coset-basis}
T_{\pi^\lambda} T_{w^{-1}} = T_{\pi^\lambda w^{-1}} =T_{w^{-1}} T_{\pi^{w\left(\lambda\right)}} 
\end{align}
In particular, this implies
\begin{align}
T_{\pi^{w\left(\lambda\right)}}  = T_{w^{-1}}^{-1}T_{\pi^\lambda} T_{w^{-1}}
\end{align}
\end{Corollary}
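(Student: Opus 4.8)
The plan is to read the two displayed equalities off the two Iwahori--Matsumoto relations: the equality $T_{\pi^\lambda}T_{w^{-1}} = T_{\pi^\lambda w^{-1}}$ from the right-handed relation (Theorem~\ref{thm:generalized-iwahori-matsumoto-formula}), and the equality $T_{\pi^\lambda w^{-1}} = T_{w^{-1}}T_{\pi^{w(\lambda)}}$ from the left-handed relation (Theorem~\ref{thm:generalized-iwahori-matsumoto-formula-left}), after which the final ``in particular'' is elementary algebra. Throughout I fix a reduced decomposition $w^{-1} = s_{i_1}\cdots s_{i_r}$ with $r = \ell(w^{-1})$, and I use repeatedly that in the Coxeter--Hecke subalgebra the braid relation gives $T_{i_1}\cdots T_{i_r} = T_{w^{-1}}$ and that $T_{w^{-1}}$ is invertible.

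For the first equality I would induct on $k$, proving $T_{\pi^\lambda s_{i_1}\cdots s_{i_k}} = T_{\pi^\lambda}T_{i_1}\cdots T_{i_k}$. Writing $p_{k-1} = s_{i_1}\cdots s_{i_{k-1}}$, the inductive step multiplies on the right by $T_{i_k}$ and applies Theorem~\ref{thm:generalized-iwahori-matsumoto-formula} with $\mu = \lambda$ and $w = p_{k-1}$. Since $p_{k-1}s_{i_k}$ is a reduced subword, $p_{k-1}(\alpha_{i_k}) > 0$; and since $\lambda$ is dominant it pairs nonnegatively with every positive root, so $\langle \lambda, p_{k-1}(\alpha_{i_k})\rangle \ge 0$. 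These are exactly the hypotheses of the top branch of \eqref{eqn:generalized-iwahori-matsumoto-formula} (either $>0$, or $=0$ together with $p_{k-1}(\alpha_{i_k})>0$), which yields $T_{\pi^\lambda p_{k-1}s_{i_k}} = T_{\pi^\lambda p_{k-1}}T_{i_k}$. Taking $k = r$ gives $T_{\pi^\lambda w^{-1}} = T_{\pi^\lambda}T_{w^{-1}}$.

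For the second equality I would first record the group identity $\pi^\lambda w^{-1} = w^{-1}\pi^{w(\lambda)}$, immediate from $w^{-1}\pi^{w(\lambda)}w = \pi^{\lambda}$ in $G$, so that $T_{\pi^\lambda w^{-1}} = T_{w^{-1}\pi^{w(\lambda)}}$. Then I would run the mirror-image induction, prepending simple reflections and peeling off factors $T_{i_k}$ on the left via Theorem~\ref{thm:generalized-iwahori-matsumoto-formula-left}, so as to build $w^{-1}\pi^{w(\lambda)}$ out of $\pi^{w(\lambda)}$. At the step prepending $s_{i_k}$ one applies the left-hand relation to $s_{i_k}\pi^{\mu_k}c_{k+1}$, where $c_{k+1} = s_{i_{k+1}}\cdots s_{i_r}$ and $\pi^{\mu_k}c_{k+1} = c_{k+1}\pi^{w(\lambda)}$, i.e.\ $\mu_k = c_{k+1}(w(\lambda))$. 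The governing root is $c_{k+1}^{-1}(\alpha_{i_k})$, which is positive because $s_{i_k}c_{k+1}$ is reduced, and the governing pairing, after using $W$-invariance of $\langle\,,\rangle$ to move $w(\lambda)$ back to the fixed dominant $\lambda$, is $\langle \mu_k,\alpha_{i_k}\rangle = \langle \lambda, s_{i_1}\cdots s_{i_k}(\alpha_{i_k})\rangle \le 0$. These match the branch of \eqref{eqn:generalized-iwahori-matsumoto-formula-left} contributing $T_{i_k}$ rather than $T_{i_k}^{-1}$, so the factors telescope to $T_{w^{-1}\pi^{w(\lambda)}} = T_{i_1}\cdots T_{i_r}T_{\pi^{w(\lambda)}} = T_{w^{-1}}T_{\pi^{w(\lambda)}}$.

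Combining the two equalities gives $T_{\pi^\lambda}T_{w^{-1}} = T_{w^{-1}}T_{\pi^{w(\lambda)}}$, and left-multiplying by $T_{w^{-1}}^{-1}$ yields $T_{\pi^{w(\lambda)}} = T_{w^{-1}}^{-1}T_{\pi^\lambda}T_{w^{-1}}$. The main obstacle is the bookkeeping that selects the correct branch of each Iwahori--Matsumoto relation at every inductive step: one must check that the dominance of $\lambda$ together with the positivity of the reduced-word roots consistently place us in the branch producing $T_{i_k}$ (so the product collapses to $T_{w^{-1}}$, not to a mix of $T_{i_k}^{\pm1}$), and in particular one must handle the boundary case where the relevant pairing vanishes by appealing to the secondary positivity condition on the root $w'(\alpha_{i_k})$ in the hypotheses. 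The left-hand induction is the more delicate of the two precisely because the translation $w(\lambda)$ appearing in it is \emph{not} dominant; the device that keeps it tractable is to express every pairing through the fixed dominant weight $\lambda$ via $W$-invariance rather than tracking the moving weight directly.
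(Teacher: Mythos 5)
Your overall route is exactly the one the paper intends: the corollary is stated immediately after the two Iwahori--Matsumoto theorems with no separate argument, so the expected proof is precisely ``telescope the right-handed relation along a reduced word to get $T_{\pi^\lambda w^{-1}}=T_{\pi^\lambda}T_{w^{-1}}$, then telescope the left-handed relation to get $T_{w^{-1}\pi^{w(\lambda)}}=T_{w^{-1}}T_{\pi^{w(\lambda)}}$.'' Your first induction is clean: $p_{k-1}(\alpha_{i_k})>0$ because the subword is reduced, dominance of $\lambda$ gives $\langle\lambda,p_{k-1}(\alpha_{i_k})\rangle\geq 0$, and both subcases land in the top branch of Theorem~\ref{thm:generalized-iwahori-matsumoto-formula}. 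Your bookkeeping in the second induction is also correct as far as it goes: the governing root $c_{k+1}^{-1}(\alpha_{i_k})$ is positive and the governing pairing is $\langle\lambda,s_{i_1}\cdots s_{i_k}(\alpha_{i_k})\rangle=-\langle\lambda,p_{k-1}(\alpha_{i_k})\rangle\leq 0$.

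The gap is in the very next sentence, where you assert that a pairing which is $\leq 0$ ``matches the branch of \eqref{eqn:generalized-iwahori-matsumoto-formula-left} contributing $T_{i_k}$.'' Read literally, Theorem~\ref{thm:generalized-iwahori-matsumoto-formula-left} assigns the $T_i$ branch to $\langle\mu,\alpha_i\rangle>0$ (or $=0$ with the root condition) and the $T_i^{-1}$ branch to $\langle\mu,\alpha_i\rangle<0$; so whenever your pairing is strictly negative, the theorem as printed puts you in the $T_{i_k}^{-1}$ branch and the telescoping collapses to a product of mixed $T_{i_k}^{\pm1}$, not to $T_{w^{-1}}$. You cannot simply declare the match --- you must either rerun the step so that the quoted hypothesis is literally satisfied, or observe explicitly that the strict-inequality clauses of the left-handed theorem as printed must be interchanged. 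Note that the corollary itself, specialized to $w=s_i$, already forces the latter reading: combining your first equality with the desired identity gives $T_{s_i\pi^{s_i\lambda}}=T_{\pi^\lambda s_i}=T_{\pi^\lambda}T_i=T_iT_{\pi^{s_i\lambda}}$, i.e.\ the $T_i$ branch occurs with $\langle s_i\lambda,\alpha_i\rangle=-\langle\lambda,\alpha_i\rangle\leq 0$ (and one can confirm independently in $SL_2$ that $\ell(s_i\pi^{\lambda})=\ell(\pi^\lambda)-1$ for $\lambda$ dominant regular, so the printed branch assignment cannot be right). So your conclusion is correct and your computation of the governing data is correct, but the step as written contradicts the statement you cite; the proof needs an explicit reconciliation of the sign convention in the left-handed relation before the second telescoping is legitimate.
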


Recalling that $\Theta_\lambda = q^{\langle \rho^\vee, \lambda \rangle} T_{\pi^\lambda}$ for dominant coweights $\lambda$, we see that when one writes double coset basis elements in terms of the Bernstein basis, the coefficients are Laurent polynomials in $q$. Therefore, as discussed at the beginning of this section, we can conclude that the structure coefficients for the double coset basis are Laurent polynomials in $q$. Because we know that the these structure coefficients always specialize to non-negative integers when $q$ is a prime power, we can in fact conclude the following.
\begin{Theorem}
The structure coefficients of the double coset basis are polynomials in $q$.
\end{Theorem}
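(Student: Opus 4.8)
The plan is to prove the stronger-looking statement that each structure coefficient lies in $\ZZ[q,q^{-1}]$, and then to upgrade ``Laurent polynomial'' to ``polynomial'' by exploiting the fact that, for every prime power $q$, the coefficient is the cardinality of a finite set and hence a non-negative integer. The Laurent-polynomiality itself I would obtain by transporting the multiplication into the Bernstein presentation, where the structure is governed by the relation \eqref{eqn:bernstein-relation} and is visibly Laurent-polynomial in $q=v^{-2}$, and then transporting back.

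First I would record that the two changes of basis between the double coset basis $\{T_x\}_{x \in \cW_\cT}$ and the Bernstein basis $\{\Theta_\mu T_w\}$ both have coefficients in $\ZZ[q,q^{-1}]$. In one direction this is essentially the content of the corollary to the generalized Iwahori-Matsumoto formula: for $\lambda$ dominant, \eqref{eq:non-dominant-coset-basis} together with $\Theta_\lambda = q^{\langle \rho^\vee,\lambda\rangle} T_{\pi^\lambda}$ shows that $T_{\pi^\lambda w^{-1}} = q^{-\langle \rho^\vee,\lambda\rangle}\,\Theta_\lambda T_{w^{-1}}$, a Laurent monomial in $q$ times a single Bernstein basis vector. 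The identity $T_{\pi^{w(\lambda)}} = T_{w^{-1}}^{-1} T_{\pi^\lambda} T_{w^{-1}}$, together with the fact that $T_{w^{-1}}^{-1}$ is a $\ZZ[q,q^{-1}]$-combination of the $T_u$, then handles a general $\mu \in \cT$: every Tits coweight is $W$-conjugate to a dominant one, and every element of $\cW_\cT$ is reached from such a $T_{\pi^\mu}$ by multiplying by simple factors $T_i^{\pm 1}$ according to Theorems \ref{thm:generalized-iwahori-matsumoto-formula} and \ref{thm:generalized-iwahori-matsumoto-formula-left}. In the reverse direction, the explicit algorithm of \cite[Section 6.2]{bkp} writes each $\Theta_\mu$ in the double coset basis with coefficients in $\ZZ[q,q^{-1}]$.

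With both change-of-basis matrices Laurent-polynomial, the structure coefficients of the double coset basis are computed by converting $T_x$ and $T_y$ into the Bernstein basis, multiplying there, and converting back. Since the Bernstein structure coefficients are themselves Laurent polynomials in $q$ -- the Coxeter-Hecke relation gives $T_i^2 = (q^{-1}-1)T_i + q^{-1}$, and the Bernstein relation \eqref{eqn:bernstein-relation} contributes the factor $(q-1)$ times a genuine (finite) element of $R[P]$ -- the coefficients $a^z_{x,y}$ produced by this three-step computation again lie in $\ZZ[q,q^{-1}]$.

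It remains to remove negative powers of $q$, the only genuinely arithmetic step. By Theorem \ref{thm:loop-group-iwahori-hecke-algebra}, for each prime power $q$ the coefficient $a^z_{x,y} = |I \backslash (I x^{-1} I z \cap I y I)|$ is a non-negative integer, so the fixed Laurent polynomial $a^z_{x,y}(q) \in \ZZ[q,q^{-1}]$ takes integer values at the infinitely many prime powers. Writing $a^z_{x,y}(q) = q^{-m} R(q)$ with $R \in \ZZ[q]$ and $m$ minimal (so $R(0) \neq 0$ when $m \geq 1$), suppose $m \geq 1$; then for every prime $p$ the integer $R(p) = p^m a^z_{x,y}(p)$ is divisible by $p$, and since $R(p) \equiv R(0) \pmod p$ we get $p \mid R(0)$ for all $p$, forcing $R(0)=0$ and contradicting minimality. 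Hence $m = 0$ and $a^z_{x,y}(q)$ is an ordinary polynomial. I expect the main obstacle to be not this final arithmetic, which is elementary, but the bookkeeping that keeps every intermediate coefficient inside $\ZZ[q,q^{-1}]$ in both directions at once; that control is exactly what the generalized Iwahori-Matsumoto formula and the inverse algorithm of \cite{bkp} supply.
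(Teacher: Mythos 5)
Your proposal is correct and follows essentially the same route as the paper: both change-of-basis matrices between the double coset basis and the Bernstein basis are shown to be Laurent-polynomial in $q$ (one direction via the generalized Iwahori--Matsumoto formula and its corollary, the other via the algorithm of \cite[Section 6.2]{bkp}), the multiplication is transported through the Bernstein presentation, and integrality at all prime powers upgrades Laurent polynomials to polynomials. The only difference is that you spell out the final arithmetic step explicitly, which the paper merely asserts.
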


\section{Bruhat orders and the enhanced length function}{\label{sec:bruhat-order-section}}
The results of this section hold for any Kac-Moody group $\bG$, but we will be most interested in the case when $\bG$ is affine type. We use the adjective ``double-affine'' to refer to many of the concepts considered in this section, but we caution that this terminology is only really appropriate when $\bG$ is affine-type.

\subsection{Double affine roots and reflections}

Let us consider the space $Q^\vee \oplus \ZZ \pi$, which we can think of as the ``double affine root lattice". We say an element $\beta^\vee + n \pi$ is a \emph{(real) double affine root} if $\beta^\vee$ is a real affine root for $\fg$. We say that $\beta^\vee + n \pi$ is a positive double affine real root if $\beta^\vee > 0$ and $n \geq 0$ or $\beta^\vee < 0$ and $n > 0$. 

\begin{Definition}
Let $\beta^\vee + n \pi$ be a positive double affine root. We define the associated \emph{reflection} as follows
\begin{align}
s_{\beta^\vee + n \pi} =   \left\{
\begin{array}{c l}      \pi^{n\beta} s_{\beta}  \text{ if } \beta^\vee > 0 \\
\pi^{-n\beta} s_{\beta} \text{ if } \beta^\vee < 0
\end{array}\right.
\end{align}
\end{Definition}

Note that this element lies in the double affine Weyl group $W_Q$, but not in the Tits double affine Weyl group. 

We define an action of $\cW_\cP$ on $Q^\vee \oplus \ZZ \pi$ as follows:
\begin{align}
\pi^\mu w ( \gamma + n \pi) = \pi^\mu( w(\gamma) + n \pi)  = w(\gamma) + (n + \langle \mu, \gamma \rangle) \pi
\end{align}
\begin{Remark}
  This definition is a verbatim generalization of the notion of affine real root and affine reflections when $G$ is a finite-type Kac-Moody group.
\end{Remark}

\subsection{The Bruhat preorder defined by Braverman, Kazhdan, and Patnaik}
In \cite[Section B.2]{bkp}, the authors define a preorder on $\cW_\cT$ as follows. Let $x,y \in \cW_\cT$, and suppose that there is a positive double affine root $\beta^\vee +n \pi$ such that 
\begin{align}
  x = y s_{\beta^\vee +n \pi} 
\end{align}
and
\begin{align}
  y(\beta^\vee +n \pi) \text{ is positive}
\end{align}
Then we say that $y \leq x$, and we say the \emph{(first) Bruhat preorder} $<$ on $W_\cT$ is the preorder generated by all such inequalities. It isn't clear from the definition that this preorder is in fact an order, but the authors of \cite{bkp} conjecture it to be so.

\begin{Remark}
The definition above is slightly different than that given by Braverman, Kazhdan, and Patnaik. They define a preorder on all of $\cW_P$ using the above formulas, and restrict this order to $\cW_\cT$. The most interesting situation is for elements of strictly positive level; here the orders coincide because the positive level elements of $\cW_P$ and $\cW_\cT$ coincide. For elements of level zero, however, it is not clear whether the two orders coincide. 

But we believe that the definition given by working in $\cW_P$ and then restricting to $\cW_\cT$ is unnatural. The level-zero elements of $\cW_\cT$ are isomorphic to the product of $W$ and a copy of $\ZZ$ corresponding to the central cocharacter. In this case, we would expect the Bruhat order on each subset $W \times \{n\}$ to be isomorphic to the Bruhat order on $W$. The definition above gives exactly this order for level-zero elements.  
\end{Remark}

\begin{Remark}
 Also, the definition considered in \cite{bkp} involves a right action of $W_\cT$ on double affine roots, but it is easy to check that it is equivalent to the one we consider.
\end{Remark}

\subsection{Length function and another Bruhat order}
Let us define the \emph{length function} $\ell$ as follows. Lengths take values in $\ZZ \oplus \ZZ \varepsilon$, which we order lexicographically. Here $\varepsilon$ is a formal symbol which we can think of as being infinitesimally smaller than one, i.e. we have $n \varepsilon < 1$ for any integer $n$.

When $\lambda$ dominant, we define :
\begin{align}
\ell(\pi^\lambda) = 2 \langle \lambda, \rho^\vee \rangle 
\end{align}
For general $\mu \in \cT$, pick $w \in W$ so that $w(\mu)$ is dominant. Then we make the following definition.
\begin{align}
\ell(\pi^\mu) = 2 \langle w(\mu), \rho^\vee \rangle 
\end{align}

The next proposition follows immediately from the definition of $\ell$ and the property that for all $w \in W$ we have:
\begin{align}
w(\rho^\vee)= \rho^\vee - \sum_{\beta^\vee \in \Inv{w}} \beta^\vee 
\end{align}
\begin{Proposition}
  \label{prop:maximum-property-of-big-lengths}
For any Tits coweight $\mu$, we have:
\begin{align}
\ell(\pi^\mu) = \max_{w \in W} 2 \langle w (\mu), \rho^\vee \rangle
\end{align}
\end{Proposition}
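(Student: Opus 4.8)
The plan is to show that among all Weyl translates $w(\mu)$ the dominant representative maximizes the pairing with $\rho^\vee$; the stated formula then follows at once after multiplying by $2$, and as a bonus this reestablishes that $\ell(\pi^\mu)$ does not depend on the chosen dominant representative. Existence of such a representative is guaranteed because $\mu$ lies in the Tits cone $\cT = \bigcup_{w\in W} w(P^{++})$, so some $w_0 \in W$ makes $\lambda := w_0(\mu)$ dominant.

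First I would fix such a $w_0$, so that by definition $\ell(\pi^\mu) = 2\langle \lambda, \rho^\vee\rangle$. For an arbitrary $w \in W$, setting $v = w w_0^{-1}$ gives $w(\mu) = v(\lambda)$, and the $W$-invariance of the pairing between $P$ and $P^\vee$ yields $\langle v(\lambda), \rho^\vee\rangle = \langle \lambda, v^{-1}(\rho^\vee)\rangle$. Next I would apply the inversion-set identity $v^{-1}(\rho^\vee) = \rho^\vee - \sum_{\beta^\vee \in \Inv{v^{-1}}} \beta^\vee$ recorded just above the proposition, to obtain
\begin{align}
\langle w(\mu), \rho^\vee\rangle = \langle \lambda, \rho^\vee\rangle - \sum_{\beta^\vee \in \Inv{v^{-1}}} \langle \lambda, \beta^\vee\rangle.
\end{align}

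Since every element of the inversion set $\Inv{v^{-1}}$ is a positive root, hence a non-negative integral combination of the simple roots $\alpha_i^\vee$, and since $\lambda$ is dominant, each summand $\langle\lambda,\beta^\vee\rangle$ is non-negative. Therefore $\langle w(\mu), \rho^\vee\rangle \le \langle \lambda, \rho^\vee\rangle$ for every $w \in W$, with equality at $w = w_0$; taking the maximum and doubling gives the assertion.

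The only point requiring care is the positivity observation $\langle\lambda,\beta^\vee\rangle \ge 0$ for all positive roots $\beta^\vee$, which rests on the standard structural fact that in the Kac-Moody setting every positive root, real or imaginary, expands with non-negative integer coefficients in the simple roots, so that dominance against the simple roots propagates to all of $\Delta_+$. Granting this, the argument is a direct computation with no further obstacle.
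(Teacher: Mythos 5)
Your proof is correct and is exactly the argument the paper intends: the paper dispatches this proposition in one line by citing the identity $w(\rho^\vee)=\rho^\vee-\sum_{\beta^\vee\in\Inv{w}}\beta^\vee$ together with the definition of $\ell$, and your write-up simply fleshes out that same computation (pairing a dominant representative against the inversion-set formula and using non-negativity of $\langle\lambda,\beta^\vee\rangle$ for positive roots).
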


\begin{Definition}
  \label{def:length-function}
We define the length function $\ell : \cW_\cT \rightarrow \ZZ \oplus \ZZ \varepsilon$ as follows. For $\mu \in \cT$, we define $\ell(\pi^\mu)$ using the above formulas, and for general elements $\pi^\mu w \in \cW_\cT$ we define:
\begin{align}
\ell(\pi^\mu w) = \ell(\pi^\mu) + \varepsilon \cdot \left(  | \{ \beta^\vee \in \inv{w^{-1}} : \langle \mu , \beta^\vee \rangle \geq 0 \} | - | \{ \beta^\vee \in \inv{w^{-1}} : \langle \mu , \beta^\vee \rangle < 0 \} | \right)
\end{align}
\end{Definition}
\newcommand{\ellbig}{\ell_{\text{big}}}
\newcommand{\ellsmall}{\ell_{\text{small}}}
\begin{Definition}
Suppose $\pi^\mu w \in W_\cT$. Then we can write $\ell(\pi^\mu w) = \ellbig(\pi^\mu w) + \ellsmall(\pi^\mu w)\varepsilon$. We call   $\ellbig$ the \emph{big length}  and $\ellsmall$ the \emph{small length}.
\end{Definition}

\begin{Lemma}
  \label{lem:length-recursion-right}
The length function satisfies the following recursive relation.
\begin{align}
\ell(\pi^\mu w s_i) =  \left\{
\begin{array}{c l}      
   \ell(\pi^\mu w) + \varepsilon \text{ if } \langle \mu, w(\alpha_i) \rangle > 0 \text{ or if }   \langle \mu, w(\alpha_i) \rangle = 0 \text{ and } w(\alpha_i) > 0\\
     \ell(\pi^\mu w) - \varepsilon  \text{ if } \langle \mu, w(\alpha_i) \rangle < 0 \text{ or if }   \langle \mu, w(\alpha_i) \rangle = 0 \text{ and } w(\alpha_i) < 0	
\end{array}\right.
\end{align}
\end{Lemma}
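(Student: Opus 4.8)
The plan is to note first that right multiplication by $s_i$ does not change the coweight, so $\pi^\mu w s_i = \pi^\mu (w s_i)$ has the same dominant-coweight part $\mu$ as $\pi^\mu w$. Hence by Proposition \ref{prop:maximum-property-of-big-lengths} the big lengths agree, $\ellbig(\pi^\mu w s_i) = \ell(\pi^\mu) = \ellbig(\pi^\mu w)$, and the entire content of the lemma is carried by the small length. It therefore suffices to show that $\ellsmall(\pi^\mu w s_i) - \ellsmall(\pi^\mu w) = \pm 1$ with the sign prescribed in the statement.

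Next I would determine how the relevant inversion set changes. Since $(w s_i)^{-1} = s_i w^{-1}$, I must compare $\inv{s_i w^{-1}}$ with $\inv{w^{-1}}$. Using the standard fact that $s_i$ permutes $\Delta_{+,\real} \setminus \{\alpha_i\}$ preserving positivity, a positive root $\beta^\vee$ lies in the symmetric difference of the two inversion sets if and only if $w^{-1}(\beta^\vee) = \pm \alpha_i$, i.e. $\beta^\vee$ is the positive one among $\pm w(\alpha_i)$ (recall $w(\alpha_i)$ is a real root and hence has a definite sign). This yields the dichotomy: if $w(\alpha_i) > 0$ then $\inv{s_i w^{-1}} = \inv{w^{-1}} \sqcup \{ w(\alpha_i)\}$, while if $w(\alpha_i) < 0$ then $\inv{s_i w^{-1}} = \inv{w^{-1}} \setminus \{ -w(\alpha_i)\}$.

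I would then substitute this into the defining formula for $\ellsmall$ in Definition \ref{def:length-function}, tracking the signed contribution of the single root $\gamma^\vee$ that is added or deleted: $\gamma^\vee$ contributes $+1$ when $\langle \mu, \gamma^\vee \rangle \geq 0$ and $-1$ when $\langle \mu, \gamma^\vee \rangle < 0$. When $w(\alpha_i) > 0$ the root $w(\alpha_i)$ is added, so the change is $+1$ exactly when $\langle \mu, w(\alpha_i)\rangle \geq 0$ and $-1$ otherwise. When $w(\alpha_i) < 0$ the root $-w(\alpha_i)$ is removed, and since $\langle \mu, -w(\alpha_i)\rangle = -\langle \mu, w(\alpha_i)\rangle$, deleting it changes $\ellsmall$ by the negative of its former contribution, giving $+1$ when $\langle \mu, w(\alpha_i)\rangle > 0$ and $-1$ when $\langle \mu, w(\alpha_i)\rangle \leq 0$. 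Matching these four subcases against the two clauses of the statement then finishes the proof.

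The step demanding the most care is the boundary case $\langle \mu, w(\alpha_i)\rangle = 0$, where the sign of the change is decided not by the pairing but by the sign of $w(\alpha_i)$ itself. This is exactly where the asymmetric $\geq 0$ versus $< 0$ convention built into Definition \ref{def:length-function} is essential: when $\langle \mu, w(\alpha_i)\rangle = 0$ and $w(\alpha_i) > 0$, the added root contributes $+1$, giving $+\varepsilon$, whereas when $w(\alpha_i) < 0$, the removed root had contributed $+1$, giving $-\varepsilon$, precisely matching the two ``$=0$'' clauses in the statement. Everything else is routine bookkeeping over the sign of $\langle \mu, w(\alpha_i)\rangle$.
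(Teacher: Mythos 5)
Your proof is correct, and it is exactly the routine verification that the paper leaves implicit (the lemma is stated there without proof): the big length depends only on $\mu$ and so is unchanged, and the small length changes by the signed contribution of the single root $\pm w(\alpha_i)$ entering or leaving $\Inv{s_i w^{-1}}$, with the $\geq 0$ versus $<0$ convention in Definition \ref{def:length-function} accounting for the tie-breaking by the sign of $w(\alpha_i)$ when $\langle \mu, w(\alpha_i)\rangle = 0$. All four subcases match the two clauses of the statement, so your write-up supplies precisely the omitted details.
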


Note that the dichotomy of this recurrence is precisely the dichotomy of the generalized Iwahori-Matsumoto relations for the Tits DAHA that we produced in the previous section.

%\noindent\dinakar{Should also include a left-side version of the above recursion lemma}

We also have the following left-hand version of the above recursion relation.
\begin{Lemma}
  \label{lem-length-recursion-left}
The length function satisfies the following recursive relation.
\begin{align}
\ell(s_i \pi^\mu w ) =  \left\{
\begin{array}{c l}      
   \ell(\pi^\mu w) + \varepsilon \text{ if } \langle \mu, \alpha_i \rangle > 0 \text{ or if }   \langle \mu, \alpha_i \rangle = 0 \text{ and } w^{-1}(\alpha_i) > 0\\
     \ell(\pi^\mu w) - \varepsilon  \text{ if } \langle \mu, \alpha_i \rangle < 0 \text{ or if }   \langle \mu, \alpha_i \rangle = 0 \text{ and } w^{-1}(\alpha_i) < 0	
\end{array}\right.
\end{align}
\end{Lemma}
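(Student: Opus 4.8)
The plan is to reduce the statement to Definition~\ref{def:length-function} by first putting $s_i\pi^\mu w$ into the normal form $\pi^\nu v$ with $v\in W$ and $\nu\in\cT$. Using the conjugation relation $w\pi^\nu = \pi^{w\nu}w$ in $\cW_P$, we have $s_i\pi^\mu w = \pi^{s_i\mu}(s_iw)$, and since $\cT$ is $W$-stable this is again an element of $\cW_\cT$. I would first dispose of the big length: by Proposition~\ref{prop:maximum-property-of-big-lengths} the value $\ellbig(\pi^\mu) = \max_{u\in W} 2\langle u(\mu),\rho^\vee\rangle$ is manifestly $W$-invariant, so $\ellbig(\pi^{s_i\mu}) = \ellbig(\pi^\mu)$. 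Hence the big length is unchanged, and the entire content of the lemma is that the small length $\ellsmall$ changes by exactly $+1$ or $-1$ according to the stated dichotomy.

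Next I would compute $\ellsmall(\pi^{s_i\mu}(s_iw))$ directly. By Definition~\ref{def:length-function} this is the signed count over the inversion set $\Inv{(s_iw)^{-1}} = \Inv{w^{-1}s_i}$, where a root $\beta^\vee$ is weighted $+1$ when $\langle s_i\mu,\beta^\vee\rangle\ge 0$ and $-1$ otherwise; since $s_i$ preserves the pairing, $\langle s_i\mu,\beta^\vee\rangle = \langle\mu,s_i\beta^\vee\rangle$. The combinatorial input is the standard behavior of inversion sets under right multiplication by a simple reflection: if $w^{-1}(\alpha_i)>0$ then $\Inv{w^{-1}s_i} = \{\alpha_i\}\sqcup s_i\,\Inv{w^{-1}}$, while if $w^{-1}(\alpha_i)<0$ then $\Inv{w^{-1}s_i} = s_i\bigl(\Inv{w^{-1}}\setminus\{\alpha_i\}\bigr)$ and $\alpha_i\in\Inv{w^{-1}}$. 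Substituting and using $s_i(s_i\beta^\vee)=\beta^\vee$, every contribution coming from the $s_i\,\Inv{w^{-1}}$ part matches term-by-term a contribution in the expansion of $\ellsmall(\pi^\mu w)$ over $\Inv{w^{-1}}$. The two signed counts therefore differ only by the single contribution of the root $\alpha_i$, which is evaluated through $s_i\alpha_i = -\alpha_i$, i.e.\ through $\langle\mu,s_i\alpha_i\rangle = -\langle\mu,\alpha_i\rangle$. Assembling the two cases shows $\ellsmall$ changes by $\pm 1$, with the sign controlled by that of $\langle\mu,\alpha_i\rangle$ and, in the boundary case $\langle\mu,\alpha_i\rangle = 0$, by the sign of $w^{-1}(\alpha_i)$.

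The step I expect to be the main obstacle is precisely this sign bookkeeping. Although the paper advertises the argument as analogous to the right-handed Lemma~\ref{lem:length-recursion-right}, there is a genuine extra subtlety: in the right-handed recursion the translation part is fixed and only the Weyl part moves, whereas here the reflection $s_i$ acts simultaneously on $\mu$ (via $\mu\mapsto s_i\mu$) and on the inversion set, and it is exactly the combination of this reflection on $\mu$ with the identity $s_i\alpha_i=-\alpha_i$ that pins down the signs. Extra care is needed at $\langle\mu,\alpha_i\rangle = 0$, where the half-open convention ($\ge 0$ versus $<0$) built into Definition~\ref{def:length-function} makes the tie-break hinge on whether $\alpha_i$ itself or its $s_i$-image lies in the inversion set, i.e.\ on the sign of $w^{-1}(\alpha_i)$. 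As a check on the final signs, the resulting dichotomy should coincide with that of the left-handed Iwahori--Matsumoto formula (Theorem~\ref{thm:generalized-iwahori-matsumoto-formula-left}), just as the right-handed recursion of Lemma~\ref{lem:length-recursion-right} matches Theorem~\ref{thm:generalized-iwahori-matsumoto-formula}; indeed one could alternatively read the recursion off from that formula, using that multiplication by $T_i$ rather than $T_i^{-1}$ records an increase rather than a decrease of length.
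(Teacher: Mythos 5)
Your strategy is the right one, and essentially the only available one, since the paper states this lemma without proof: rewrite $s_i \pi^\mu w = \pi^{s_i(\mu)}\, s_i w$, note that $\ell(\pi^{s_i(\mu)}) = \ell(\pi^\mu)$ by the $W$-invariance in Proposition~\ref{prop:maximum-property-of-big-lengths}, and compare the signed count over $\Inv{w^{-1}s_i}$ paired against $s_i(\mu)$ with the signed count over $\Inv{w^{-1}}$ paired against $\mu$. Your inversion-set identities and the reduction to the single root $\alpha_i^\vee$ are also correct.

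The gap is exactly the step you defer to ``assembling the two cases'': carried out honestly, it does not produce the dichotomy stated in the lemma. When $w^{-1}(\alpha_i)>0$ the extra root is $\alpha_i^\vee$ itself, weighted by the sign of $\langle s_i(\mu),\alpha_i^\vee\rangle = -\langle\mu,\alpha_i^\vee\rangle$; when $w^{-1}(\alpha_i)<0$ the root $\alpha_i^\vee$ is removed from $\Inv{w^{-1}}$, where it was weighted by the sign of $\langle\mu,\alpha_i^\vee\rangle$. In both cases the $\varepsilon$-coefficient increases by $1$ precisely when $\langle\mu,\alpha_i^\vee\rangle<0$, or when $\langle\mu,\alpha_i^\vee\rangle=0$ and $w^{-1}(\alpha_i)>0$; so the two strict-inequality cases come out with the sign opposite to the one claimed, while the boundary cases $\langle\mu,\alpha_i^\vee\rangle=0$ do come out as stated. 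A concrete check in type $A_1$: take $\mu=\alpha_1$ (the positive simple coroot) and $w=e$, so $s_1\pi^{\alpha_1}=\pi^{-\alpha_1}s_1$; Definition~\ref{def:length-function} gives $\ell(\pi^{-\alpha_1}s_1)=\ell(\pi^{\alpha_1})-\varepsilon$ because $\Inv{s_1}=\{\alpha_1^\vee\}$ and $\langle-\alpha_1,\alpha_1^\vee\rangle=-2<0$, whereas the lemma (with $\langle\mu,\alpha_i\rangle=2>0$) asserts $+\varepsilon$. The value $-\varepsilon$ is the one consistent with the right-handed Lemma~\ref{lem:length-recursion-right} and, at $\varepsilon=1$, with the Coxeter length of the affine simple reflection $\pi^{-\alpha_1}s_1$. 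So either the lemma's strict-inequality conditions must be interchanged (equivalently, restated in terms of $\langle s_i(\mu),\alpha_i\rangle$, the translation part of the product $s_i\pi^\mu w$), or the statement simply does not follow from Definition~\ref{def:length-function}; in either case your proof cannot be completed to the conclusion as written. Note also that your proposed cross-check against Theorem~\ref{thm:generalized-iwahori-matsumoto-formula-left} would not detect this, because that left-handed formula carries the same sign discrepancy in its strict-inequality cases (as one sees by applying the anti-automorphism $T_x\mapsto T_{x^{-1}}$ to Theorem~\ref{thm:generalized-iwahori-matsumoto-formula}).
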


\begin{Definition}
Let $x,y \in W_\cT$, and suppose that there is a positive double affine root $\beta^\vee +n \pi$ such that 
\begin{align}
  x = y s_{\beta^\vee +n \pi} 
\end{align}
and
\begin{align}
  \ell(x) > \ell(y)
\end{align}
Then we write $y \preceq x$, and we say the \emph{(second) Bruhat order} $\prec$ on $W_\cT$ is the preorder generated by such inequalities. Unlike in the case of the first Bruhat order, it is manifestly clear that $\prec$ is a partial order because it is graded by the length function. 
\end{Definition}

\subsection{Proving that the two Bruhat orders coincide}
The rest of this section is devoted to proving the following theorem.
\begin{Theorem}
  \label{thm:the-two-orders-coincide}
  The two orders $<$ and $\prec$ coincide. 
\end{Theorem}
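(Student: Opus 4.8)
The plan is to show $<$ and $\prec$ coincide by proving that each is generated by the same elementary covering relations, namely the ones coming from right-multiplication by a simple reflection $s_i$. The key observation tying the two orders together is that the dichotomy governing the length recursion in Lemma~\ref{lem:length-recursion-right} is \emph{exactly} the dichotomy in the generalized Iwahori-Matsumoto formula of Theorem~\ref{thm:generalized-iwahori-matsumoto-formula}. So the natural strategy is: first understand the simple-reflection covers, then bootstrap to arbitrary double affine reflections $s_{\beta^\vee + n\pi}$.

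First I would establish that both preorders agree on the generating relations indexed by simple reflections. For $x = \pi^\mu w$ and the cover $x s_i = \pi^\mu w s_i$, I claim the following are equivalent: (a) $x < x s_i$ in the BKP order, i.e. $x(\alpha_i)$ is positive; (b) $\ell(x) < \ell(x s_i)$, i.e. $\langle \mu, w(\alpha_i)\rangle > 0$, or this pairing is $0$ and $w(\alpha_i) > 0$. The equivalence of (a) and (b) is a direct unwinding of the $\cW_\cP$-action on double affine roots: one computes $x(\alpha_i^\vee + 0\cdot\pi) = w(\alpha_i^\vee) + \langle \mu, w(\alpha_i)\rangle \pi$, and the definition of positivity of a double affine root ($\beta^\vee > 0, n \geq 0$ or $\beta^\vee < 0, n > 0$) reproduces precisely the case split in (b). Lemma~\ref{lem:length-recursion-right} then certifies (b)$\Leftrightarrow \ell(x) < \ell(xs_i)$. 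Thus the simple covers of the two orders literally coincide.

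Next I would promote this to all double affine reflections. The crux is a factorization lemma: every positive double affine reflection $s_{\beta^\vee + n\pi}$, when it yields a cover $y \lessgtr y s_{\beta^\vee+n\pi}$, can be realized as a product of simple reflections $s_{i_1}\cdots s_{i_k}$ such that the intermediate products interpolate monotonically between $y$ and $y s_{\beta^\vee+n\pi}$ in length. This is the standard ``subword/strong-exchange'' mechanism in Coxeter-type Bruhat theory, adapted to $\cW_\cT$. One direction is immediate from the length grading: if $y \preceq x$ via some $s_{\beta^\vee+n\pi}$, then because $\prec$ is generated by \emph{arbitrary} double affine reflections and graded by $\ell$, and because $<$ is likewise generated by such reflections, I need to see that a single reflection-step in one order is realized by a chain of simple-reflection steps in the other. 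The forward containment $<\ \subseteq\ \prec$ I expect to be the cleaner half: a BKP cover $y < y s_{\beta^\vee+n\pi}$ with $y(\beta^\vee+n\pi)$ positive should force $\ell(y) < \ell(y s_{\beta^\vee+n\pi})$, which I would verify by reducing to the simple case via a reduced-decomposition argument for $s_{\beta^\vee+n\pi}$ and invoking the length recursion repeatedly.

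The hard part will be the reverse containment $\prec\ \subseteq\ <$, i.e. showing that whenever $\ell(x) > \ell(y)$ with $x = y s_{\beta^\vee+n\pi}$, one actually has $y < x$ in the BKP preorder. Here the subtlety is that the length function takes values in $\ZZ \oplus \ZZ\varepsilon$, so ``$\ell$ increases'' decomposes into a big-length comparison (governed by the dominance data $2\langle w(\mu),\rho^\vee\rangle$, cf.\ Proposition~\ref{prop:maximum-property-of-big-lengths}) and, in the big-length-constant case, a small-length comparison (the signed count of inversions in Definition~\ref{def:length-function}). I would need a strong-exchange-type statement guaranteeing that a length-increasing reflection step can be refined into a chain of length-increasing \emph{simple} reflection steps, each of which is already known to be a BKP cover by the first paragraph. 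The genuinely new difficulty compared to the classical theory is controlling level-zero and equal-big-length situations, where the $\varepsilon$-graded small length is the only thing distinguishing $x$ from $y$; there the inversion-set bookkeeping of $\inv{w^{-1}}$ must be shown to behave compatibly with the BKP positivity condition $y(\beta^\vee + n\pi) > 0$. I anticipate this reduction is where the bulk of the work lies, and where the lexicographic structure of $\ZZ \oplus \ZZ\varepsilon$ is genuinely used.
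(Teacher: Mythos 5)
Your reduction to simple-reflection covers is where the argument breaks down, and it is not how the paper proceeds. Both preorders are generated by inequalities indexed by pairs $(y,\,s_{\beta^\vee+n\pi})$ with $\beta^\vee+n\pi$ an \emph{arbitrary} positive double affine root, so to prove they coincide it suffices to check, for each such pair with $y,\;ys_{\beta^\vee+n\pi}\in\cW_\cT$, that $y(\beta^\vee+n\pi)>0$ forces $\ell(ys_{\beta^\vee+n\pi})>\ell(y)$ and $y(\beta^\vee+n\pi)<0$ forces the reverse inequality: the generating relations are then literally identical, and no chain, containment, or refinement argument is needed. Your proposal instead calls for a strong-exchange/subword mechanism ``adapted to $\cW_\cT$,'' but no such mechanism is available: $\cW_\cT$ is only a semigroup and is far from being a Coxeter group, the reflection $s_{\beta^\vee+n\pi}=\pi^{\pm n\beta}s_\beta$ does not itself lie in $\cW_\cT$, intermediate products along a word for it need not stay in $\cW_\cT$ (where alone $\ell$ is defined), and Lemma~\ref{lem:length-recursion-right} only controls right multiplication by simple reflections of the single affine Weyl group $W$ --- it says nothing about the translation factor $\pi^{\pm n\beta}$, which is exactly where the nontrivial length change occurs. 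Establishing such an exchange property would be at least as hard as the theorem itself, and you leave it entirely unproved.

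Concretely, the step you have no tools for is the comparison of the big length: writing $\pi^\mu w\,s_{\beta^\vee+n\pi}=\pi^{\mu+nw(\beta)}ws_\beta$, one must compare $\max_{v\in W}2\langle v(\mu+nw(\beta)),\rho^\vee\rangle$ with $\max_{v\in W}2\langle v(\mu),\rho^\vee\rangle$. The paper does this with a convexity-type statement (Lemmas~\ref{lem:big-length-lemma} and~\ref{lem:big-length-lemma-2}): $\ell(\pi^{\mu-k\beta})$ is smaller or larger than $\ell(\pi^\mu)$ according to whether $\mu-k\beta$ lies strictly between $\mu$ and $s_\beta(\mu)$ or strictly outside that segment. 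This shows the big length moves strictly in the correct direction whenever the $\pi$-coefficient of $y(\beta^\vee+n\pi)$ is nonzero. In the remaining case that coefficient vanishes, the reflection acts as \emph{left} multiplication by the single affine reflection $s_{w(\beta)}$, and the resulting change in the small length is the signed count of $\gamma^\vee\in\Inv{s_{w(\beta)}}$ according to the sign of $\langle\mu,\gamma^\vee\rangle$; its sign is pinned down by an involution argument on the inversion set (Lemma~\ref{lem:inversion-set-lemma}), not by bookkeeping along a chain of simple covers. Your second paragraph (the simple-reflection equivalence) is correct, but it is merely the $n=0$, $\beta^\vee$ simple instance of the general statement, not a generating case from which the rest follows.
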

\begin{Remark}
  In particular,  we see that the preorder $<$ is a partial order, which gives a positive answer to a conjecture of Braverman, Kazhdan, and Patnaik \cite[Section B.2]{bkp}.
\end{Remark}
\begin{Lemma}
  \label{lem:big-length-lemma}
Let $\nu \in \cT$, let $\beta^\vee$ be a positive real root, and let $\beta$ be the corresponding coroot. Suppose $\langle \nu, \beta^\vee \rangle > 0 $. Then for integers $m$ such that $0 < m < \langle \nu, \beta^\vee \rangle$ and $\nu - m\beta \in \cT$, we have:
\begin{align}
\ell(\pi^{\nu - m \beta}) < \ell(\pi^\nu)
\end{align}
\end{Lemma}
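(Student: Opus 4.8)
The plan is to reduce the claim to the elementary observation that $\ell(\pi^\mu) = 2\langle \mu, \rho^\vee\rangle$ whenever $\mu$ itself is dominant, together with the maximum characterization of $\ell$ recorded in Proposition~\ref{prop:maximum-property-of-big-lengths}. Write $\nu' = \nu - m\beta$. We are told $\nu, \nu' \in \cT$, so both lengths are defined as $\ell(\pi^\nu) = \max_{w\in W} 2\langle w(\nu),\rho^\vee\rangle$ and $\ell(\pi^{\nu'}) = \max_{w\in W} 2\langle w(\nu'),\rho^\vee\rangle$. The goal is the strict inequality $\ell(\pi^{\nu'}) < \ell(\pi^\nu)$. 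Since $\beta^\vee$ is a positive real root with $\langle \nu, \beta^\vee\rangle > 0$, the coweight $\nu$ already ``points in the positive $\beta$-direction,'' and subtracting a positive multiple $m\beta$ with $0 < m < \langle\nu,\beta^\vee\rangle$ should strictly decrease the pairing against $\rho^\vee$ after maximizing over $W$.

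The key computational step is to choose, for $\nu'$, a dominant representative. Let $w_0 \in W$ be any element with $w_0(\nu')$ dominant, so that $\ell(\pi^{\nu'}) = 2\langle w_0(\nu'),\rho^\vee\rangle$. First I would compare this with the candidate value $2\langle w_0(\nu),\rho^\vee\rangle$ for $\ell(\pi^\nu)$; by the maximum property, $\ell(\pi^\nu) \geq 2\langle w_0(\nu),\rho^\vee\rangle$. Now expand the difference:
\begin{align}
2\langle w_0(\nu),\rho^\vee\rangle - 2\langle w_0(\nu'),\rho^\vee\rangle = 2m\langle w_0(\beta),\rho^\vee\rangle.
\end{align}
So it suffices to show $\langle w_0(\beta),\rho^\vee\rangle > 0$, i.e. that $w_0(\beta^\vee)$ pairs positively with $\rho^\vee$, which for real roots amounts to $w_0(\beta^\vee)$ being a positive root. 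The point is that $w_0(\nu')$ dominant forces $\langle w_0(\nu'),\alpha_i^\vee\rangle \geq 0$ for all $i$; combined with $w_0(\nu) = w_0(\nu') + m\,w_0(\beta)$ and the hypothesis $\langle \nu,\beta^\vee\rangle > 0$ (equivalently $\langle w_0(\nu), w_0(\beta^\vee)\rangle > 0$), one controls the sign of $w_0(\beta^\vee)$.

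The step I expect to be the main obstacle is establishing rigorously that $w_0(\beta^\vee)$ is a \emph{positive} root (giving the \emph{strict} inequality), rather than merely nonnegative pairing. The subtlety is that $\rho^\vee$ in the Kac-Moody setting is a choice of $\sum \Lambda_i^\vee$ and pairs strictly positively with every positive real root but vanishes on the imaginary direction $\delta^\vee$; one must ensure $w_0(\beta^\vee)$ does not drift into having a zero or ambiguous pairing. I would handle this by arguing directly with the reflection $s_\beta$: if $w_0(\beta^\vee)$ were negative, then $s_\beta$ applied appropriately would increase $\langle\,\cdot\,,\rho^\vee\rangle$, contradicting that $w_0(\nu')$ was chosen to realize the maximum (equivalently, contradicting dominance of $w_0(\nu')$ against the positive root $w_0(\beta^\vee)$). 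Making this sign analysis airtight — tracking how the condition $0 < m < \langle\nu,\beta^\vee\rangle$ guarantees $\nu'$ stays on the ``correct side'' of the $\beta$-wall so that the strict drop survives maximization over all of $W$ — is the delicate part; the rest is bookkeeping with the formula for $\ell$ and the identity $w(\rho^\vee) = \rho^\vee - \sum_{\gamma^\vee \in \Inv{w}}\gamma^\vee$.
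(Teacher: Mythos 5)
Your reduction is fine up to the point you yourself flag, but the step you flag is not a technical subtlety to be tightened --- it is false as stated, and your proposed repair does not work. Dominance of $w_0(\nu')$ does \emph{not} force $w_0(\beta^\vee)$ to be a positive root. Since $\langle w_0(\nu'), w_0(\beta^\vee)\rangle = \langle \nu',\beta^\vee\rangle = \langle\nu,\beta^\vee\rangle - 2m$, whenever $m > \tfrac{1}{2}\langle\nu,\beta^\vee\rangle$ (which the hypothesis $0<m<\langle\nu,\beta^\vee\rangle$ allows) this pairing is negative, and a dominant coweight pairs nonnegatively with every positive root; hence \emph{every} $w_0$ with $w_0(\nu')$ dominant has $w_0(\beta^\vee)<0$, and your difference $2m\langle w_0(\beta),\rho^\vee\rangle$ comes out negative, so the comparison via $2\langle w_0(\nu),\rho^\vee\rangle$ gives a lower bound on $\ell(\pi^\nu)$ that is too weak. (Already $\langle\nu',\beta^\vee\rangle=0$ is problematic: e.g.\ $\nu'=0$ is dominant for every $w_0$, including ones sending $\beta^\vee$ to a negative root.) Your suggested fix --- deriving a contradiction with dominance or with maximality if $w_0(\beta^\vee)$ were negative --- therefore cannot succeed; there is no contradiction. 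Note also that your computation never actually invokes the upper bound $m<\langle\nu,\beta^\vee\rangle$, which is a sign that the essential case is being missed: without that bound the lemma is false, so any correct proof must use it.

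The missing idea is a second, symmetric comparison. The paper first conjugates so that $\beta=\alpha_i$ is simple, then shows $\langle \nu-m\alpha_i, v(\rho^\vee)\rangle$ is strictly dominated for \emph{every} $v\in W$, splitting into cases according to the sign of $v^{-1}(\alpha_i)$. When $v^{-1}(\alpha_i)<0$ (your bad case), one writes $v=s_iu$ with $u^{-1}(\alpha_i)>0$ and computes $\langle \nu-m\alpha_i, v(\rho^\vee)\rangle=\langle \nu+(m-\langle\nu,\alpha_i^\vee\rangle)\alpha_i, u(\rho^\vee)\rangle$; now the coefficient $m-\langle\nu,\alpha_i^\vee\rangle$ is negative precisely because $m<\langle\nu,\alpha_i^\vee\rangle$, and one concludes as in the good case, comparing against $\langle\nu,u(\rho^\vee)\rangle\le \tfrac12\ell(\pi^\nu)$. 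Equivalently, in your setup: when $\langle\nu',\beta^\vee\rangle<0$ you should compare $\ell(\pi^{\nu'})$ not with $\nu$ but with $s_\beta(\nu)$ (which has the same length), writing $\nu'=s_\beta(\nu)-(\langle\nu,\beta^\vee\rangle-m)(-\beta)$; and in the boundary case $\langle\nu',\beta^\vee\rangle=0$ you must additionally adjust $w_0$ by an element of the stabilizer of $w_0(\nu')$ to force $w_0(\beta^\vee)>0$. Without this case division your argument establishes the lemma only for $m<\tfrac12\langle\nu,\beta^\vee\rangle$.
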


\begin{proof}
Because the length function is invariant for lattice elements under conjugation by $W$, we can assume $\beta$ is a simple coroot $\alpha_i$ (choose $w$ that sends $\beta$ to $\alpha_i$, and replace $\nu$ by $w(\nu)$).

Then we claim 
\begin{align}
\langle \nu - m \alpha_i, v(\rho^\vee) \rangle < \langle \nu, v(\rho^\vee) \rangle
\end{align}
for all $v \in W$.

There are two cases, depending on whether $v^{-1}( \alpha_i)$ is positive or negative.

If $v^{-1}( \alpha_i)$ is positive, then applying $v^{-1}$ and using the fact that $m > 0$, we have the inequality.

If $v^{-1}( \alpha_i)$ is negative, then we can write $v = s_i u$, where $u^{-1}(\alpha_i)$ is positive. In this case:
\begin{align}
\langle \nu - m \alpha_i, v(\rho^\vee) \rangle  = \langle \nu - m \alpha_i, s_i u(\rho^\vee) \rangle = \langle \nu + (m-\langle \nu, \alpha_i \rangle) \alpha_i,  u(\rho^\vee) \rangle
\end{align}
Because $(m-\langle \nu, \alpha_i \rangle) < 0$, we can argue as we did in the first case.

\end{proof}

%There is a similar lemma when $\langle \mu, \beta^\vee \rangle < 0$. \dinakar{I don't think we need to say this.} 

\begin{Lemma}{\label{lem:big-length-lemma-2}} Let $\mu \in \cT$, and let $\beta$ be a positive affine coroot such that $\langle \mu, \beta^\vee \rangle \neq 0$. Suppose $k \in \ZZ$ is such that $\mu - k \beta \in \cT$.
Let $t = \frac{k}{\langle k, \beta^\vee \rangle}$, which is the unique real number satisfying 
\begin{align}
  \label{eq:10}
  \mu - k \beta = (1-t) \cdot \mu + t \cdot s_\beta(\mu)
\end{align}
If $0 < t < 1$, then we have:
\begin{align}
\ell(\pi^{\mu - k \beta}) < \ell(\pi^\mu)
\end{align}
If $t<0$ or $t>1$, then we have:
\begin{align}
\ell(\pi^{\mu - k \beta}) > \ell(\pi^\mu)
\end{align}
Of course, if $t=0$ or $t=1$, we have:
\begin{align}
\ell(\pi^{\mu - k \beta}) = \ell(\pi^\mu)
\end{align}
\end{Lemma}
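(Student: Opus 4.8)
The plan is to reduce everything to the behaviour of the length along the line $s \mapsto \mu - s\beta$ and to exploit convexity. First I would record the (evidently intended) formula $t = k/\langle \mu, \beta^\vee\rangle$; writing $c := \langle \mu, \beta^\vee \rangle \neq 0$, equation \eqref{eq:10} is immediate since $s_\beta(\mu) = \mu - c\beta$, so $(1-t)\mu + t\,s_\beta(\mu) = \mu - tc\,\beta = \mu - k\beta$. Since a pure translation $\pi^\nu$ has trivial $w$-part, its small length vanishes and $\ell(\pi^\nu)$ is the integer $2g(\nu)$, where $g(\nu) := \max_{w \in W} \langle w(\nu), \rho^\vee \rangle$; this is exactly Proposition \ref{prop:maximum-property-of-big-lengths}, and in particular $g$ is $W$-invariant, so $\ell(\pi^{s_\beta(\mu)}) = \ell(\pi^\mu)$. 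Thus all three claimed (in)equalities are statements about the single function $h(s) := g(\mu - s\beta)$.

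Next I would set up the convexity. As a maximum of the linear functionals $\nu \mapsto \langle \nu, w^{-1}(\rho^\vee)\rangle$, the function $g$ is convex, hence so is $h$; moreover, because the Tits cone $\cT$ is a convex cone containing both $\mu$ and $\mu - k\beta$, the entire segment between them lies in $\cT$, so $h$ is a well-defined convex function on the relevant interval. The reflection $s_\beta$ gives $s_\beta(\mu - s\beta) = \mu - (c-s)\beta$, whence the symmetry $h(s) = h(c-s)$; in particular $h(0) = h(c)$, and these correspond to the boundary cases $t = 0$ and $t = 1$, which therefore give equality. Using $s_\beta$ I reduce to $c > 0$ (replacing $\mu$ by $s_\beta(\mu)$ if $c < 0$), and I reduce the exterior case $t < 0$ to $t > 1$ via $h(k) = h(c-k)$, noting $\mu - (c-k)\beta = s_\beta(\mu - k\beta) \in \cT$.

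With $c > 0$ fixed, the interior case $0 < t < 1$ is precisely Lemma \ref{lem:big-length-lemma} applied to $\nu = \mu$ and $m = k$ (here $0 < k < c$ and $\mu - k\beta \in \cT$), giving $h(k) < h(0)$. The remaining point, and the crux, is $t > 1$, i.e.\ $k > c$. For this I would first establish the strict midpoint inequality $h(c/2) < h(0)$: if equality held, then since $g$ is a max of linear functionals there would be a single $w \in W$ simultaneously maximizing $\langle w(\mu), \rho^\vee\rangle$ and $\langle w(s_\beta\mu), \rho^\vee\rangle$; as a maximizer forces its argument to be dominant, both $w(\mu)$ and $w(s_\beta\mu) = s_{w(\beta)}(w(\mu))$ would be dominant. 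But a dominant coweight $\lambda$ and its reflection $s_\gamma(\lambda)$ in a coroot $\gamma$ cannot both be dominant unless $\langle \lambda, \gamma^\vee \rangle = 0$, whereas here $\langle w(\mu), w(\beta^\vee)\rangle = \langle \mu, \beta^\vee \rangle = c \neq 0$ --- a contradiction. Hence $h(c/2) < h(0) = h(c)$.

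Finally I finish by convexity: the difference-quotient inequality for the points $c/2 < c < k$ reads $\frac{h(c) - h(c/2)}{c/2} \le \frac{h(k) - h(c)}{k - c}$, and the left side is strictly positive by the midpoint inequality, forcing $h(k) > h(c) = h(0)$, i.e.\ $\ell(\pi^{\mu - k\beta}) > \ell(\pi^\mu)$. The main obstacle is exactly this strict midpoint inequality: convexity alone yields only the non-strict bounds $h(s) \le h(0)$ on $[0,c]$ and $h(s) \ge h(0)$ outside it, and the entire content of the Lemma is that these become strict away from the endpoints; the dominance argument above is what upgrades them. Everything else is bookkeeping with the symmetry $h(s) = h(c-s)$ and the reductions to $c > 0$ and, if desired, to simple $\beta$ (the latter via $W$-invariance of $\ell$, exactly as in Lemma \ref{lem:big-length-lemma}).
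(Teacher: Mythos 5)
Your proof is correct, and for the interior case $0<t<1$ it coincides with the paper's (a direct application of Lemma \ref{lem:big-length-lemma} with $\nu=\mu$, $m=k$, after reducing to $c=\langle\mu,\beta^\vee\rangle>0$). For the exterior cases, however, you take a genuinely different route. The paper handles \emph{all} four sign/position cases by reducing to Lemma \ref{lem:big-length-lemma} via explicit substitutions (e.g.\ for $c>0$, $k>c$ it takes $\nu=s_\beta(\mu)+k\beta=\mu+(k-c)\beta$ and $m=k-c$, so that $\nu-m\beta=\mu$ and $\ell(\pi^\nu)=\ell(\pi^{\mu-k\beta})$ by $W$-invariance); you instead introduce the function $h(s)=\max_{w}\langle w(\mu-s\beta),\rho^\vee\rangle$ on the segment, observe it is convex and symmetric under $s\mapsto c-s$, prove the strict midpoint inequality $h(c/2)<h(0)$ by the dominance obstruction (a maximizing $w$ would make both $w(\mu)$ and $s_{w(\beta)}(w(\mu))$ dominant, impossible when $\langle\mu,\beta^\vee\rangle\neq 0$), and then get the exterior case from the three-point difference-quotient inequality. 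Your argument is sound: the needed facts (convexity of the real Tits cone, attainment of the max on it, and that a maximizer forces its argument dominant) are all standard, though note that you are implicitly extending $\ell(\pi^{\,\cdot\,})$ to real points of the Tits cone, which the paper never does since it only ever evaluates $\ell$ at lattice points. What your approach buys is a cleaner conceptual picture -- the length along the $\beta$-line is a convex $s_\beta$-symmetric function that is strictly non-constant because of the dominance obstruction, and in fact your midpoint inequality plus convexity would also yield the interior case without invoking Lemma \ref{lem:big-length-lemma} at all. What the paper's approach buys is that it stays entirely within the lattice and reuses one lemma uniformly. You also correctly flagged the typo $t=k/\langle k,\beta^\vee\rangle$, which should read $t=k/\langle\mu,\beta^\vee\rangle$.
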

\begin{proof}
The various cases can be handled by applying Lemma~\ref{lem:big-length-lemma} using the following particular choices of $\nu$ and $m$.
\begin{itemize}
\item If $\langle \mu, \beta^\vee \rangle > 0$ and $0 < k < \langle \mu, \beta^\vee \rangle$, use $\nu = \mu$ and $m = k$.
\item If $\langle \mu, \beta^\vee \rangle > 0$ and $k > \langle \mu, \beta^\vee \rangle$, use $\nu = s_\beta(\mu) + k \beta$ and $m = k-\langle \mu, \beta \rangle$. 
\item If $\langle \mu, \beta^\vee \rangle < 0$ and $0 > k > \langle \mu, \beta^\vee \rangle$, use $\nu = s_\beta(\mu)$ and $m =k -\langle \mu, \beta^\vee \rangle -k$.
\item If $\langle \mu, \beta^\vee \rangle < 0$ and $k < \langle \mu, \beta^\vee \rangle$, use $\nu = \mu-k\beta$ and $m = -k$.
\end{itemize}
\end{proof}

\begin{Lemma}
  \label{lem:inversion-set-lemma}
 Let $\beta^\vee$ be a positive (single affine) real root, and let $\mu$ be a coweight. Then
\begin{align}
 \left|\{ \gamma^\vee \in \Inv{s_{\beta}} : \langle \mu, \gamma^\vee \rangle \geq 0 \}\right| - \left|\{ \gamma^\vee \in \Inv{s_{\beta}} : \langle \mu, \gamma^\vee \rangle < 0 \}\right| 
\end{align}
is strictly positive if and only if 
\begin{align}
  \langle \mu, \beta^\vee \rangle \geq 0
\end{align}
\end{Lemma}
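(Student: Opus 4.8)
The plan is to analyze the finite set $\Inv{s_\beta}$ by exploiting a natural involution it carries. First I would record the basic data: since $\beta^\vee$ is a real root, $s_\beta \in W$, the set $\Inv{s_\beta}$ is finite, and $s_\beta$ acts on roots by $s_\beta(\gamma^\vee) = \gamma^\vee - \langle \beta, \gamma^\vee \rangle \beta^\vee$, where $\beta$ is the coroot of $\beta^\vee$. A short computation (the same one showing the $W$-action is self-adjoint for the pairing $P \times P^\vee \to \ZZ$) gives $\langle s_\beta(\mu), \gamma^\vee \rangle = \langle \mu, s_\beta(\gamma^\vee) \rangle$, but in fact I will only need the reflection formula itself.

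Next I would introduce the map $\iota(\gamma^\vee) = -s_\beta(\gamma^\vee)$ and verify it is an involution of $\Inv{s_\beta}$: if $\gamma^\vee > 0$ and $s_\beta(\gamma^\vee) < 0$, then $\iota(\gamma^\vee) > 0$ and $s_\beta(\iota(\gamma^\vee)) = -\gamma^\vee < 0$, so $\iota(\gamma^\vee) \in \Inv{s_\beta}$, and plainly $\iota^2 = \mathrm{id}$. Its only fixed point is $\beta^\vee$, since a fixed point satisfies $s_\beta(\gamma^\vee) = -\gamma^\vee$, forcing $\gamma^\vee$ into the $(-1)$-eigenline $\CC \beta^\vee$, and the only positive root proportional to a real root $\beta^\vee$ is $\beta^\vee$ itself. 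Two facts then drive everything: the identity $\gamma^\vee + \iota(\gamma^\vee) = \langle \beta, \gamma^\vee \rangle \beta^\vee$, immediate from the reflection formula, and the positivity $\langle \beta, \gamma^\vee \rangle > 0$ for every $\gamma^\vee \in \Inv{s_\beta}$. The latter is the crucial structural input: if $\langle \beta, \gamma^\vee \rangle \leq 0$, then $s_\beta(\gamma^\vee) = \gamma^\vee + (-\langle \beta, \gamma^\vee \rangle)\beta^\vee$ is a nonzero nonnegative combination of the positive roots $\gamma^\vee$ and $\beta^\vee$ in the simple-root basis, hence positive, contradicting $\gamma^\vee \in \Inv{s_\beta}$.

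Finally I would partition $\Inv{s_\beta}$ into the single fixed point $\beta^\vee$ together with the two-element orbits $\{\gamma^\vee, \iota(\gamma^\vee)\}$, and compute the signed count $D(\mu) := |\{\gamma^\vee \in \Inv{s_\beta} : \langle \mu, \gamma^\vee \rangle \geq 0\}| - |\{\gamma^\vee \in \Inv{s_\beta} : \langle \mu, \gamma^\vee \rangle < 0\}|$ orbit by orbit. Pairing $\mu$ against the key identity yields $\langle \mu, \gamma^\vee \rangle + \langle \mu, \iota(\gamma^\vee) \rangle = \langle \beta, \gamma^\vee \rangle \langle \mu, \beta^\vee \rangle$, whose sign is governed entirely by $\langle \mu, \beta^\vee \rangle$ because $\langle \beta, \gamma^\vee \rangle > 0$. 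When $\langle \mu, \beta^\vee \rangle \geq 0$, no two-orbit can have both pairings strictly negative, so each contributes $+2$ or $0$ to $D(\mu)$, while the fixed point contributes $+1$; hence $D(\mu) \geq 1 > 0$. When $\langle \mu, \beta^\vee \rangle < 0$, no two-orbit can have both pairings $\geq 0$, so each contributes $-2$ or $0$, and the fixed point contributes $-1$; hence $D(\mu) \leq -1 < 0$. Combining the two cases gives $D(\mu) > 0$ if and only if $\langle \mu, \beta^\vee \rangle \geq 0$, which is the claim.

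The only delicate points are bookkeeping rather than conceptual: ensuring the boundary case $\langle \mu, \gamma^\vee \rangle = 0$ is consistently counted on the ``$\geq 0$'' side (so that an orbit with both pairings zero contributes $+2$), and confirming there is exactly one fixed point, so that the leftover odd term breaks the tie in the correct direction. I expect the positivity statement $\langle \beta, \gamma^\vee \rangle > 0$ on inversions to be the genuine crux; once that is in hand, the remainder is a clean sign analysis.
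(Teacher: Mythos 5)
Your proof is correct and follows essentially the same route as the paper: the involution $\iota(\gamma^\vee) = -s_\beta(\gamma^\vee)$ with unique fixed point $\beta^\vee$, the identity $\langle \mu, \gamma^\vee \rangle + \langle \mu, \iota(\gamma^\vee) \rangle = \langle \beta, \gamma^\vee \rangle \langle \mu, \beta^\vee \rangle$, and an orbit-by-orbit sign count. In fact you are slightly more careful than the paper, which only asserts $\langle \beta, \gamma^\vee \rangle \neq 0$, whereas your explicit verification that $\langle \beta, \gamma^\vee \rangle > 0$ for $\gamma^\vee \in \Delta(s_\beta,-)$ is exactly what the sign argument needs.
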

\begin{proof}
Consider the involution $\iota$ of $\Inv{s_\beta}$ defined by the following formula.
  \begin{align}
    \iota(\gamma^\vee) = -s_{\beta}(\gamma^\vee)
  \end{align}
It is easy to see that the only fixed point of $\iota$ is $\beta^\vee$.
In particular, we see that $\Inv{s_\beta}$ has odd order.
Suppose $ \langle \mu, \beta^\vee \rangle \geq 0$.
Let $\gamma^\vee \in \Inv{s_\beta}$. 
Then we must have $\langle \gamma^\vee, \beta \rangle \neq 0$, and we also have
\begin{align}
\langle \mu, \gamma^\vee \rangle + \langle \mu, \iota(\gamma^\vee) \rangle = \langle \beta, \gamma^\vee \rangle \langle \mu, \beta^\vee \rangle
\end{align}
In particular, at least one of $\langle \mu, \gamma^\vee \rangle$ or $\langle \mu, \iota(\gamma^\vee) \rangle$ must have the same sign as $\langle \mu, \beta^\vee \rangle$ (where we interpret zero to be positive for this purpose). So a majority of the elements $\gamma^\vee \in \Inv{s_\beta}$ must have the property that $\langle \mu, \gamma^\vee \rangle \geq 0$. The other case follows similarly.

\end{proof}

\begin{proof}[Proof of Theorem~\ref{thm:the-two-orders-coincide}] 
Let $\pi^\mu w \in \cW_\cT$, and let $\beta^\vee + n \pi$ be a positive double affine root. And furthermore, suppose that $\pi^\mu w s_{\beta^\vee + n \pi} \in \cW_\cT$. Then we need to show that if  
\begin{align}
  \pi^\mu w(\beta^\vee + n \pi) > 0
\end{align}
then, 
\begin{align}
  \label{eq:100}
 \ell(\pi^\mu w s_{\beta^\vee + n \pi}) >  \ell(\pi^\mu w) 
\end{align}
and to show the similar statement where the inequality signs are reversed.
Let us consider the case when
\begin{itemize}
\item $\beta^\vee > 0$
\item $\pi^\mu w(\beta^\vee + n \pi) > 0$
\end{itemize}
In this case, 
\begin{align}
  s_{\beta^\vee + n \pi}= \pi^{n\beta}s_{\beta}
\end{align}
We have 
\begin{align}
\pi^\mu w(\beta^\vee + n \pi) = w(\beta^\vee) + (n + \langle(w(\beta^\vee), \mu \rangle ) \pi
\end{align}
and we also have  
\begin{align}
  \pi^\mu w s_{\beta^\vee + n \pi} = \pi^{\mu + n w(\beta)}w s_{\beta}
\end{align}

Therefore, we have
\begin{align}
  \label{eq:14}
   n + \langle(w(\beta^\vee), \mu \rangle \geq 0
\end{align}
If the inequality is strict, using Lemma \ref{lem:big-length-lemma-2} we compute that $\ellbig(\pi^\mu w s_{\beta^\vee + n \pi}) > \ellbig(\pi^\mu w)$, which implies our desired result.

So all that remains is the case where
\begin{align}
  n = -\langle(w(\beta^\vee), \mu \rangle
\end{align}
In this case, we have 
\begin{align}
\pi^\mu w(\beta^\vee + n \pi) = w(\beta^\vee)
\end{align}
In particular, we have $w(\beta^\vee) > 0$, and we have 
\begin{align}
  \pi^\mu w s_{\beta^\vee + n \pi} = \pi^{s_{w(\beta)}(\mu)}w s_{\beta} = s_{w(\beta)}\pi^\mu w
\end{align}

By repeated use of Lemma \ref{lem-length-recursion-left}, we see that 
\begin{align}
 \ell( s_{w(\beta)}\pi^\mu w ) = \ell(\pi^\mu w) + \varepsilon \cdot \left( \left|\{ \gamma \in \Inv{s_{w(\beta)}} : \langle \mu, \gamma \rangle \geq 0 \}\right| - \left|\{ \gamma \in \Inv{s_{w(\beta)}} : \langle \mu, \gamma \rangle < 0 \}\right| \right)
\end{align}
By Lemma \ref{lem:inversion-set-lemma}, we know that the sign of 
\begin{align}
\left( \left|\{ \gamma \in \Inv{s_{w(\beta)}} : \langle \mu, \gamma \rangle \geq 0 \}\right| - \left|\{ \gamma \in \Inv{s_{w(\beta)}} : \langle \mu, \gamma \rangle < 0 \}\right| \right) 
\end{align}
is the same as the sign of $\langle \mu, w(\beta^\vee) \rangle$, which is positive (recall here that when $\langle \mu, w(\beta^\vee) \rangle = 0$ we also say it has ``positive'' sign for this purpose). So we have that
\begin{align}
 \ell(\pi^\mu w s_{\beta^\vee + n \pi}) >  \ell(\pi^\mu w) 
\end{align}
as desired. The other cases follow by a similar argument.
\end{proof}

\section{Some remarks when $\bG$ is finite-type}

\newcommand{\ellcox}{\ell_\text{cox}}

In this section, we will consider the case when $\bG$ is finite-type and compare the usual development of the Bruhat order and length function with the proofs given in the previous section. For simplicity, let us additionally assume that $\bG$ is simply connected. In this case, $\cW_P = \cW_Q = \cW_\cT$. This group is usually denoted $\Waff$ for the \emph{(single) affine Weyl group}, and notably $\Waff$ is a Coxeter group. There is a general Coxeter-group notion of Bruhat order on $\Waff$, which is graded by the usual length function $\ellcox$ taking values in non-negative integers.

Moreover, we can study the Iwahori-Hecke algebra of functions on $G(F)$ that are biinvariant for the action of the Iwahori subgroup and supported on finitely many double cosets. We have the basis given by indicator functions of double cosets $\{ T_x \mid x \in \Waff \}$, and we have the usual Iwahori-Matsumoto relations.

\begin{Proposition}{\cite[Corollary 3.6]{im}}
\begin{align}{\label{eqn:ordinary-iwahori-matsumoto-formula}}
T_{\pi^{\mu} w s_i} = \left\{
\begin{array}{c l}      
   T_{\pi^\mu w} T_i \text{ if } \ell(\pi^{\mu} w s_i) = \ell(\pi^{\mu} w) + 1  \\
    T_{\pi^\mu w} T_i^{-1}  \text{ if } \ell(\pi^{\mu} w s_i) = \ell(\pi^{\mu} w) - 1 
\end{array}\right.
\end{align}
\end{Proposition}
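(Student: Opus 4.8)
The plan is to recognize this as the classical Iwahori--Matsumoto relation and to deduce it directly from the Coxeter--Hecke algebra structure that is available in finite type. Since $\bG$ is finite-type and simply connected, $\cW_\cT = \Waff$ is a Coxeter group whose generators are the affine simple reflections $\{s_i\}$, and the Iwahori--Hecke algebra $\cH(G,I)$ is, after the specialization $v = q^{-1/2}$, the associated Coxeter--Hecke algebra $\cH_{\Waff}$ under $T_x \mapsto T_x$. So I would reduce the entire statement to the two defining relations of a Coxeter--Hecke algebra and never touch $p$-adic integration directly.

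First I would treat the length-increasing branch. Writing $x = \pi^\mu w$ and assuming $\ell(x s_i) = \ell(x) + 1$, the relation $T_{w_1}T_{w_2} = T_{w_1 w_2}$ valid whenever $\ell(w_1 w_2) = \ell(w_1) + \ell(w_2)$ applies with $w_1 = x$ and $w_2 = s_i$ (as $\ell(s_i) = 1$), giving $T_x T_i = T_{x s_i}$ at once. For the length-decreasing branch I would set $y = x s_i$, so that $\ell(y s_i) = \ell(y) + 1$; the previous case gives $T_y T_i = T_{y s_i} = T_x$, and multiplying on the right by $T_i^{-1}$ (which exists by the quadratic relation $(T_i + 1)(T_i - v^2) = 1$) yields $T_{x s_i} = T_x T_i^{-1}$. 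This establishes both branches.

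The conceptual point of this section, however, is to reconcile the Proposition with the generalized Theorem~\ref{thm:generalized-iwahori-matsumoto-formula}. From that angle I would instead obtain the statement by specialization: the generalized formula already expresses $T_{\pi^\mu w s_i} = T_{\pi^\mu w} T_i^{\pm 1}$ with the sign governed by the sign of $\langle \mu, w(\alpha_i) \rangle$, ties broken by the sign of $w(\alpha_i)$. All that then remains is to check that this dichotomy matches the Coxeter-length dichotomy, i.e. that
\[
\ell(\pi^\mu w s_i) = \ell(\pi^\mu w) + 1 \iff \langle \mu, w(\alpha_i) \rangle > 0 \ \text{or}\ \bigl(\langle \mu, w(\alpha_i) \rangle = 0 \ \text{and}\ w(\alpha_i) > 0\bigr).
\]
I would verify this via the standard criterion that right multiplication by a Coxeter generator raises length exactly when the element sends the corresponding affine simple root to a positive affine root, computing the action of $\pi^\mu w$ on that root through $\pi^\mu w(\beta^\vee + n\pi) = w(\beta^\vee) + (n + \langle \mu, w(\beta^\vee) \rangle)\pi$.

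The main obstacle is precisely this dichotomy match, and inside it the bookkeeping for the affine reflection $s_0$, whose affine simple root carries a nonzero imaginary ($\delta$) component; the equality cases, tie-broken by the sign of $w(\alpha_i)$, are exactly what control the level-zero boundary. This is the same combinatorial phenomenon recorded by the small-length term in Lemma~\ref{lem:length-recursion-right}, which is the real reason the two length functions share their up/down behavior here. I expect no difficulty beyond this routine affine-Weyl-group computation, which is why in the finite-type case the result may simply be cited from \cite{im}.
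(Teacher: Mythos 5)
The paper offers no proof of this Proposition at all: it is recalled verbatim from \cite{im}*{Corollary 3.6}, so any argument you supply is by construction a different route, and both of yours are essentially sound, each with a caveat. Your first route --- deducing both branches from the Coxeter--Hecke relations $T_{w_1}T_{w_2}=T_{w_1w_2}$ when lengths add, plus invertibility of $T_i$ from the quadratic relation --- is circular as a from-scratch proof: the assertion that the double-coset basis of $\cH(G,I)$ satisfies exactly those relations, i.e.\ that $T_x\mapsto T_x$ identifies $\cH(G,I)$ with $\cH_{\Waff}$, \emph{is} the Iwahori--Matsumoto theorem, so you cannot invoke that identification to establish one of its defining relations (you do hedge correctly by saying the result may simply be cited). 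Your second route is the genuinely valuable one and it works: Theorem~\ref{thm:generalized-iwahori-matsumoto-formula} applies in finite type, and the required dichotomy match, namely that $\ellcox(\pi^\mu w s_i)=\ellcox(\pi^\mu w)+1$ exactly when $\langle\mu,w(\alpha_i)\rangle>0$ or ($\langle\mu,w(\alpha_i)\rangle=0$ and $w(\alpha_i)>0$), is the standard criterion that right multiplication by a generator raises length iff the element sends the corresponding simple root to a positive root, applied to $\alpha_i^\vee+0\cdot\pi$ via $\pi^\mu w(\alpha_i^\vee)=w(\alpha_i^\vee)+\langle\mu,w(\alpha_i^\vee)\rangle\pi$ and the stated positivity convention; this is precisely the compatibility the paper itself uses in the opposite direction when it deduces $\ell_1=\ellcox$ from the classical relation. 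One small correction: your concern about the affine generator $s_0$ and its imaginary component is moot for the statement as written, since here $\bG$ is finite-type, $i$ ranges over the nodes of its (finite) Dynkin diagram and $ws_i\in W$; the affine direction enters only through the translation $\pi^\mu$, which your root computation already handles.
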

We also have the following well-known facts.
\begin{Proposition}
Let $\lambda$ be a dominant coweight. Then we have 
\begin{align}
  \ellcox(\pi^\lambda) = \langle 2\rho^\vee,\lambda\rangle
\end{align}
In addition, for $w \in W$, we have 
\begin{align}
  \ellcox(\pi^{w(\lambda)}) = \ellcox(\pi^\lambda)
\end{align}
\end{Proposition}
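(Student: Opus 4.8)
The plan is to reduce both identities to the standard combinatorial formula for the length of a translation element in the affine Weyl group, namely
\begin{align}
  \ellcox(\pi^\lambda) = \sum_{\alpha^\vee \in \Delta_+} \left| \langle \lambda, \alpha^\vee \rangle \right|,
\end{align}
valid for every $\lambda$ in the coweight lattice $P$ (here $\Delta_+$ denotes the positive roots of the \emph{finite} root system of $\bG$, all of which are real since $\bG$ is finite-type). First I would establish this formula. The cleanest route is the alcove picture: $\ellcox(\pi^\lambda)$ equals the number of affine root hyperplanes $H_{\alpha^\vee,k} = \{ x : \langle x, \alpha^\vee \rangle = k \}$ separating the fundamental alcove from its translate by $\lambda$, and for each fixed $\alpha^\vee \in \Delta_+$ exactly $|\langle \lambda, \alpha^\vee \rangle|$ of the hyperplanes parallel to $H_{\alpha^\vee,0}$ contribute. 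Alternatively one can argue by induction on $\ellcox$ using the ordinary Iwahori--Matsumoto recursion \eqref{eqn:ordinary-iwahori-matsumoto-formula} together with minimal-length coset representatives; as this is classical I would simply cite \cite{im} and proceed.

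Granting the length formula, the first identity is immediate: when $\lambda$ is dominant we have $\langle \lambda, \alpha^\vee \rangle \geq 0$ for all $\alpha^\vee \in \Delta_+$, so the absolute values may be dropped, giving
\begin{align}
  \ellcox(\pi^\lambda) = \sum_{\alpha^\vee \in \Delta_+} \langle \lambda, \alpha^\vee \rangle = \left\langle \lambda, \sum_{\alpha^\vee \in \Delta_+} \alpha^\vee \right\rangle = \langle \lambda, 2\rho^\vee \rangle = \langle 2\rho^\vee, \lambda \rangle.
\end{align}
Here I use the classical identity $\sum_{\alpha^\vee \in \Delta_+} \alpha^\vee = 2\rho^\vee$, which follows from the fact that each simple reflection $s_i$ permutes $\Delta_+ \setminus \{\alpha_i^\vee\}$ and negates $\alpha_i^\vee$; comparing this with the defining action of $s_i$ forces $\langle \alpha_i, \tfrac12 \sum_{\alpha^\vee \in \Delta_+} \alpha^\vee \rangle = 1 = \langle \alpha_i, \rho^\vee \rangle$ for every $i$, and since the simple coroots span $P \otimes \mathbb{Q}$ in the simply-connected finite-type case this determines the half-sum to be $\rho^\vee$.

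For the second identity I would again invoke the length formula, now applied to the possibly non-dominant coweight $w(\lambda)$:
\begin{align}
  \ellcox(\pi^{w(\lambda)}) = \sum_{\alpha^\vee \in \Delta_+} \left| \langle w(\lambda), \alpha^\vee \rangle \right| = \sum_{\alpha^\vee \in \Delta_+} \left| \langle \lambda, w^{-1}(\alpha^\vee) \rangle \right|.
\end{align}
As $\alpha^\vee$ runs over $\Delta_+$, the roots $w^{-1}(\alpha^\vee)$ run over $w^{-1}(\Delta_+)$, and since $w^{-1}$ permutes $\Delta = \Delta_+ \sqcup (-\Delta_+)$, the multiset $\{ \pm w^{-1}(\alpha^\vee) : \alpha^\vee \in \Delta_+ \}$ coincides with $\{ \pm \alpha^\vee : \alpha^\vee \in \Delta_+ \}$. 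Because the summand depends only on $|\langle \lambda, \cdot \rangle|$ and is insensitive to an overall sign, the sum is unchanged and equals $\ellcox(\pi^\lambda)$. The hard part will be Step~1 --- pinning down the translation length formula --- and once that is cited or proved the two displayed identities reduce to routine bookkeeping with the positive roots.
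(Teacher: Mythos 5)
The paper offers no proof of this proposition at all: it is introduced with ``We also have the following well-known facts'' and left as a citation-free classical statement. Your argument is the standard one and is correct: the translation length formula $\ellcox(\pi^\lambda) = \sum_{\alpha^\vee \in \Delta_+} |\langle \lambda, \alpha^\vee\rangle|$ (via counting separating affine hyperplanes, or by induction from the Iwahori--Matsumoto recursion) immediately gives both identities, the first by dropping absolute values for dominant $\lambda$ and the second because $w^{-1}$ permutes $\Delta_+ \sqcup (-\Delta_+)$ while the summand is sign-insensitive. You also correctly handle the one point that needs care in this paper's conventions, namely that $\rho^\vee$ is \emph{defined} as a chosen sum of fundamental weights rather than as the half-sum of positive roots; in the simply-connected finite-type case the simple coroots span $P\otimes\mathbb{Q}$, so the two agree, exactly as you argue. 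No gaps.
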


In addition to this classical story, the methods of section \ref{sec:bruhat-order-section} apply when $\bG$ is finite-type. In particular, by the Braverman-Kazhdan-Patnaik definition of the Bruhat order, we see that the Bruhat order considered in section \ref{sec:bruhat-order-section} agrees with the general Coxeter-group definition of Bruhat order on $\Waff$. As a consequence, the length function $\ell : \Waff \rightarrow \ZZ \oplus \ZZ \varepsilon$ also gives a grading of the Bruhat order. 

\newcommand{\RR}{\mathbb{R}}

Let $t \in \RR$, and let $\ell_t: \Waff \rightarrow \RR$ be the composed map $\Waff \rightarrow \ZZ \oplus \ZZ \varepsilon \rightarrow \RR$, where the first map is $\ell$, and the second map is given by setting $\varepsilon$ equal to $t$. Then looking at the usual Iwahori-Matsumoto relation, we see that $\ell_1=\ellcox$, the usual Coxeter length function on $\Waff$. From this we obtain the following fact.

\begin{Proposition}
  For all $t \in (0,1]$, $\ell_t$ is a grading for the Bruhat order on $\Waff$.
\end{Proposition}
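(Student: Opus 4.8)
The plan is to reduce the grading property to a single check on each generating relation of the Bruhat order, and then to exploit the fact that $\ell_t(x)-\ell_t(y)$ is affine-linear in $t$, using the information we already possess at the two ends $t\to 0^+$ and $t=1$. First I would recall that by Theorem~\ref{thm:the-two-orders-coincide} the Bruhat order on $\Waff$ agrees with the order $\prec$, which is by construction the preorder generated by relations $y \prec x$ with $x = y s_{\beta^\vee + n\pi}$ and $\ell(x) > \ell(y)$ in the lexicographic order on $\ZZ \oplus \ZZ\varepsilon$. Since $\ell_t$ takes values in the totally ordered set $\RR$, to show $\ell_t$ is a grading it suffices to verify $\ell_t(y) < \ell_t(x)$ on every such generating relation: transitivity of $<$ in $\RR$ then propagates the strict inequality along any chain, yielding $\ell_t(x_1) < \ell_t(x_2)$ whenever $x_1 < x_2$.

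Next I would fix a generating relation $y \prec x = y s_{\beta^\vee + n\pi}$ and set $B = \ellbig(x) - \ellbig(y) \in \ZZ$ and $S = \ellsmall(x) - \ellsmall(y) \in \ZZ$, so that $\ell_t(x) - \ell_t(y) = B + tS$. I then extract two constraints. From $\ell(x) > \ell(y)$ in the lexicographic order I get that either $B \geq 1$, or $B = 0$ and $S \geq 1$. From $\ell_1 = \ellcox$, the ordinary Coxeter length function, which grades the Coxeter Bruhat order and increases by a positive integer across any reflection relation $y \prec x$, I get $B + S = \ell_1(x) - \ell_1(y) = \ellcox(x) - \ellcox(y) \geq 1$.

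Finally I would combine these. If $B \geq 1$, the affine-linear function $t \mapsto B + tS$ is positive at $t=0$ (value $B \geq 1$) and at $t=1$ (value $B+S \geq 1$), hence positive on all of $[0,1]$ and in particular on $(0,1]$. If instead $B = 0$ and $S \geq 1$, then $B + tS = tS \geq t > 0$ for every $t \in (0,1]$. In either case $\ell_t(x) > \ell_t(y)$, which finishes the check on generating relations and therefore the proof.

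I expect the only genuinely delicate point to be the case $B = 0$, where the big length is unchanged and the grading rests entirely on the small-length term $tS$; this is precisely why the endpoint $t = 0$ must be excluded, since $\ell_0 = \ellbig$ itself fails to be a grading. The affine-linear viewpoint is what makes the role of the interval $(0,1]$ transparent: positivity at $t=1$ comes from $\ell_1 = \ellcox$, positivity as $t \to 0^+$ comes from the lexicographic comparison built into the definition of $\prec$, and linearity interpolates between the two.
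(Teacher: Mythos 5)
Your proof is correct and follows essentially the same route as the paper: both reduce to checking the generating reflection relations, then combine the lexicographic inequality $\ell(x)>\ell(y)$ (controlling the behaviour as $t\to 0^+$) with the Coxeter inequality $\ell_1(x)>\ell_1(y)$ (the endpoint $t=1$) via a two-case analysis. Your packaging of this as positivity of the affine-linear function $t\mapsto B+tS$ at both endpoints is a slightly cleaner way of organizing the same case split the paper performs on the sign of the small-length increment.
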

\begin{proof}
Suppose $x \in \Waff$, $r \in \Waff$ is a reflection corresponding to real affine root, and that 
\begin{align}
  \label{eq:11}
 \ell_1(xr) > \ell_1(x) 
\end{align}
By the results of section \ref{sec:bruhat-order-section}, we also have that 
\begin{align}
  \label{eq:12}
  \ell(xr) > \ell(x)
\end{align}
Then we need to prove that  $\ell_t(xr) > \ell_t(x)$ for all $t \in (0,1]$. 
By \eqref{eq:11}, we have 
\begin{align}
  \label{eq:13}
  \ellbig(xr) - \ellbig(x) > \ellsmall(xr) - \ellsmall(x)
\end{align}

There are two cases.
\begin{itemize}
\item  $\ellsmall(xr) - \ellsmall(x) > 0$:

In this case, we have   $ \ellbig(xr) - \ellbig(x) > \ellsmall(xr) - \ellsmall(x) \geq t\cdot (\ellsmall(xr) - \ellsmall(x))$ for all $t \in (0,1]$, which is equivalent to our desired result.
\item $\ellsmall(xr) - \ellsmall(x) \leq 0$:

In this case, by \eqref{eq:12}, we have $ \ellbig(xr) - \ellbig(x) > 0$, from which we have $ \ellbig(xr) - \ellbig(x) > 0 \geq t\cdot (\ellsmall(xr) - \ellsmall(x))$ for all $t \in (0,1]$.
\end{itemize}
\end{proof}

We can ask how much of this carries through when $\bG$ is infinite-type.
\begin{Question}
Let $\bG$ be an infinite-type Kac-Moody group. Is $\ell_1$ a grading for the Bruhat order on $\cW_\cT$? 
\end{Question}
\noindent It would not be very surprising if the answer to this is no. In that case, we can still ask the following weaker question.
\begin{Question}
 Let $\bG$ be an infinite-type Kac-Moody group. Can the Bruhat order on $\cW_\cT$ be graded by $\ZZ$? 
\end{Question}

%\dinakar{Give the coxeter group proof that is of the flavor of your proof?}

\begin{bibsection}
  \begin{biblist}%*{labels={alphabetic}}

\bib{bgr}{article}{
author = {Bardy-Panse, Nicole }, 
author = {Gaussent, St\'ephane },
author = {Rousseau, Guy},
title = {Iwahori-Hecke algebras for Kac-Moody groups over local fields},
year = {2014},
eprint = {arXiv:1412.7503},
}

\bib{bb}{book}{
    author = {Bj{\"o}rner, Anders}, 
    author = {Brenti, Francesco},
     title = {Combinatorics of {C}oxeter groups},
    series = {Graduate Texts in Mathematics},
    volume = {231},
 publisher = {Springer, New York},
      year = {2005},
     pages = {xiv+363},
      isbn = {978-3540-442387; 3-540-44238-3},
   mrclass = {05-01 (05E15 20F55)},
  mrnumber = {2133266 (2006d:05001)},
mrreviewer = {Jian-yi Shi},
}

\bib{bgkp}{article}{
    author = {Braverman, A.},
    author = {Garland, H.},
    author = {Kazhdan, D.}, 
    author = {Patnaik, M.},
     title = {An affine {G}indikin-{K}arpelevich formula},
 booktitle = {Perspectives in representation theory},
    series = {Contemp. Math.},
    volume = {610},
     pages = {43--64},
 publisher = {Amer. Math. Soc., Providence, RI},
      year = {2014},
   mrclass = {22E67 (17B22)},
  mrnumber = {3220625},
       doi = {10.1090/conm/610/12193},
       url = {http://dx.doi.org.myaccess.library.utoronto.ca/10.1090/conm/610/12193},
}

\bib{bk}{article}{
   author={Braverman, Alexander},
   author={Kazhdan, David},
   title={The spherical Hecke algebra for affine Kac-Moody groups I},
   journal={Ann. of Math. (2)},
   volume={174},
   date={2011},
   number={3},
   pages={1603--1642},
   issn={0003-486X},
   review={\MR{2846488}},
   doi={10.4007/annals.2011.174.3.5},
}

\bib{bkp}{article}{
Author = {Braverman, Alexander}, 
Author = {Kazhdan, David}, 
Author = {Patnaik, Manish},
Title = {Iwahori-Hecke algebras for p-adic loop groups},
Year = {2014},
Eprint = {arXiv:1403.0602},
}

\bib{cher}{article}{
   author={Cherednik, Ivan},
   title={Double affine Hecke algebras and Macdonald's conjectures},
   journal={Ann. of Math. (2)},
   volume={141},
   date={1995},
   number={1},
   pages={191--216},
   issn={0003-486X},
   review={\MR{1314036 (96m:33010)}},
   doi={10.2307/2118632},
}

\bib{garland}{article}{
   author={Garland, H.},
   title={A Cartan decomposition for $p$-adic loop groups},
   journal={Math. Ann.},
   volume={302},
   date={1995},
   number={1},
   pages={151--175},
   issn={0025-5831},
   review={\MR{1329451 (96i:22042)}},
   doi={10.1007/BF01444491},
}

\bib{gr}{article}{
   author={Gaussent, S.},
   author={Rousseau, G.}
     TITLE = {Spherical {H}ecke algebras for {K}ac-{M}oody groups over local
              fields},
   JOURNAL = {Ann. of Math. (2)},
  FJOURNAL = {Annals of Mathematics. Second Series},
    VOLUME = {180},
      YEAR = {2014},
    NUMBER = {3},
     PAGES = {1051--1087},
      ISSN = {0003-486X},
   MRCLASS = {20G44 (20C08)},
  MRNUMBER = {3245012},
       DOI = {10.4007/annals.2014.180.3.5},
       URL = {http://dx.doi.org/10.4007/annals.2014.180.3.5},
}%Update to journal citation.

\bib{gg}{article}{
   author={Garland, H.},
   author={Grojnowski, I.}
   title={Affine Hecke Algebras associated to Kac-Moody Groups},
   year={1995}
   journal={arXiv: 9508019},
}

\bib{im}{article}{
   author={Iwahori, N.},
   author={Matsumoto, H.},
   title={On some Bruhat decomposition and the structure of the Hecke rings
   of ${\germ p}$-adic Chevalley groups},
   journal={Inst. Hautes \'Etudes Sci. Publ. Math.},
   number={25},
   date={1965},
   pages={5--48},
   issn={0073-8301},
   review={\MR{0185016 (32 \#2486)}},
}

\bib{kac}{book}{
   author={Kac, Victor G.},
   title={Infinite-dimensional Lie algebras},
   edition={3},
   publisher={Cambridge University Press},
   place={Cambridge},
   date={1990},
   pages={xxii+400},
   isbn={0-521-37215-1},
   isbn={0-521-46693-8},
   review={\MR{1104219 (92k:17038)}},
   doi={10.1017/CBO9780511626234},
}

\bib{kumar}{book}{ 
    AUTHOR = {Kumar, Shrawan},
     TITLE = {Kac-{M}oody groups, their flag varieties and representation
              theory},
    SERIES = {Progress in Mathematics},
    VOLUME = {204},
 PUBLISHER = {Birkh\"auser Boston, Inc., Boston, MA},
      YEAR = {2002},
     PAGES = {xvi+606},
      ISBN = {0-8176-4227-7},
   MRCLASS = {22E46 (14M15 17B67 22E65)},
  MRNUMBER = {1923198 (2003k:22022)},
MRREVIEWER = {Guy Rousseau},
       DOI = {10.1007/978-1-4612-0105-2},
       URL = {http://dx.doi.org/10.1007/978-1-4612-0105-2},
}

\bib{mathieu}{article}{
author = {Mathieu, Olivier},
journal = {Compositio Mathematica},
keywords = {projective normality; generalized Cartan matrix; ind-group scheme; Borel subgroup scheme; Schubert variety; PRV-conjecture; symmetrizable Kac- Moody algebras; Frobenius splitting},
language = {fre},
number = {1},
pages = {37-60},
publisher = {Noordhoff International},
title = {Construction d'un groupe de Kac-Moody et applications},
url = {http://eudml.org/doc/89941},
volume = {69},
year = {1989},
}
\bib{tits}{article}{
   author={Tits, Jacques},
   title={Uniqueness and presentation of Kac-Moody groups over fields},
   journal={J. Algebra},
   volume={105},
   date={1987},
   number={2},
   pages={542--573},
   issn={0021-8693},
   review={\MR{873684 (89b:17020)}},
   doi={10.1016/0021-8693(87)90214-6},
}   
  \end{biblist}
\end{bibsection}
\end{document}